\RequirePackage{fix-cm}
\documentclass{svjour3}                     
\smartqed  
\usepackage{graphicx}
\usepackage{bm}
\usepackage{amscd, amsfonts, amssymb, graphicx, xcolor}
\usepackage{wrapfig}
\usepackage{authblk}
\usepackage{multirow}
\usepackage{amsmath}
\usepackage{mathtools}
\usepackage{graphicx}
\usepackage{algorithm}
\usepackage{algorithmic}
\usepackage{subfig}
\usepackage{comment}
\usepackage{hyperref}

\usepackage{url}
\usepackage{tikz}
\usetikzlibrary{calc,positioning,fit,backgrounds,shapes,arrows.meta,decorations.pathreplacing}
\usetikzlibrary{patterns}
\usetikzlibrary{plotmarks}
\usepackage{comment}
\usepackage[backend=biber,style=alphabetic,sorting=none,locallabelwidth]{biblatex}
\begin{document}

\title{Model order reduction for parametrized variational inequalities: application to crowd motion
}

\titlerunning{Nonlinear ROM for parametrized variational inequalities}        

\author{\underline{Giulia Sambataro} \and Virginie Ehrlacher}


\institute{G. Sambataro \at
             Inria Nancy Grand Est, Université de Strasbourg, France \\
              \email{giulia.sambataro@inria.fr}           
           \and
          V. Ehrlarcher \at
Cermics, École Nationale des Ponts et Chaussées; Inria Paris, France
}

\date{Received: date / Accepted: date}

\maketitle

\begin{abstract}
This work investigates model order reduction for time-dependent parametrized variational inequalities, with a focus on discrete contact problems. As a prototypical example, we consider an agent-based crowd model \cite{maury2011discrete} in which agent velocities are obtained at each time step from a constrained least-squares problem. Geometric parameter variations induce significant variability in both agent positions and contact forces, leading to a slowly decaying Kolmogorov $n$-width of the solution manifold. We propose a nonlinear approach that combines a linear reduced-order model with a deep-learning-based correction. The method utilizes a greedy index selection (gIS) algorithm for compressing Lagrange multipliers and Proper Orthogonal Decomposition (POD) applied to velocity snapshots. Additionally, we explore hyper-reduction techniques, comparing the Empirical Interpolation Method (EIM) and the Empirical Quadrature (EQ) procedure from both computational complexity and accuracy perspectives.
Finally, we demonstrate the applicability of the methodology in a complex scenario involving many agents in a highly congested geometric configuration. This work represents the first attempt to apply model order reduction to a discrete contact problem of the type introduced in
 \cite{maury2011discrete} and paves the way for future advancements in nonlinear MOR specifically for this class of problems.

\keywords{reduced order modeling\and parametrized variational inequalities \and discrete contact models \and reduced basis method \and greedy algorithm}
\subclass{49J40  \and  65L05 \and 74M15 \and 68T07 \and 65P99  }

\end{abstract}

\section{Introduction}
\label{sec:model}
 In many applications, the task which consists in quickly computing an accurate approximation of the solution (and associated quantities of interest) of a mathematical model for a possibly wide range of parameters of various types (e.g. material properties, geometric features, or operating conditions) is of vivid interest. To alleviate the computational burden associated with the evaluation of the model for many values of the parameters, model order reduction (MOR) techniques are used to generate a reduced-order model (ROM) that computes an approximation of the solution of the original model (usually accessible through the resolution of a high-fidelity simulation code using for instance finite elements or finite volume) over a prescribed parameter range at a much lower computational cost.\\
In the present work, we employ model reduction to find the approximated solution of parametrized \emph{variational inequalities}, which arise from discrete contact problems stemming from applications related to crowd motion. Several models have been proposed to reproduce the behavior of a crowd of people in walking situations: for example, to estimate the evacuation time (e.g. in case of an emergency) or to predict areas where the density of the crowd increases; also, to estimate the interaction forces among \emph{agents}.
A large amount of models based on a microscopic description of individuals (see  \cite{helbing1995social,borgers1986city,borgers1986model}) 
or on a macroscopic description (ref. \cite{hughes2000flow,hughes2002continuum,maury2010macroscopic,santambrogio2018crowd}) have been studied over the last decades.
Among the microscopic models, some are based on a stochastic description of the individual behavior (see \cite{henderson1971statistics}), whereas others are purely deterministic (see \cite{maury2011discrete,hoogendoorn2004dynamic,hoogendoorn2004pedestrian}). In this work, we focus our efforts, for the sake of illustration, on the microscopic model described in \cite{maury2011discrete}, which has been proposed to deal with highly congested and emergency situations. 
The numerical discretization of problem \eqref{eq:DCP} with uncertain inputs (such as geometric parameters related to the shape/positions of obstacles) may require the solution of very high-dimensional discrete problems, especially for a large number of agents. More precisely, they require at each time step and for each parameter value the resolution of a high-dimensional quadratic minimization problem with linear constraints, which can be expressed as a variational linear inequality problem. \\
The proposed hybrid MOR relies on the reduced basis (RB) method (ref. 
\cite{hesthaven2016certified, ngoc2005certified, prud2002reliable}). The \emph{high fidelity} (HF) problem (also denoted as full order model (FOM) to distinguish it from the ROM) is projected onto reduced bases that are conveniently constructed from high-fidelity \emph{snapshots}.
For the \emph{a posteriori construction} of the reduced basis, we employ the proper orthogonal decomposition (POD) (see \cite{aubry1991hidden, volkwein2011model,hesthaven2022reduced}) and introduce a novel greedy algorithm (for references on greedy RB, see \cite{prud2002reliable, veroy2003posteriori}) for the compression of 
contact forces. The ROM is then obtained by a Galerkin projection onto the reduced spaces. The RB method has successfully been applied to the resolution of parametrized variational inequalities (ref. \cite{benaceur2020reduced,haasdonk2012reduced,gerner2012certified,newsum2019efficient,niakh2023stable}), mostly issued from mechanical contact problems, by generation of primal and dual reduced spaces.  In several papers (ref. \cite{rozza2007stability,haasdonk2012reduced,haasdonk2012reduced,rozza2013reduced}) and in the present work, the achievement of stability of the reduced problem (which is not guaranteed \emph{a priori}) is discussed; e.g. in \cite{le2023error} and \cite{balajewicz2016projection}, the authors achieve inf-sup stability of the reduced contact problem by a basis enrichment (driven by error indicators). We postpone the task of constructing an efficient and reliable error indicator to a further work. 
For the generation of the dual reduced basis, we propose a greedy algorithm that preserves the non-negativity of the Lagrange multipliers: we compare it with the modified cone projected greedy algorithm proposed in \cite{niakh2023stable} and in \cite{benaceur2020reduced}.  In \cite{balajewicz2016projection}, the construction of the dual basis is obtained by non-negative matrix factorization (NNMF).  In the present work we propose a greedy-based procedure, rather than the NNMF: indeed, we need to order the dual snapshots depending on their relevance to represent the entire set of dual snapshots, while the NNMF would not allow such a hierarchical construction of the dual reduced space.  Concerning the reduction of contacts, we mention also the works in \cite{kollepara2024low, kollepara2024sparse} on dictionary-based approximations and \cite{fauque2018hybrid} where a hybrid ROM for frictionless contact mechanics problems based on the reduced integration domain method is proposed.\\
Linear approximation methods are inherently inadequate for several classes of engineering problems, such as transport-dominated partial differential equations. In particular, as observed in \cite{haasdonk2013convergence}[Example $3.5$], solution fields exhibiting parameter-dependent discontinuities, sharp layers, or discontinuous coefficients cannot be accurately captured by low-dimensional linear expansions. These limitations have motivated the development of \emph{nonlinear approximation methods} (see~\cite{cohen2023nonlinear, barnett2023neural, carlberg2011efficient,franco2023deep,ehrlacher2020nonlinear} just to cite a few) to deal with these problems. The discrete contact problem we consider in this work for crowd motion modeling falls into this second class, as pointed out in section
\ref{sec:RB}. 
In this work, we investigate the potential of a nonlinear model order reduction (MOR) approach to overcome this limitation. Building on the framework proposed in \cite{cohen2023nonlinear}, the method combines a linear reduced basis (RB) approximation with a machine learning (ML)-based correction. The latter is formulated as a nonlinear function of both the system parameters and the leading generalized coordinates. This strategy seeks to enhance accuracy through nonlinear reconstruction, while remaining within the conventional projection-based model reduction paradigm. We also investigated hyper-reduction techniques, comparing the Empirical Interpolation Method (EIM) and the Empirical Quadrature (EQ) procedure from both computational complexity and numerical accuracy perspectives\\
The main contributions of this work are the following: (i) the development of a greedy-based "index selection" algorithm to construct a reduced positive cone for the compression of Lagrange multipliers, while preserving non-negativity constraints; (ii) the adaptation of a ML–based correction for the discrete contact model governing particles positions, in particular for the enriched velocity primal basis. The approach defines a correction term proportional to the discrepancy between the Galerkin generalized coordinates and the projection of the solution. This modification has the aim to stabilize the Galerkin formulation even under insufficient truncation of the basis; (iii) the numerical investigation on complex, high-dimensional and highly congested scenarios with obstacles. We also report a theoretical analysis of the greedy algorithm and the parameter identification in the context of a sphere–plane Hertz contact model.\\
The outline of the paper is the following. In Section~\ref{sec:HF}, we present the high-fidelity discrete contact model for the crowd motion problem that we consider in this work, together with its associated time discretizaton scheme; in Section~\ref{sec:RB} a reduced-basis approach is proposed for the model; also, the (lack of) effectiveness of a linear model reduction is numerically reviewed in Section~\ref{sec:RB}. The proposed nonlinear model reduction approach is described in Section~\ref{sec:ML_correction};
finally, in Section~\ref{sec:num_res}, we numerically assess the validity of the approach for two study cases. Section~\ref{sec:conclusions} wraps up this work and offers future research paths.

\section{High-fidelity crowd motion model}\label{sec:HF}

We present in this section the high-fidelity discrete contact model from~\cite{maury2011discrete} we consider in this work, together with an associated time discretization scheme. For any vector $u\in \mathbb{R}^p$ for some $p\in \mathbb{N}^*$, we denote by $|u|$ the euclidean norm of $u$. 
\medskip

We identify $N^{\rm a} \in \mathbb{N}\setminus\{0\}$ agents by rigid disks of radius $r^{\rm a}>0$, with center $\mathbf{q}_i \in \mathbb{R}^2$ for $i=1, \ldots, N^{\rm a}$. Let us also assume that there are $N^{\rm obst}\in \mathbb{N}$ obstacles in the room, each of them being represented by a convex closed (piecewise regular) subdomain $\Omega_k \subset \mathbb{R}^2$ for $k=1, \ldots, N^{\rm obst}$. For all  $\mathbf{q}  := (q_1, \ldots, q_{N^{\rm a}}) \in (\mathbb{R}^2)^{N^{\rm a}} = \mathbb{R}^{2 N^{\rm a}}$, all $1 \leq i <j \leq N^{\rm a}$ and all $1\leq k \leq N^{\rm obst}$, we denote by 
$$
D_{ij}(\mathbf{q}):=|q_i-q_j|-2r^{\rm a},
$$ 
and by 
$$
D_{ik}(\mathbf{q}) := {\rm dist}(q_i, \Omega_k) = \mathop{\inf}_{\omega_k \in \Omega_k}|q_i - \omega_k|.
$$
We then denote by 
$$
\mathbf{D}(\mathbf{q}):= \left( D_{ij}(\mathbf{q}), D_{ik}((\mathbf{q})\right)_{
1\leq i \leq N^{\rm a},
i <j \leq N^{\rm a},
1\leq k \leq N^{\rm obst}
} \in \mathbb{R}^{N^{\rm cont}}
$$
with $N^{\rm cont}:= \frac{N^{\rm a}(N^{\rm a}-1)}{2} + N^{\rm a}N^{\rm obst}$. In the following, to simplify the notation, we will denote by $\left( D_\ell(\mathbf{q})\right)_{1\leq \ell \leq N^{\rm cont}}$ the coordinates of the vector $\mathbf{D}(\mathbf{q})\in \mathbb{R}^{N_{\rm cont}}$.

To avoid collisions between the different agents of the crowd, or between the agents and the different obstacles inside the room, their positions have to belong to a set of feasible configurations (which naturally describes the positions of the centers of non overlapping disks) 
\begin{equation}
    \mathcal{Q}=\{\mathbf{q} \in \mathbb{R}^{2 N^{\rm a}} : \;  D_{\ell}(\mathbf{q}) \geq 0, \, \forall 1 \leq \ell \leq N^{\rm cont}\}.
    \label{eq:pos_feasibility}
\end{equation}

\medskip

Analogously, we introduce a feasibility set for the velocities which is a closed convex cone depending on the set of admissible positions $\mathbf{q} \in \mathcal{Q}$ defined as follows:
 \begin{equation}
     \mathcal{C}_{\mathbf{q}}:=\left\{\mathbf{v} \in \mathbb{R}^{2N^{\rm a}}: \;  \forall 1\leq \ell \leq N^{\rm cont}, \quad \left(D_{\ell}(\mathbf{q})=0 \implies G_{\ell}(\mathbf{q}) \cdot \mathbf{v} \geq 0\right)\right\},
     \label{eq:vel_cone}
 \end{equation}
where for all $1\leq \ell \leq N^{\rm cont}$ and all $\mathbf{q} \in  \mathbb{R}^{2 N^{\rm a}}$, $G_{\ell}(\mathbf{q})=\nabla D_{\ell}(\mathbf{q})\in \mathbb{R}^{2N^{\rm a}}$. The matrix $\mathbf{G}(\mathbf{q}) := (G_1(\mathbf{q}) , \ldots , G_{N^{\rm cont}}(\mathbf{q}))^T \in \mathbb{R}^{N^{\rm cont} \times 2N^{\rm a}}$ is the Jacobian matrix of the vector-valued function $D$ evaluated at $\mathbf{q}$. \\
It is assumed in the model that, for a given set of admissible positions $\mathbf{q} \in \mathcal Q$, the agents have some \emph{spontaneous velocities} which correspond to the velocity they would have had if in the absence of other agents or obstacles. The collection of these spontaneous velocities is denoted by $\boldsymbol{\upsilon}(\mathbf{q})\in \mathbb{R}^{2N^{\rm a}}$. In most models, in particularly those presented in~\cite{maury2011discrete}, the velocity field $\boldsymbol{\upsilon}$ typically depends on the the geodesic distance between each agent and the exit.

\bigskip

 The high fidelity discrete contact problem (DCP) is then formulated such that the vector of velocities of the agents at some time $t>0$ is given as the solution of a constrained minimization problem. More precisely, assuming that at time $t>0$ the agents are located at positions $\mathbf{q}\in \mathcal Q$, the actual velocity field is found as the closest feasible velocity field in $\mathcal C_{\mathbf{q}}$ to the spontaneous velocity field $\boldsymbol{\upsilon}(\mathbf{q})$ in a least-square sense. This leads to the following ODE system, which models the evolution in time of the positions of the agents in the system:
 \begin{equation}
		\begin{cases}
			&\frac{d\mathbf{q}}{dt}(t)=\texttt{P}_{\mathcal{C}_{\mathbf{q(t)}}}(\boldsymbol{\upsilon}(\mathbf{q}(t))), \quad t>0,\\
			&\mathbf{q}(0)=\mathbf{q}_0 \in \mathcal{Q},
		\end{cases}
  \label{eq:DCP}
	\end{equation}
where for all closed convex set $\mathcal C$ of $\mathbb{R}^{2N^{\rm a}}$, $\texttt{P}_{\mathcal{C}}$ denotes the euclidean projection onto $\mathcal{C}$. We point out that i) the constraints in $\mathcal{C}_{\mathbf{q}}$ can be associated with the non-overlapping condition among agents and, in the very same way, with the non-overlapping condition between agents and obstacles; ii) the closed convex cone $\mathcal{C}_{\mathbf{q}}$ does not continuously depend on $\mathbf{q}$; iii) the definition of the model \eqref{eq:DCP} ensures that for all $t>0$, $\mathbf{q}(t)$ belongs to $\mathcal Q$.\\

\subsection{Time-discretization of the high-fidelity model}
\label{sec:FOM}
We briefly present in this section the numerical time discretization scheme used for the practical computation of the solution of problem \eqref{eq:DCP} (we refer to \cite{maury2011discrete} for a more detailed description).
We consider a finite time interval denoted by $[0,T]$ for some final time $T>0$ and a constant time step denoted by $h:=T/N^T$ for some $N^T \in \mathbb{N}^{\star}$. For any $0\leq \nu \leq N^T$, the $\nu^{\rm th}$ computational time is denoted by $t^{\nu}:=\nu h$. 

\medskip

Let $1\leq \nu \leq N^T$. We denote by $\mathbf{q}^{\nu-1}\in \mathbb{R}^{2N^{\rm a}}$ the approximation of $\mathbf{q}(t^{\nu-1})$ given by the time discretization scheme.
The velocity $\mathbf{u}^{\nu}\in \mathbb{R}^{2N^{\rm a}}$ is found by solving the following projection-based problem
\begin{subequations}
\begin{equation}
\mathbf{u}^{\nu}=\texttt{P}_{\mathcal{C}^{h}_{\mathbf{q}^{\nu-1}}}(\boldsymbol{\upsilon}(\mathbf{q}^{\nu-1})),
    \label{eq:discr_vel_DCP}
\end{equation}
where for all $\mathbf{q}\in \mathcal Q$, the set $\mathcal{C}^{h}_{\mathbf{q}}$ is a discretized set of feasible velocities
\begin{equation}
\label{eq:discr_vel_cone}
    \mathcal{C}^{h}_{\mathbf{q}}=\{\mathbf{u}\in \mathbb{R}^{2N^{\rm a}}: \;   D_{\ell}(\mathbf{q})+h G_{\ell}(\mathbf{q})\cdot \mathbf{u} \geq 0, \, \forall 1\leq \ell \leq N^{\rm cont}\}.
\end{equation}
where the expression in \eqref{eq:discr_vel_cone} stems from the following first order expansion in time of the constraints: 
\begin{equation}
  \forall 1\leq \ell \leq N^{\rm cont}, \quad   D_{\ell}(\mathbf{q}^{\nu-1}+h \mathbf{u}^{\nu})=D_{\ell}(\mathbf{q}^{\nu-1})+h \mathbf{u}^{\nu} \cdot \nabla D_{\ell}(\mathbf{q}^{\nu-1}) +\mathcal{O}(h^2).
    \label{eq:D_1st_develop}
\end{equation}
Once the velocity $\mathbf{u}^{\nu}$ is known, the next position configuration is obtained: 
\begin{equation}
 \mathbf{q}^{\nu}=\mathbf{q}^{\nu-1}+h\mathbf{u}^{\nu}.
 \label{eq:discr_pos_update}
\end{equation}
\end{subequations}
Following \cite{maury2011discrete}, we solve the projection problem in \eqref{eq:discr_vel_DCP} by Uzawa algorithm. Indeed, any problem of the form  \eqref{eq:discr_vel_DCP} can be recast as a minimization problem of the following form: find $\mathbf{u}\in \mathbb{R}^{2N^{\rm a}}$ solution to
\begin{equation}
\mathbf{u}= \texttt{P}_{\mathcal C_{\mathbf{q}}^h}(\boldsymbol{\upsilon}_{\mathbf{q}}) = \underset{\mathbf{v}\in \mathcal{C}^h_{\mathbf{q}}}{\text{argmin}}\;{|\mathbf{v}-\boldsymbol{\upsilon}_{\mathbf{q}}|^2},
\label{eq:primal_eq}
\end{equation}
for some $\mathbf{q}\in \mathcal Q$ and $\boldsymbol{\upsilon}_{\mathbf{q}}:= \boldsymbol{\upsilon}(\mathbf{q})\in \mathbb{R}^{2N^{\rm a}}$. Problem \eqref{eq:primal_eq} can be equivalently expressed as follows: find $(\mathbf{u}, \boldsymbol{\lambda})\in \mathbb{R}^{2N^{\rm a}} \times \mathbb{R}_+^{N^{\rm cont}}$ solution to 
\begin{equation}
\left\{
\begin{array}{l}
\mathbf{u} = \boldsymbol{\upsilon}_{\mathbf{q}} - \mathbf{B}_{\mathbf{q}}^T \boldsymbol{\lambda},\\
\boldsymbol{\lambda} \odot (\mathbf{B}_{\mathbf{q}}\mathbf{u} - \mathbf{d}_{\mathbf{q}}) = 0,\\
\mathbf{B}_{\mathbf{q}}\mathbf{u} - \mathbf{d}_{\mathbf{q}} \leq 0, \\
\end{array}
\right.
\label{eq:DCP}
\end{equation}
where $\mathbf{B}_{\mathbf{q}}:=-h\mathbf{G}(\mathbf{q}) \in \mathbb{R}^{N^{\rm cont}\times 2N^{\rm a}}$, 
 $\mathbf{d}_{\mathbf{q}}:= \mathbf{D}(\mathbf{q})\in \mathbb{R}^{N^{\rm cont}}$ and for all $\mathbf{v}:= (v_{\ell})_{1\leq \ell \leq N^{\rm cont}}, \mathbf{w}:= (w_{\ell})_{1\leq \ell \leq N^{\rm cont}} \in \mathbb{R}^{N^{\rm cont}}$, $\mathbf{v} \odot \mathbf{w} := (v_\ell w_\ell)_{1\leq \ell \leq N^{\rm cont}} \in \mathbb{R}^{N^{\rm cont}}$.

The vector $\boldsymbol{\lambda}$ is called the Lagrange multiplier associated to the optimization problem~\eqref{eq:primal_eq}.

\medskip

The Uzawa algorithm~\cite{kepleruzawa} for the resolution of \eqref{eq:primal_eq} is an iterative algorithm that produces two sequences $(\mathbf{u}_k)_{k\geq 0} \subset \mathbb{R}^{2N^{\rm a}}$ and $(\boldsymbol{\lambda}_k)_{k\geq 0}
\subset \mathbb{R}_+^{
N^{\rm cont}}$ that solve the following scheme for $k=0,1,\ldots, $ until convergence:
\begin{subequations}
\begin{align}
	& \boldsymbol{\lambda}_0=0,\\
	&\mathbf{u}_{k+1}=\boldsymbol{\upsilon}_{\mathbf{q}}-\mathbf{B}_{\mathbf{q}}^T\boldsymbol{\lambda}_k
 \label{eq:Uzawa_u},\\
	&\boldsymbol{\lambda}_{k+1}=\texttt{P}_{+}
	\left(\boldsymbol{\lambda}_{k}+ \rho (\mathbf{B}_{\mathbf{q}} \mathbf{u}_{k+1}-\mathbf{d}_{\mathbf{q}})
	\right),
 \label{eq:Uzawa_lambda}
\end{align}
\label{eq:discr_Uzawa_system}
\end{subequations}
where $\rho>0$ is a fixed parameter, and for all $\mathbf{w}:= (w_{\ell})_{1\leq \ell \leq N^{\rm cont}} \in \mathbb{R}^{N^{\rm cont}}$, $\texttt{P}_+ (\mathbf{w}) :=\left(\max(w_\ell,0)\right)_{1\leq \ell \leq N^{\rm cont}}\in \mathbb{R}^{N^{\rm cont}}$.

\medskip

The algorithm in \eqref{eq:discr_Uzawa_system} can be shown to converge as soon as $0<\rho<\frac{2}{\|B_{\mathbf{q}}\|_2^2}$ (\cite{ciarlet1982introduction}): the sequence $(\mathbf{u}_{k})_{k\geq 0}$  converges to $\mathbf{u}$ and it can be shown that the sequence $(\boldsymbol{\lambda}_k)_{k\geq 0}$ tends to some $\boldsymbol{\lambda}\in \mathbb{R}_{+}^{N^{\rm cont}}$ such that $(\mathbf{u}, \boldsymbol{\lambda})$ is a solution to~\eqref{eq:discr_Uzawa_system}.\\

	We invite the reader to observe that algorithm \eqref{eq:discr_Uzawa_system} can be reformulated in terms of the Lagrange multipliers and recast to the equivalent version called Fixed-step Projected Gradient Descent (PGD) (cf. \cite[Algorithm $1$]{bloch2023convex}); furthermore, \cite{bloch2023convex}, Algorithm $2$ presents an accelerated version of the algorithm by means of Nesterov’ optimized step. A more involved analysis of different possible algorithms to solve \eqref{eq:DCP} can be found in \cite{bloch2023convex} and is beyond the scope of this work.\\
	We also refer to \cite{faure2015crowd} for the formulation of the crowd motion model without friction as a generalized gradient flow problem. In this framework, the flow function is generally non-convex due to the non-convexity of the feasible set of positions $\mathcal{Q}$. The possibility that the algorithm becomes trapped in a local minimum—depending on the geometric configuration of the domain and the particle ratio—is mathematically accounted for in \cite{faure2015crowd}.

\section{A linear reduced order model for the DCP}
\label{sec:RB}
The aim of this section is to present the linear reduced-order model we consider for problem \eqref{eq:DCP}: it relies on a Reduced Basis paradigm and reads a Galerkin projection-based MOR for the contact problem described in \eqref{eq:DCP}.
\subsection{Construction of the reduced bases}
\label{sec:RB_constr}

Let us now denote by $\mathcal N:= 2N^{\rm a}$, $\mathcal R:= N^{\rm cont}$, $\mathcal{V}:= \mathbb{R}^{\mathcal N}$, $\mathcal W:= \mathbb{R}^{\mathcal R}$ and $\mathcal W^+:= \mathbb{R}_+^{\mathcal R}$. We also denote by $\|\cdot\|_{\mathcal V}$ and by $\|\cdot\|_{\mathcal W}$ the euclidean norm of $\mathcal V$ and $\mathcal W$ respectively.

\medskip

Let $\mathcal P \subset \mathbb{R}^p$ be a set of parameters the discrete contact model \eqref{eq:discr_Uzawa_system} may depend on (typically describing the geometry of the obstacles for instance). For any $\mu \in \mathcal P$, the high-fidelity scheme will produce a time-discrete set of solutions $$\left(\mathbf{u}^{\nu}(\mu), \boldsymbol{\lambda}^{\nu}(\mu), \mathbf{q}^\nu(\mu)\right)_{1\leq \nu \leq N^T}.$$
In the rest of the paper, we will also use the following notation, when convenient, $\mathbf{u}(\nu, \mu):=\mathbf{u}^{\nu}(\mu)$,  $\boldsymbol{\lambda}(\nu, \mu):=\boldsymbol{\lambda}^{\nu}(\mu)$ and $\mathbf{q}(\mu,\nu):= \mathbf{q}^\nu(\mu)$ for all $0\leq \nu \leq N^T$ and $\mu \in \mathcal P$. Let us also denote by $\mathcal S:= \{0, \ldots, N^T\} \times \mathcal P$. 

\medskip

We define the velocity solution set as $\mathcal{M}^u:=\{\mathbf{u}(\nu,\mu):\;  (\nu, \mu)\in \mathcal S \}\subset \mathcal V$, the position solution set $\mathcal M^q:= \{\mathbf{q}(\nu,\mu):\;  (\nu, \mu)\in \mathcal S \}\subset \mathcal V$ and the Lagrange multipliers set $\mathcal{M}^{\lambda}:=\{\boldsymbol{\lambda}(\nu,\mu): (\nu, \mu)\in \mathcal S\} \subset \mathcal W^+$.

\medskip

The approximation of the primal solutions is seeked in a reduced subspace $\hat{\mathcal{V}}_N \subset \mathcal V$ so that $\hat{\mathcal{V}}_N$ is the vector space spanned by an orthogonal family of vectors $\{\boldsymbol{\varphi}_n\}_{n=1}^N\subset \mathcal V$ for some $N \leq \mathcal N$. Similarly, the approximation of the dual solutions is seeked in a reduced subcone $\hat{\mathcal{W}}_R^+ \subset \mathcal W^+$ so that $\hat{\mathcal{W}}_R^+$ is the non-negative cone spanned by a given family of vectors $\{\boldsymbol{\psi}_r\}_{r=1}^R\subset \mathcal W^+$ for some $R \leq \mathcal R$. The precise choice of $N$, $R$, $\{\boldsymbol{\varphi}_n\}_{n=1}^N$ and $\{\boldsymbol{\psi}_r\}_{r=1}^R$ will be detailed in the next sections. 
The reduced solutions will then be written for all $0\leq \nu \leq N^T$ as 
\begin{equation*}
\displaystyle \hat{\mathbf{u}}^{N,R}(\nu, \mu) = \sum_{n=1}^N \alpha^{N,R}_n(\nu, \mu) \boldsymbol{\varphi}_n, \qquad
\displaystyle \hat{\boldsymbol{\lambda}}^{N,R}(\nu, \mu) = \sum_{r=1}^R \beta^{N,R}_r(\nu, \mu) \boldsymbol{\psi}_r, 
\text{with}
\end{equation*}
$\boldsymbol{\alpha}^{N,R}(\nu, \mu) = \left(\alpha^{N,R}_n(\nu,\mu)\right)_{1\leq n \leq N} \in \mathbb{R}^N$, $\boldsymbol{\beta}^{N,R}(\nu,\mu) = \left(\beta^{N,R}_r(\nu, \mu)\right)_{1\leq r \leq R} \in \mathbb{R}_+^R$
the generalized coordinates of the reduced displacements and Lagrange multipliers solutions. In the following, we will denote by $\hat{V}_N \in \mathbb{R}^{\mathcal N\times N}$ the matrix composed of the coordinates of $\{\boldsymbol{\varphi}_n\}_{n=1}^N$ and by $\hat{W}_R^+\in \mathbb{R}_+^{\mathcal R\times R}$ the matrix composed of the coordinates of $\{\boldsymbol{\psi}_r\}_{r=1}^R$.
 
\medskip

To construct the reduced space and reduced cone, we assume that we are given a dataset of solutions $(\mathbf{u}(\sigma), \boldsymbol{\lambda}(\sigma))$ computed as solutions of the Full Order model \eqref{eq:DCP} for $\sigma := (\nu,\mu) \in \mathcal S_{\rm train}:= \{0, 1, \ldots, N^T\}\times \mathcal P_{\rm train}$, 
where $\mathcal P_{\rm train}$ is a finite training subset of $\mathcal P$. 

\subsubsection{The primal reduced space $\hat{\mathcal{V}}_N$ }
\label{sec:pod}

The family $\{\boldsymbol{\varphi}_n\}_{n=1}^N$ is computed in a standard way as the first $N$ POD modes (corresponding to the $N$ largest singular values) of the family of velocity snapshots $\{\mathbf{u}(\sigma)\}_{\sigma \in \mathcal S_{\rm train}}$. The reduced subspace $\hat{\mathcal{V}}_N $ is then equal to ${\rm Span}\{ \boldsymbol{\varphi}_1, \ldots, \boldsymbol{\varphi}_N\}$. The value of $N\in \mathbb{N}^*$ is chosen so that the corresponding POD (relative) error is below an error threshold $1>\epsilon>0$.

\medskip

For the sake of illustration, we show in the following figures the behaviour of the POD for the parametric DCP problem detailed in Section~\ref{sec:num_res}. We consider a family of velocity snapshots $\{\mathbf{u}(\sigma)\}_{\sigma \in \mathcal S_{\rm train}}$ where $\mathcal S_{\rm train}$ is chosen so that  $|\mathcal S_{\rm train}|= 8000$ (for more details see Section~\ref{sec:num_res}).

Let us denote by $\ell_1 \geq \ell_2 \geq \ldots$ the eigenvalues associated to the POD decomposition ranged in non-increasing order. We show in Figure~\ref{fig:eigPOD_u}\subref{fig:pod_u} the decay of $\frac{\ell_k}{\ell_1}$ as a function of the index $k$, and in Figure~\ref{fig:eigPOD_u}\subref{fig:energy_i} the associated relative squared error $E_k:= \frac{\sum_{j\geq k+1} \ell_j}{\sum_{j\geq 1} \ell_j}$.


\begin{figure}[h!]
\centering
\subfloat[Decay of POD eigenvalues.]{
  \includegraphics[scale=0.32]{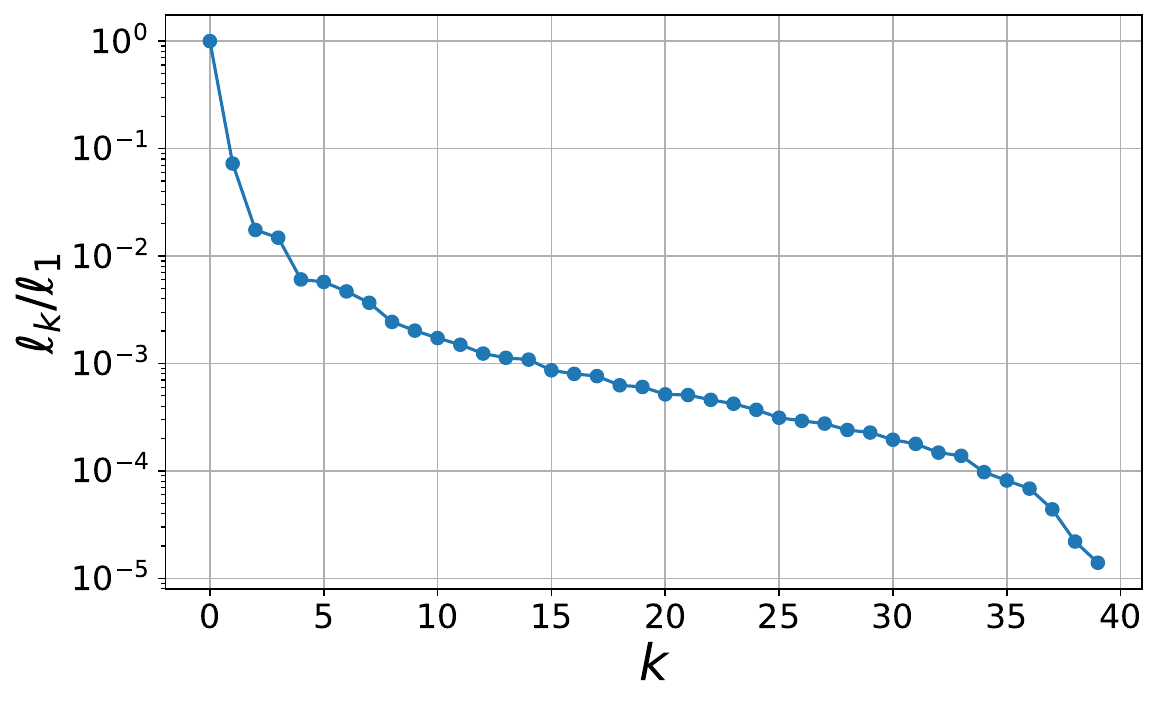}
    \label{fig:pod_u}
    }
\subfloat[Relative squared error of the truncated POD with $k$ modes.]{
	  \includegraphics[scale=0.32]{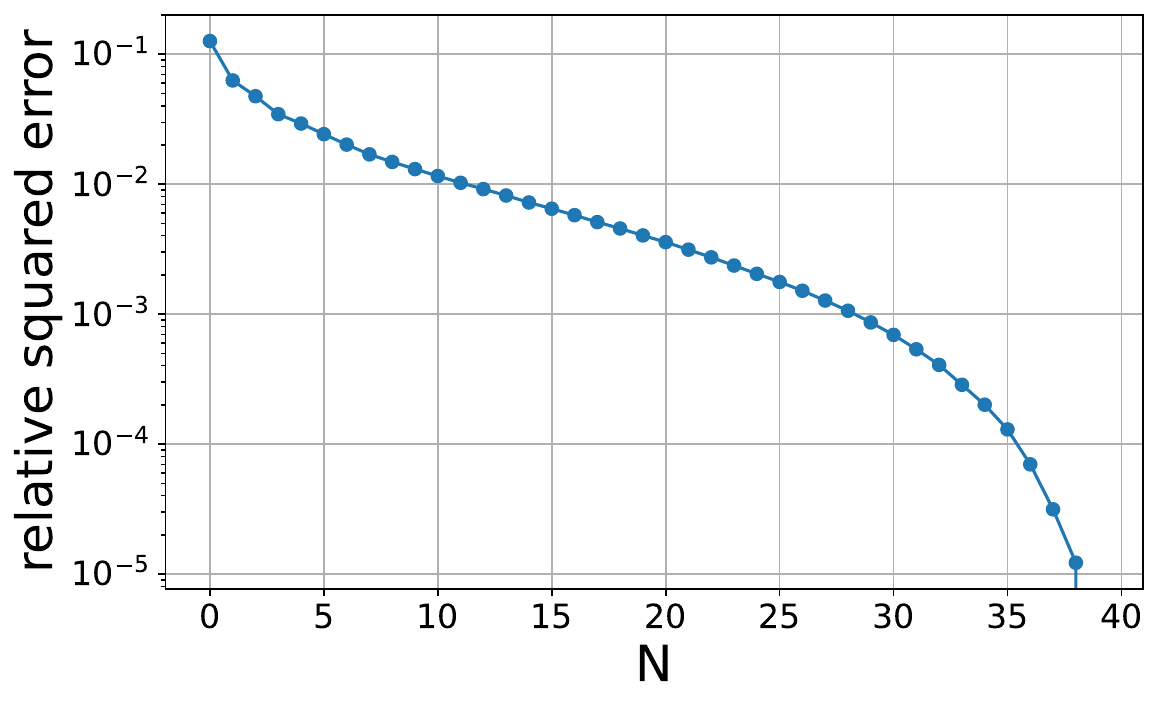}
    \label{fig:energy_i}
    }   
\caption{POD on the velocity snapshots $\{\mathbf{u}(\sigma)\}_{\sigma \in \mathcal S_{\rm train}}$.}
\label{fig:eigPOD_u}
\end{figure}

We can see in particular that the relative squared truncated POD error decays slowly with respect to the number of POD modes. This slow decay illustrates the fact that the so-called Kolmogorov width of the solution set $\mathcal M^u$ decays at a slow rate, which makes standard linear reduced-order models not well suited in our present context. This in particular motivates the use of hybrid and nonlinear model order reduction methods for problem \eqref{eq:DCP}, which is the object of the second part of this work. We postpone the description of the non-linear approximation approach we propose in this work to Section~\ref{sec:ML_correction}. \\
	We refer the interested reader to \cite{bellomo2008modelling} for a detailed analysis of crowd dynamics modeled as macroscopic first-order systems of conservation laws. In particular, the authors focus on (i) the conservation of mass, representing the preservation of pedestrian density, and (ii) the equilibrium of linear momentum, which governs the evolution of crowd velocity.
	The slow decay  of the Kolmogorov $n$-width with respect to the dimension of a linear reduced subspace is a well-known phenomenon in the approximation theory of PDEs. This behavior, as observed in Figure \ref{fig:eigPOD_u}, is supported by the mathematical connection between the crowd motion model \eqref{eq:DCP} and hyperbolic PDEs, as discussed in \cite{bellomo2008modelling}.

	

\subsubsection{The dual reduced cone $\hat{\mathcal{W}}_R^+$}
\label{sec:dual_basis}

In this work, we investigate two possible strategies to construct the dual reduced cone $\hat{\mathcal{W}}_R^+$ which are detailed below. The first strategy is a cone projected greedy algorithm which has been originally proposed in~\cite{niakh2023stable}. The second approach, which will be the one we will adopt in this work \itshape in fine \normalfont is based on a greedy index selection algorithm detailed below. In particular, both approaches are tailored in order to guarantee the fact that the vectors $\{\boldsymbol{\psi}_r\}_{r=1}^R$ belong to $\mathcal W^+$.

\paragraph{Cone projected greedy algorithm}
We implement the so-called modified cone projected greedy (mCPG) algorithm (\cite{niakh2023stable}) to find the vectors $\{\boldsymbol{\psi}_r\}_{r=1}^R$.

The procedure, described in Algorithm \ref{alg:mCPG}, takes as input the solution snapshots $\{\boldsymbol{\lambda}(\sigma)\}_{\sigma\in\mathcal S_{\rm train}}$, a positive tolerance parameter $\delta>0$ and a maximal number of iterations $R_{\rm max}\in \mathbb{N}^*$ as stopping criteria. As outputs, it produces a set of $R \in \mathbb{N}^*$ vectors such that the maximum projection error of the snapshot $\boldsymbol{\lambda}(\sigma)$ in $\mathcal{\hat{W}}_R^+=\text{Span}^+\{\{\boldsymbol{\psi}_r\}_{r\in \{1:R \}}\}$ over $\sigma \in \mathcal S_{\rm train}$ is below the requested threshold, i.e. such that:
\begin{equation}
\label{eq:mCPG_err}
    e_R =\frac{\displaystyle{\max_{\sigma \in\mathcal S_{\rm train}}} \left\|\left(\mathbb{I}-\texttt{P}_{\hat{\mathcal{W}}_R^+}\right)\boldsymbol{\lambda}(\sigma)\right\|_{\mathcal{W}}}{\displaystyle{\max_{\sigma \in\mathcal S_{\rm train}}}\left\|\boldsymbol{\lambda}(\sigma)\right\|_{\mathcal{W}}}\leq \delta, 
\end{equation}
where $\mathbb{I}$ denotes the identity map of $\mathcal W$.

At line \ref{alg_line:proj}, the mode $r+1$ is found by projecting the training snapshots onto the positive cone constructed by means of the previously computed $r$ modes; the projection error \eqref{eq:mCPG_err} of the selected snapshot $\boldsymbol{\lambda}(\sigma_{r+1})$ in the updated positive cone $\hat{\mathcal{W}}_R^+$ is performed at line \ref{alg_line:proj_err}.
\begin{algorithm}[h]
\caption{Modified cone projected greedy (mCPG)}\label{alg:mCPG}
  \begin{algorithmic}[1]
 \STATE{\textbf{Inputs}:$\{\boldsymbol{\lambda}(\sigma)\}_{\sigma\in\mathcal S_{\rm train}}$, $\delta>0$, $R_{\rm max}\in\mathbb{N}^*$ }
\STATE{\textbf{Outputs}: $R \in \mathbb{N}^*$, $\{\boldsymbol{\psi}_r\}_{r\in \{1:R \}} \subset \mathcal{W}^{+}$}
\STATE{\textbf{Initialization:} $r=0$, $I_0=\emptyset$, $\hat{\mathcal{W}}_0^+=\{0\}$, $e_0=1+\delta$}
\STATE{$\sigma_1\in \arg \max_{\sigma\in \mathcal S_{\rm train}}\left\|\boldsymbol{\lambda}(\sigma)\right\|_{\mathcal{W}}$}
\WHILE{$e_r>\delta$ and $r<R_{\rm max}$} 
	\STATE{$r\leftarrow r+1$}
	\STATE{$I_r=I_{r-1}\cup \{\sigma_r\}$}
	\STATE{
			$\boldsymbol{k}_r^{\star}=\arg \displaystyle{\min_{\begin{array}{c}
			\boldsymbol{k}:=(k_i)_{1\leq i \leq r-1} \in \mathbb{R}_+^{r-1}\\
			\boldsymbol{\lambda}(\sigma_r)-\sum_{i=1}^{r-1}k_i \boldsymbol{\psi}_i\geq 0\\
			\end{array}}} \left\|\boldsymbol{\lambda}(\sigma_r)-\sum_{i=1}^{r-1}k_i\boldsymbol{\psi}_i\right\|_{\mathcal{W}}^2 \, 
				$
   \label{alg_line:basis_func}
   }
   \STATE{$\displaystyle \boldsymbol{\gamma}_r=\sum_{i=1}^{r-1}k_{r,i}^{\star}\boldsymbol{\psi}_i$}
	\STATE{$\boldsymbol{\psi}_r=\frac{\boldsymbol{\lambda}(\sigma_r)-\boldsymbol{\gamma}_r}{\left\|\boldsymbol{\lambda}(\sigma_r)-\boldsymbol{\gamma}_r\right\|_{\mathcal{W}}}$}
	\STATE{$\hat{\mathcal{W}}_r^+= \text{Span}^+\{\boldsymbol{\psi}_1, \ldots, \boldsymbol{\psi}_r\}$}
	\STATE{$\sigma_{r+1}=\arg \displaystyle{\max_{\sigma\in \mathcal S_{\rm train}\setminus I_r}} \, \min_{\boldsymbol{k} = (k_i)_{1\leq i \leq r} \in \mathbb{R}_+^r}\left\|\boldsymbol{\lambda}(\sigma)-\sum_{i=1}^r k_i \boldsymbol{\psi}_i\right\|_{\mathcal{W}}^2 \frac{1}{\left\|\boldsymbol{\lambda}(\sigma_1)\right\|_{\mathcal{W}}}$
 \label{alg_line:proj}
 }
	\STATE{$e_r=\frac{\|\left( \mathbb{I} -\texttt{P}_{\hat{\mathcal{W}}_r^{+}}\right)\boldsymbol{\lambda}(\sigma_{r+1})\|_{\mathcal{W}}}{\|\boldsymbol{\lambda}(\sigma_1)\|_{\mathcal{W}}}$ 
 \label{alg_line:proj_err}}
\ENDWHILE
\STATE{$R=r$.}
\end{algorithmic}
\end{algorithm}

In this work, the implementation of Algorithm \ref{alg:mCPG} is based on the Python convex optimization package \texttt{cvxopt} for quadratic programming~\cite{andersen2020cvxopt}.
\paragraph{Greedy index selection algorithm}
In Algorithm \ref{alg:gIS}, we construct the functions $\{\boldsymbol{\psi}_r\}_{r\in\{1:R\}}$ by selecting (line \ref{alg_line:gIS_biggest_coord}) at each iteration $r$ the largest coordinate (denoted as $i_r$) of the Lagrange multipliers evaluated at the currently selected parameter value $\sigma_r$: this coordinate is used to construct the new selected vector (line \ref{alg_line:gIS_nur}) as the $i_r^{th}$ element of the canonical basis of $\mathbb{R}^{\mathcal R}$. In the rest of the paper, for all $1\leq i \leq \mathcal R$, we denote by $\mathbf{e}_i \in \mathcal W_+$ the $i^{th}$ vector of the canonical basis of $\mathbb{R}^\mathcal{R}$. 
\begin{algorithm}[h]
\caption{Greedy index selection (gIS)}\label{alg:gIS}
\begin{algorithmic}[1]
\STATE{\textbf{Inputs}: $\{\boldsymbol{\lambda}(\sigma)\}_{\sigma \in\mathcal S_{\rm train}}$, $\delta>0$, $R_{\rm max}\in \mathbb{N}^*$}
\STATE{\textbf{Outputs}: $R\in \mathbb{N}^*$, $\{\boldsymbol{\psi}_r\}_{r\in \{1:R \}}\subset \mathcal{W}^{+}$}
\STATE \textbf{Initialisation}: $\sigma_1 \in \arg \max_{\sigma \in \mathcal S_{\rm train}} \|\boldsymbol{\lambda}(\sigma)\|_{\mathcal{W}}$, $r=0$, $I_{0}=\emptyset$,  $e_0=1+\delta$, $\hat{\mathcal{W}}_0^+=\{0\}$
\WHILE{$e_r>\delta$ and $r<R_{\rm max}$}
    \STATE{$ r\leftarrow r+1$ }
	\STATE{$\displaystyle i_r \in \arg \max_{1\leq i\leq \mathcal R, i \notin I_{r-1}} \left(\boldsymbol{\lambda}(\sigma_{r})\right)_i$}
 \label{alg_line:gIS_biggest_coord}
	\STATE{$I_r = I_{r-1} \cup \{i_r\}$} 
	\STATE{$\boldsymbol{\psi}_r=\mathbf{e}_{i_r}$} \label{alg_line:gIS_nur}
	\STATE $\displaystyle \sigma_{r+1}\in {\rm{arg}} \max_{\sigma \in \mathcal S_{\rm train}} \left\|\boldsymbol{\lambda} (\sigma) - \sum_{r'=1}^r \left(\boldsymbol{\lambda} (\sigma)\right)_{i_{r'}} \mathbf{e}_{i_{r'}}\right\|_{\mathcal{W}}$
 \label{alg_line:gIS_max_par}
	\STATE{Compute the error $e_r=\frac{\left\|\boldsymbol{\lambda} (\sigma_{r+1}) - \sum_{r'=1}^r \left(\boldsymbol{\lambda} (\sigma_{r+1})\right)_{i_{r'}} \mathbf{e}_{i_{r'}}\right\|_{\mathcal{W}}}{\|\boldsymbol{\lambda}(\sigma_1)\|_{\mathcal W}}$}
 \label{alg_line:gIS_max_err}
\ENDWHILE
\STATE{$R=r$}
	\end{algorithmic}
  \end{algorithm}
  
\subsection{Projection-based reduced order model}
We introduce the reduced basis reduced-order model corresponding to (\ref{eq:DCP}); it serves as a benchmark for assessing the proposed nonlinear model order reduction approach.
We employ the previously described reduced space $\hat{\mathcal V}_N$ and reduced cone $\hat{\mathcal W}_R^+$ then reads as follows: for a given $\mathbf{q}\in \mathcal Q$, find $(\boldsymbol{\alpha}^{N,R}, \boldsymbol{\beta}^{N,R})\in \mathbb{R}^{N} \times \mathbb{R}_+^{R}$ solution to 
\begin{equation}
\left\{
\begin{array}{l}
\boldsymbol{\alpha}^{N,R} = \hat{\boldsymbol{\upsilon}}_{\mathbf{q}}^N - (\hat{\mathbf{B}}^{N,R}_{\mathbf{q}})^T \boldsymbol{\beta}^{N,R},\\
\boldsymbol{\beta}^{N,R} \odot (\hat{\mathbf{B}}^{N,R}_{\mathbf{q}}\boldsymbol{\alpha}^{N,R} - \hat{\mathbf{d}}_{\mathbf{q}}^R) = 0,\\
\hat{\mathbf{B}}^{N,R}_{\mathbf{q}}\boldsymbol{\alpha}^{N,R} - \hat{\mathbf{d}}_{\mathbf{q}}^R \leq 0, \\
\end{array}
\right. 
\label{eq:DCPred}
\end{equation}
where
$
\hat{\boldsymbol{\upsilon}}_{\mathbf{q}}^N:= \hat{V}_N^T\boldsymbol{\upsilon}_{\mathbf{q}}
\in\mathbb{R}^N$, $\hat{\mathbf{B}}^{N,R}_{\mathbf{q}}:= (\hat{W}_R^+)^T \mathbf{B}_{\mathbf{q}}\hat{V}_N \in \mathbb{R}^{R\times N}$ and $\hat{\mathbf{d}}_{\mathbf{q}}^R:= (\hat{W}_R^+)^T\mathbf{d}_{\mathbf{q}} \in \mathbb{R}^R
$.

\medskip

We also use in practice the Uzawa algorithm to compute a solution of (\ref{eq:DCPred}). The latter then amounts to computing two sequences $(\boldsymbol{\alpha}^{N,R}_k)_{k\geq 0} \in (\mathbb{R}^{N})^{\mathbb{N}}$ and
$(\boldsymbol{\beta}^{N,R}_k)_{k\geq 0} \in (\mathbb{R}_{+}^{R})^{\mathbb{N}}$ so that
\begin{align}
\label{eq:DCM_ROM1}
    & \boldsymbol{\beta}^{N,R}_0 =\mathbf{0}\\
    & \boldsymbol{\alpha}^{N,R}_{k+1}=\hat{\boldsymbol{\upsilon}}_{\mathbf{q}}^N-(\hat{\mathbf{B}}^{N,R}_{\mathbf{q}})^T \boldsymbol{\beta}^{N,R}_k \label{eq:DCM_ROM2}\\
    & \boldsymbol{\beta}^{N,R}_{k+1}=\texttt{P}_+\left\{
    \boldsymbol{\beta}^{N,R}_k+\rho \left[
    \hat{\mathbf{B}}^{N,R}_{\mathbf{q}}\boldsymbol{\alpha}^{N,R}_{k+1}-\hat{\mathbf{d}}_{\mathbf{q}}^R
    \right]
    \right\} \label{eq:DCM_ROM3}
\end{align}
for $k=0,1,\ldots,$ until convergence of the scheme. 

This leads to approximations of the velocity $\mathbf{u}$ and Lagrangian multiplier $\boldsymbol{\lambda}$ of the form 
\begin{equation}\label{eq:approxu}
\hat{\mathbf{u}}^{N,R} = \sum_{n=1}^N \alpha_n^{N,R} \boldsymbol{\varphi}_n
\end{equation}
and 
$$
\hat{\boldsymbol{\lambda}}^{N,R} = \sum_{r=1}^R \beta_r^{N,R} \boldsymbol{\psi}_r,
$$
where $\boldsymbol{\alpha}^{N,R} =(\alpha_n^{N,R})_{1\leq n \leq N}$ and $\boldsymbol{\beta}^{N,R} =(\beta_n^{N,R})_{1\leq r \leq R}$. 

\medskip

More precisely, for a given $\mu \in \mathcal P$ and $1\leq \nu \leq N^T$, knowing the reduced-order model approximation $\hat{\mathbf{q}}^{N,R}(\nu-1, \mu)$ obtained from the previous time step, the reduced-order model approximation $\hat{\mathbf{q}}^{N,R}(\nu, \mu)$ is computed as
$$
\hat{\mathbf{q}}^{N,R}(\nu, \mu)  = \hat{\mathbf{q}}^{N,R}(\nu-1, \mu) + h \hat{\mathbf{u}}^{N,R}(\nu, \mu),
$$
where $\hat{\mathbf{u}}^{N,R}(\nu, \mu)$ is computed through the formula (\ref{eq:approxu}) with $(\boldsymbol{\alpha}^{N,R}, \boldsymbol{\beta}^{N,R})$ solutions of the reduced-order problem (\ref{eq:DCPred}) with $\mathbf{q}:= \hat{\mathbf{q}}^{N,R}(\nu-1, \mu)$.

\paragraph{Stability of the ROM}
As noted in Section \ref{sec:FOM}, the Lagrange multiplier is, in general, not unique (see \cite{maury2011discrete} for a graphical illustration of this non-uniqueness configuration). From a mathematical standpoint, this stems from the fact that the transposed constraint operator $f:\mathbb{R}^{\mathcal{R}}\rightarrow \mathbb{R}^{\mathcal{N}}$ defined by 
\[
f:\boldsymbol{\lambda} \mapsto \mathbf{B}_{\mathbf{q}}^T\boldsymbol{\lambda}
\]
is not injective in general, due to the over-constrained state of the system \eqref{eq:DCP} .
For this reason, problem \eqref{eq:DCP} is characterized by a stability constant
\begin{equation}
\gamma^{HF}(\sigma)=\inf_{\boldsymbol{\lambda} \in \mathcal{W}^+} \sup_{\mathbf{u} \in \mathcal V} \frac{\boldsymbol{\lambda}^T\mathbf{B}_{\mathbf{q}(\sigma)}  \mathbf{u}}{\|\mathbf{u}\|_{\mathcal{V}} \|\boldsymbol{\lambda}\|_{\mathcal{W}}}
\label{eq:HF_infsup}
\end{equation}
which evaluates to $0$. Indeed, in classical saddle-point theory for contact problems, a positive inf-sup constant guarantees i) uniqueness of contact forces and ii) the fact that the velocity basis is rich enough to satisfy any applied contact constraint.
To proceed with discussing the stability of the ROM, we analogously define the reduced inf-sup constant associated with a de-correlated construction of the reduced space $\hat{\mathcal{V}}_N$ and reduced cone $\hat{\mathcal{W}}_R^+$:
\begin{equation}
\hat{\gamma}^{N,R}(\sigma)=\inf_{\hat{\boldsymbol{\lambda}} \in \hat{\mathcal{W}}_R^+} \sup_{\hat{\mathbf{u}} \in \hat{\mathcal V}_N} \frac{\hat{\boldsymbol{\lambda}}^T\mathbf{B}_{\hat{\mathbf{q}}^{N,R}(\sigma)}  \hat{\mathbf{u}}}{\|\hat{\mathbf{u}}\|_{\mathcal{V}} \|\hat{\boldsymbol{\lambda}} \|_{\mathcal{W}}},
\label{eq:red_infsup_const}
\end{equation}
which is as well expected to be $0$ for different values of the pair $(N,R)$, as the reduced dual basis $\hat{\mathcal{W}}_R^+$ inherits the null-space forces present in the high-fidelity training data.
Even though multiplier uniqueness cannot be guaranteed, it remains physically mandatory that the primal velocity basis can satisfy the active constraints. To this end, we employ \emph{enrichment strategies}. Enrichment of the primal basis has been developed in several works in the literature: we refer to \cite{rovas2003reduced, rozza2007stability, niakh2023stable,haasdonk2012reduced}, whose common feature is the joint construction of the pair $(\hat{\mathcal{V}}_N,\hat{\mathcal{W}}^{+}_R)$. 
In this work, we rely on the Projected Gradient Algorithm (PGA) proposed in \cite{niakh2023stable}: at each iteration of PGA,  the primal basis is enriched by \emph{supremizers}, which represent the image of the dual basis under the transposed constraint operator, $\mathbf{B}_{\mathbf{q}}^T \hat{\mathcal{W}}^+_R$. The enriched primal basis is defined as $\hat{V}_N+S_R(\sigma) \subset \mathcal{V}$, where $S_R(\sigma):=\text{Span}(\{\mathbf{B}_{\mathbf{q}}^T \psi_r\}_{r \in \{1:R\}})$.
This supremizer space is constructed in a progressive way until the worst projection error $\sup_{v \in S_{R}(\sigma)}\|(\mathbb{I}-\texttt{P}_{V_N+S_R})(v)\|_{\mathcal{V}}$ drops below a strict physical tolerance $\delta$. 
Since $\mathbf{B}_{\mathbf{q}}$ is parameter-dependent, the corresponding supremizers are also parameter-dependent, meaning their exact evaluation would theoretically need to be done online. Instead, PGA is performed offline across the training snapshots to provide a sufficiently accurate, globally enriched velocity space that robustly spans the constraint dynamics for all evaluated parameters. We refer to \cite{niakh2023stable} for the analysis of the method and we remind to Section \ref{sec:num_res} for the numerical application of PGA on problem \eqref{eq:DCP}.

\subsection{Hyper-reduction}
\label{sec:hyper}
One major weakness of the RB method is related to the evaluation of components in the ROM that are associated with non-affine terms in parameters (at every time iteration). The assembly of these operators still leads to the resolution of a system of  size $\mathcal N+\mathcal R$: this is a well known limitation concerning the computational time efficiency gain that the RB can yield, since the possibility to devise an offline/online MOR decomposition relies on the assumption of affine parametric dependence.
The reduced system in \eqref{eq:DCM_ROM1}-\eqref{eq:DCM_ROM3} falls in this case. We briefly present two hyper-reduction techniques: the empirical interpolation method (EIM) (we refer to \cite{barrault2004empirical,chaturantabut2010nonlinear} for the description of the method and to \cite{benaceur2020reduced, newsum2017efficient} for applications to variational inequality problems; in particular, to \cite{fauque2018hybrid} for applications of EIM on contact problems) and the Empirical Quadrature (EQ) method \cite{antil2013two, farhat2015structure,yano2019lp}. 
For all $\sigma \in \mathcal S$, we denote by
$$
\mathbf{s}_1(\sigma):= \boldsymbol {\upsilon}_{\mathbf{q}(\sigma)} \in \mathbb{R}^{p_1}, \; \mathbf{s}_2(\sigma):= \mathbf{B}_{\mathbf{q}(\sigma)} \in \mathbb{R}^{p_2} \mbox{ and } \mathbf{s}_3(\sigma):= \mathbf{d}_{\mathbf{q}(\sigma)} \in \mathbb{R}^{p_3} 
$$
the non affine variables with $p_1 = \mathcal{N}$, $p_2 = \mathcal{N}\mathcal{R} $ and $p_3 = \mathcal{R}$ (notice here that we implicitly identified $\mathbb{R}^{ \mathcal{N}\times \mathcal{R}}$ with $\mathbb{R}^{ \mathcal{N}\mathcal{R}}$). 
\paragraph{Empirical Interpolation Method}
Let $k\in \{1, 2, 3\}$. For a given $m_k^{\rm EIM}\in \mathbb{N}^*$ (such that $m_k^{\rm EIM} \leq p_k$) and any $\sigma \in \mathcal S$, an EIM approximation of $\mathbf{s}_k(\sigma)$ is computed under the following form: 
\begin{equation}
\mathbf{s}_k(\sigma) \approx \sum_{m=1}^{m_k^{\rm EIM}} c_k^{(m)}(\sigma) \mathbf{g}_k^{(m)},
\label{eq:EIM}
\end{equation}
where $\left(\mathbf{g}_k^{(1)}, \ldots, \mathbf{g}_k^{(m_k^{\rm EIM})}\right)$ is a family of parameter-independent vectors of $\mathbb{R}^{p_k}$ and $\mathbf{c}_k(\sigma) := \left(c_k^{(m)}(\sigma)\right)_{1\leq m \leq m_k^{\rm EIM}} \in \mathbb{R}^{m_k^{\rm eim}}$ is a $m_k^{\rm EIM}$-dimensional vector of coefficients that is computed through the resolution of a linear system.  More precisely, the family $\{\mathbf{g}_k^{(m)}\}_{1\leq m \leq m_k^{\rm EIM}}$ is computed offline as the first $m_k^{\rm EIM}$ POD modes of the family of vectors $\{\mathbf{s}_k(\sigma)\}_{\sigma \in \mathcal S_{\rm train}}$. Let us introduce $\mathbf{G}_k = \left( \mathbf{g}_k^{(1)}|  \cdots | \mathbf{g}_k^{(m_k^{\rm EIM})} \right) \in \mathbb{R}^{p_k \times m_k^{\rm EIM}}$ and a given subset of indices $\mathcal I \subset \{1, \ldots, p_k\}$ such that we can denote $\mathbf{G}_{k, {\mathcal I}}:= \left( (\mathbf{g}_k^{(m)})_i\right)_{i\in \mathcal I, 1\leq m \leq m_k^{\rm EIM}} \in \mathbb{R}^{|\mathcal I| \times m_k^{\rm EIM}}$. 
In the offline phase, the EIM algorithm also computes as an output a subset of indices $\mathcal{I}_k \subset \{1, \ldots, p_k\}$ such that $|\mathcal{I}_k| = m_k^{\rm EIM}$ which is constructed via a greedy procedure. 
$$
\mathbf{G}_{k, \mathcal{I}_k} \mathbf{c}_k(\sigma) = (\mathbf{s}_k(\sigma))_{\mathcal{I}_k},
$$ 
where $(\mathbf{s}_k(\sigma))_{\mathcal{I}_k}:= \left( \mathbf{s}_k(\sigma)_{j}\right)_{j \in \mathcal{I}_k}\in \mathbb{R}^{m_k^{\rm eim}}$. 
\paragraph{Empirical Quadrature Method}
For each $k \in \{1,2,3\}$, instead of approximating $\mathbf{s}_k(\sigma)$ in the full space $\mathbb{R}^{p_k}$ as in EIM (see \eqref{eq:EIM}), the empirical quadrature (EQ) method seeks to approximate directly the \emph{projected} quantities that appear in the reduced Uzawa system \eqref{eq:discr_Uzawa_system}:
\begin{equation}
\begin{aligned}
  &  \hat{\boldsymbol{\upsilon}}(\sigma) := \hat{V}^{T}_N\mathbf{s}_1(\sigma) \in \mathbb{R}^N, \\
&    \hat{\mathbf{B}}_{\mathbf{q}}(\sigma) := {\mathcal{W}_{R}^{+}}^T\mathbf{s}_2(\sigma) \hat{V}_N \in \mathbb{R}^{R \times N}, \\
 &   \hat{\mathbf{d}}_{\mathbf{q}}(\sigma) := {\mathcal{W}_{R}^{+}}^T\mathbf{s}_3(\sigma) \in \mathbb{R}^R.
    \end{aligned}
    \label{eq:EQ_projected}
\end{equation}
Introducing the notation $\Pi_k(\sigma) \in \mathbb{R}^{q_k}$ for the projected quantity associated with $\mathbf{s}_k$, with $q_1 = N$, $q_2 =RN$ and $q_3 = R$, we have the following decomposition into a a sum of $p_k$ individual contributions:
\begin{equation}
    \Pi_k(\sigma) = \sum_{i=1}^{p_k} \left(\mathbf{s}_k(\sigma)\right)_i \, \mathbf{f}_k^{(i)}(\sigma) \in \mathbb{R}^{q_k},
    \label{eq:EQ_decomp}
\end{equation}
where $\mathbf{f}_k^{(i)} \in \mathbb{R}^{q_k}$ is the contribution of the $i$-th component of $\mathbf{s}_k$ to the projected quantity. Explicitly:
\begin{equation}
\begin{aligned}
   & \mathbf{f}_1^{(i)} = {(\hat{V}_N)}_{i,:} \in \mathbb{R}^N, \\
&    \mathbf{f}_2^{(i)} = \mathrm{vec}\!\left( {(\hat{W}^{+}_{R})}_{i,:}\otimes (\mathbf{b}_i(\sigma) \hat{V_N}) \right) \in \mathbb{R}^{RN}, \\
 &   \mathbf{f}_3^{(i)} = (\hat{W}^{+}_R)_{i,:} \in \mathbb{R}^R,
    \end{aligned}
\end{equation}
where $\mathbf{b}_i(\sigma)$ denotes the $i$-th row of $\mathbf{B}_{\mathbf{q}(\sigma)}$.

The EQ method seeks a sparse index set $\mathcal{I}_k \subset \{1, \ldots, p_k\}$ with $|\mathcal{I}_k| = m_k^{\rm EQ} \ll p_k$, and weights $\mathbf{w}_k = (w_k^{(i)})_{i \in \mathcal{I}_k} \in \mathbb{R}^{m_k^{\rm EQ}}$ such that:
\begin{equation}
    \Pi_k(\sigma) \approx \sum_{i \in \mathcal{I}_k} w_k^{(i)} \left(\mathbf{s}_k(\sigma)\right)_i \mathbf{f}_k^{(i)}(\sigma), \qquad \forall \sigma \in \mathcal{S}.
    \label{eq:EQ_approx}
\end{equation}
The EIM approximation \eqref{eq:EIM} is formulated in the full space $\mathbb{R}^{p_k}$, whose dimension scales with the number of degrees of freedom $\mathcal{N}$ or the number of contact pairs $\mathcal{R}$. In contrast, the EQ approximation \eqref{eq:EQ_approx} is defined directly in the reduced space $\mathbb{R}^{q_k}$, whose dimension is independent of $p_k$. We remark that we make here a precise choice of the weights sign. For $k \in \{1, 3\}$, the weights $\mathbf{w}_k$ are allowed to take arbitrary sign, since the spontaneous velocity $\mathbf{s}_1(\sigma)$ and the distance vector $\mathbf{s}_3(\sigma)$ can take positive and negative values. For $k = 2$, the weights are constrained to be non-negative, $w_2^{(i)} \geq 0$, motivated by the physical interpretation of the contact matrix.
\paragraph{Offline phase.}
We compute offline the index set $\mathcal{I}_k$ and weights $\mathbf{w}_k$ by means of a greedy algorithm known in the literature under the name of Non-Negative Orthogonal Matching Pursuit (NNOMP) \cite{zhang2011sparse}. The algorithm operates on the Gramian matrix $\mathbf{H}_k \in \mathbb{R}^{p_k \times p_k}$ and the correlation vector $\mathbf{c}_k \in \mathbb{R}^{p_k}$
\begin{equation}
\begin{aligned}
    &\mathbf{H}_k = \sum_{\sigma \in \mathcal{S}_{\rm train}} \mathbf{G}_k(\sigma)^\top \mathbf{G}_k(\sigma) \in \mathbb{R}^{p_k \times p_k}, \\
   & \mathbf{c}_k = \sum_{\sigma \in \mathcal{S}_{\rm train}} \mathbf{G}_k(\sigma)^\top \boldsymbol{\Pi}_k(\sigma) \in \mathbb{R}^{p_k},
    \end{aligned}
    \label{eq:EQ_gramian}
\end{equation}
where $\mathbf{G}_k(\sigma) \in \mathbb{R}^{q_k \times p_k}$ has columns $\mathbf{f}_k^{(i)}(\sigma)$. The quantities in  \eqref{eq:EQ_gramian} are assembled at each iteration from training snapshots without forming the full dense matrix $\mathbf{G}_k \in \mathbb{R}^{q_k \times p_k}$ (which would be computationally prohibitive for high-dimensional contact data.). 
The offline phase is summarized in Algorithm~\ref{alg:EQ_offline}.
\begin{algorithm}[h!]
\caption{Empirical Quadrature for the DCP --- offline phase}
\label{alg:EQ_offline}
\begin{algorithmic}[1]
    \STATE \textbf{Inputs}: $k\in\{1,2,3\}$, snapshots $\{\mathbf{s}_k(\sigma)\}_{\sigma \in \mathcal{S}_{\rm train}}$, $m_k^{\rm EQ}$, reduced matrices $\hat{V}_N$, $\hat{W}^+_R$, tolerance $\varepsilon_{\rm eq}$
    \STATE \textbf{Outputs}: $\mathcal{I}_k \subset \{1,\ldots,p_k\}$, weights $\mathbf{w}_k \in \mathbb{R}^{m_k^{\rm EQ}}$
    \STATE Assemble $\mathbf{H}_k$ and $\mathbf{c}_k$ via \eqref{eq:EQ_gramian}
    \STATE Initialize: $\mathcal{I}_k = \emptyset$, $\mathbf{r} = \mathbf{c}_k$
    \FOR{$m = 1, \ldots, m_k^{\rm EQ}$}
        \STATE Select best index:
        \begin{equation*}
            i^{(m)} = \begin{cases}
                \displaystyle\arg\max_{i \notin \mathcal{I}_k} \; \left(\mathbf{H}_k\right)_{:,i}^\top \mathbf{r} & \text{if } k = 2 \;\; (\text{non-negative weights}), \\[6pt]
                \displaystyle\arg\max_{i \notin \mathcal{I}_k} \; \left|\left(\mathbf{H}_k\right)_{:,i}^\top \mathbf{r}\right| & \text{if } k \in \{1,3\} \;\; (\text{signed weights}).
            \end{cases}
        \end{equation*}
        \STATE Update active set: $\mathcal{I}_k \leftarrow \mathcal{I}_k \cup \{i^{(m)}\}$
        \STATE Solve local optimization problem on active set:
        \begin{equation*}
            \mathbf{w}_k \leftarrow \begin{cases}
                \displaystyle\arg\min_{\mathbf{w} \geq 0} \;
                \left\| \left(\mathbf{H}_k\right)_{\mathcal{I}_k,\mathcal{I}_k} \mathbf{w} - \left(\mathbf{c}_k\right)_{\mathcal{I}_k} \right\|_2^2 & \text{if } k=2, \\[6pt]
                \left(\mathbf{H}_k\right)_{\mathcal{I}_k,\mathcal{I}_k}^{-1} \left(\mathbf{c}_k\right)_{\mathcal{I}_k} & \text{if } k \in \{1,3\}.
            \end{cases}
        \end{equation*}
        \STATE Update residual: $\mathbf{r} \leftarrow \mathbf{c}_k - \left(\mathbf{H}_k\right)_{:,\mathcal{I}_k} \mathbf{w}_k$
        \IF{$\|\mathbf{r}\|_2 \leq \varepsilon_{\rm eq}$}
            \STATE \textbf{break}
        \ENDIF
    \ENDFOR
    \STATE Store $\mathcal{I}_k$ and $\mathbf{w}_k$
\end{algorithmic}
\end{algorithm}

\paragraph{Online phase.}
In the online phase, the full assembly of $\mathbf{s}_k(\sigma)$ is avoided. Only the $m_k^{\rm EQ}$ components $(\mathbf{s}_k(\sigma))_i$ for $i \in \mathcal{I}_k$ are evaluated at cost $\mathcal{O}(m_k^{\rm EQ})$, and the projected quantities are assembled directly in the reduced space via the sparse weighted sums \eqref{eq:EQ_approx}. The online phase is summarized in Algorithm~\ref{alg:EQ_online}.

\begin{algorithm}[h!]
\caption{Empirical Quadrature for the DCP  --- online phase}
\label{alg:EQ_online}
\begin{algorithmic}[1]
    \STATE \textbf{Inputs}: $\sigma \in \mathcal{S}$, index sets $\mathcal{I}_k$ and weights $\mathbf{w}_k$ for $k \in \{1,2,3\}$, matrices $\hat{V}_N$, $\hat{W}^+_R$
    \STATE \textbf{Outputs}: $\hat{\boldsymbol{\upsilon}}_{\mathbf{q}}(\sigma)$, $\hat{\mathbf{B}}_{\mathbf{q}}(\sigma)$, $\hat{\mathbf{d}}_{\mathbf{q}}(\sigma)$
    \STATE Evaluate $\left(\mathbf{s}_k(\sigma)\right)_i$ only for $i \in \mathcal{I}_k$, $k=1,2,3$ 
    \STATE Compute:
    \begin{align*}
        \hat{\boldsymbol{\upsilon}}_{\mathbf{q}}(\sigma) &= \sum_{i \in \mathcal{I}_1} w_1^{(i)} \left(\mathbf{s}_1(\sigma)\right)_i {(\hat{V}_N)}_{i,:}^\top, \\[4pt]
        \widehat{\mathbf{B}}_{\mathbf{q}}(\sigma) &= \sum_{i \in \mathcal{I}_2} w_2^{(i)} \left(\hat{W}_R^{+}\right)_{i,:}^\top \otimes \left({(\mathbf{s}_2)}_i(\sigma) \hat{V}_N\right), \\[4pt]
        \hat{\mathbf{d}}_{\mathbf{q}}(\sigma) &= \sum_{i \in \mathcal{I}_3} w_3^{(i)} \left(\mathbf{s}_3(\sigma)\right)_i {(\hat{W}_R^{+})}_{i,:}^\top.
    \end{align*}
\end{algorithmic}
\end{algorithm}

The total online costs of  EIM and EQ methods are both independent of the problem size $p_k$; taking the example of $s_2$, they are in the order of  $\mathcal{O}(m_k^{\rm EIM} \cdot R \cdot N)$ per time step for EIM and of $\mathcal{O}(m_2^{\rm EQ} \cdot R \cdot N)$  per time step for EQ. We comment in section \ref{sec:num_res} on the numerical performance of both approaches on the discrete contact model \eqref{eq:DCP}.
\subsection{A machine-learning corrected ROM}
\label{sec:ML_correction}
As illustrated in Section \ref{sec:pod}, it is expected that the parametric DCP has a slowly decaying Kolmogorov $n$-width in general: in this case, a major gain in terms of accuracy is expected by the application of appropriate nonlinear reduced models. We present in this section the approach we propose in this work, which is inspired by the work in \cite{cohen2023nonlinear, barnett2023neural}. More precisely, rather than enriching the trial space by predicting unresolved tail components, as done in the former references, we have to deal with the fact that the enriched space is supremized, thus a simple regression of the discarded POD modes is not straightfoward in this case. We seek a solution reconstruction strictly within the truncated $n$-dimensional subspace $\hat{V}_{n} = \text{Span}(\{\varphi_i\}_{i \in \{1:n\}})$. The reconstruction relies on two components: i) the standard (parameter-dependent) RB coefficients $\alpha^{N,R}$ which are solutions to \eqref{eq:DCM_ROM1}-\eqref{eq:DCM_ROM3} and ii) a non-linear parametric map designed to learn the drift between the standard Galerkin approximation and the optimal projection error. 
The map takes as input the ROM coefficients $\{\alpha^{N,R}\}_{i \in \{1:n\}}$ (possibly augmented with the geometric parameters $\mu$ and the temporal variable $\nu$), truncated at a chosen dimension $n<N$, and gives as output the optimal correction vector $\Delta \boldsymbol{\alpha} \in \mathbb{R}^n$ for all $\sigma=(\mu, \nu)$. 
The reconstructed, ML-corrected velocities assume the following form: \[ \hat{\mathbf{u}}(\sigma)=\sum_{i=1}^n \left( \alpha^{N,R}_i(\sigma) + \left(\Psi(\boldsymbol{\alpha}^{N,R}(\sigma), \sigma) \right)_i \right) \varphi_i \]
The approximation is then computed on the particle positions by using the explicit time scheme
$\hat{\mathbf{q}}^{N,R}(\nu,\mu)=\hat{\mathbf{q}}(\nu-1,\mu)+h \hat{\mathbf{u}}(\nu,\mu)$.
A key aspect in this reduction setting is represented by the learning procedure of the non-linear map $\Psi$: we empirically observed that Random Forest regression is less prone to overfitting and presents a smaller number of tuning hyper-parameters than multi-layer-perceptron architecture. The numerical tests thus rely on the former technique. The comparison among different architecture for the ML correction is beyond the scope of this work.
We also remark that the procedure preserve the ROM inf-sup stability (at least in the training parametric set, as guaranteed by PGA). Indeed, to guarantee the inf-sup stability of the contact mechanics saddle-point problem, the ROM is strictly solved in the full, PGA-enriched primal-dual spaces $(\hat{V}_N, \hat{W}^+_R)$. However, to construct a highly efficient and low-dimensional nonlinear ML map $\Psi$, we only extract the first $n$ dominant coefficients of the stable ROM solution. The ML map acts as a post-processing map that correlates these $n$ dominant stable modes to the optimal high-fidelity projection. 

 \section{Numerical results}
\label{sec:num_res}
\paragraph{Problem setup.}
We consider a training parametric set of dimension $p_{\rm train}= |\mathcal P_{\rm train}|= 200$
 so that $\mathcal P_{\rm train} = \{\mu_p\}_{p=1}^{p_{\rm train}}$ which were chosen following a random procedure: the variation of the  the exit width $l_{\rm exit}$ and the magnitude  $c_{\boldsymbol{\upsilon}}$ of the  spontaneous velocity $\boldsymbol{\upsilon}$ follows
\begin{itemize}
\item $l_{\rm exit} \sim \text{Uniform}([\bar{l}_{\rm exit}-10\%\bar{l}_{\rm exit},\bar{l}_{\rm exit}+10\%\bar{l}_{\rm exit}])$,
\item $(c_{\boldsymbol{\upsilon}})_n \sim \text{Uniform}([(\bar{c_{\boldsymbol{\upsilon}}})_i-10\%(\bar{c_{\boldsymbol{\upsilon}}})_n,(\bar{c_{\boldsymbol{\upsilon}}})_n+10\%(\bar{c_{\boldsymbol{\upsilon}}})_n])$, for $n=1, \ldots, \mathcal N$,
\end{itemize}
 with $\bar{l}_{\rm exit}$ and $\bar{\boldsymbol{\upsilon}}$ some  prescribed reference values.
For each parameter, the FOM \eqref{eq:discr_Uzawa_system} is characterized by a termination condition of $\epsilon=10^{-12}$ on the relative error between two consecutive Uzawa solutions and a maximum number of iterations equal to $10^{7}$. To take into account the lack of contacts for some time instants, the following criterion on the Lagrange multipliers error is evaluated at each iteration of \eqref{eq:discr_Uzawa_system}:
$$\frac{\|\boldsymbol{\lambda}^{k}-\boldsymbol{\lambda}^{k-1}\|_{\mathcal W}}{\|\boldsymbol{\lambda}^{k-1}\|_{\mathcal W}+1}\leq \epsilon.$$
The gradient step in \eqref{eq:Uzawa_lambda} is fixed and it is chosen inside the convergence interval (cf. section \ref{sec:FOM}): in particular, we choose $\rho=\frac{0.2}{h^2}$.
The same settings is used for the ROM \eqref{eq:DCM_ROM1}-\eqref{eq:DCM_ROM3}. \\
We assess performance based on a parametric set $\mathcal P_{\rm valid}$ of cardinality $p_{\rm valid}$ generated using the same distributions as for the training set. We consider the geometric setting depicted in Figure~\ref{fig:region}: it is characterized by a region of size $l_{\rm hall}L_{\rm hall}$---where the agents are initially placed---and two obstacles of length $l_{\rm wall}$. After having passed through the barrier walls, the crowd is counted out of the area of interest.
The particles positions initialization relies i) on a random placement of the particles, ii) the setup of local constraints for both inter-particle distances and the bounding box walls, iii) the execution of one Uzawa iteration to separate the overlapping particles within $1000$ iterations. This procedure  avoids the  $\mathcal{O}(({N^{\rm{a}}})^2)$  loop generating and rejecting thousands of samples for dense configurations and it is thus suitable to highly-congested simulations.\\
\begin{figure}[h!]
\centering
    \begin{tikzpicture}[scale=0.8]
\draw[very thick, blue] (0,0) -- (6,0) -- (6,2) -- (4,2); 
\draw[very thick, blue] (2,2) -- (0,2) -- (0,0);
\draw[white, pattern=north west lines, pattern color=blue] (4,2) -- (6,2)--(6,2.5)-- (4,2.5)--(4,2);
\draw[blue,very thick, pattern=north west lines, pattern color=blue] (4,2) -- (6,2) ;
\draw[very thick] (6,2)--(6,2.5)-- (4,2.5) ;
\draw[blue,very thick, pattern=north west lines, pattern color=blue] (4,2.5)--(4,2);
\draw[white,very thick, pattern=north west lines, pattern color=blue] (0,2) -- (2,2) -- (2,2.5) -- (0,2.5) -- (0,2); 
\draw[blue,very thick](0,2) -- (2,2) -- (2,2.5);
\draw[very thick] (2,2.5) -- (0,2.5) -- (0,2);
\draw[thick,dotted](6,2) -- (6,4) -- (0,4) -- (0,2);  
\draw[thick,->] (-0.7,-0.7) -- (0.25,-0.7) node[anchor=north west] {$x$};
\draw[thick,->] (-0.7,-0.7) -- (-0.7,0.25) node[anchor=south east] {$y$};
\draw [black, thick,<->] (0,-0.15) -- node[below=1mm] {$l_{\rm hall}$} (6,-0.15);

\draw [black, thick,<->] (6.16,0) -- node[right=1mm] {$L_{\rm hall}$}  (6.15,2);
\draw [black, thick,<->] (4,2.65) -- node[above=1mm] {$l_{\rm wall}$}(6,2.65);
\draw [black, thick,<->] (0,2.65) -- node[above=1mm] {$l_{\rm wall}$}(2,2.65);
\draw [black, thick,<->] (2,2.25) -- node[above=1mm] {$l_{\rm exit}$}(4,2.25);
\draw[red] (3,1) circle (0.15cm);
\draw[red] (3,1.5) circle (0.15cm);
\draw[red] (2.6,1.7) circle (0.15cm);
\draw[red] (0.25,1.2) circle (0.15cm);
\draw[red] (1.25,0.6) circle (0.15cm);
\draw[red] (0.8,0.35) circle (0.15cm);
\draw[red] (1.25+0.8,0.6+0.8) circle (0.15cm);
\draw[red] (0.8+0.8,0.35+0.8) circle (0.15cm);
\draw[red] (1.25+2.8,0.6+0.8) circle (0.15cm);
\draw[red] (0.8+2.8,0.35+0.8) circle (0.15cm);
\draw[red] (1.25+4.2,0.6+1) circle (0.15cm);
\draw[red] (0.8+4.2,0.35+1) circle (0.15cm);
\draw[red] (1.25+3,0.35) circle (0.15cm);
\draw[red] (0.8+3,0.6) circle (0.15cm);
\draw[red] (1.25+3.8,0.3) circle (0.15cm);
\draw[red] (0.8+3.8,0.62) circle (0.15cm);
\draw[red] (5.7,0.6) circle (0.15cm);
\draw[red] (1.7,0.3) circle (0.15cm);
\draw[red] (2.5,0.6) circle (0.15cm);
\draw[red] (3.25,0.28) circle (0.15cm);
\filldraw [red] (3,1) circle (0.4pt);
\filldraw [red] (3,1.5) circle (0.4pt);
\filldraw [red] (2.6,1.7) circle (0.4pt);
\filldraw [red] (0.25,1.2) circle (0.4pt);
\filldraw [red] (1.25,0.6) circle (0.4pt);
\filldraw [red] (0.8,0.35) circle (0.4pt);

\filldraw [red] (1.25+0.8,0.6+0.8) circle (0.4pt);
\filldraw [red] (0.8+0.8,0.35+0.8) circle (0.4pt);
\filldraw [red] (1.25+2.8,0.6+0.8) circle (0.4pt);
\filldraw [red] (0.8+2.8,0.35+0.8) circle (0.4pt);
\filldraw [red] (1.25+4.2,0.6+1) circle (0.4pt);
\filldraw [red] (0.8+4.2,0.35+1) circle (0.4pt);
\filldraw [red] (1.25+3,0.35) circle (0.4pt);
\filldraw [red] (0.8+3,0.6) circle (0.4pt);
\filldraw [red] (1.25+3.8,0.3) circle (0.4pt);
\filldraw [red] (0.8+3.8,0.62) circle (0.4pt);

\filldraw [red] (5.7,0.6) circle (0.4pt);
\filldraw [red] (1.7,0.3) circle (0.4pt);
\filldraw [red] (2.5,0.6) circle (0.4pt);
\filldraw [red] (3.25,0.28) circle (0.4pt);
\end{tikzpicture}
\caption{Problem setup. Representative sketch of the two-dimensional DCP region: obstacles and contact walls are marked in blue; particles are marked in red.}
\label{fig:region}
\end{figure}
In this first study case, we set the number of particles equal to $N^{\rm a}=20$, their radius $r^{\rm a}=L_{\rm hall}/20$ and we set  $N^{\rm obst}=2$. The total number of potentially active contacts of the setting in Figure~\ref{fig:region} is given by $\mathcal R =N^{\rm cont}=\frac{N^{\rm a} (N^{\rm a}-1)}{2}+N^{\rm a}N^{\rm obst}=210$.
To assess the accuracy of ROM, we define a out-of-sample prediction error
$E_{\rm avg}=\frac{1}{|\mathcal P_{\rm valid}|} \sum_{\mu \in \mathcal P_{\rm valid}} E^q_{\mu}$
where the $\mu-$dependent relative error is computed as
$$E_{\mu}(\mathbf{q}, \hat{\mathbf{q}}^{N,R}):=\frac{\sqrt{\sum_{\nu=1}^{N^T} (t^{\nu}-t^{\nu-1})\left\|\mathbf{q}(\nu,\mu)-\hat{\mathbf{q}}^{N,R}(\nu,\mu)\right\|_{\mathcal V}^2}
}{\sqrt{\sum_{\nu=1}^{N^T} (t^{\nu}-t^{\nu-1})\left\|\mathbf{q}(\nu,\mu)\right\|_{\mathcal V}^2}}$$
(we used the same norm definition for the velocities set $\mathcal{V}$ and the particles set $\mathcal{Q}$). 
We recall that the corresponding training geometric-time completed parameters are denoted by $\sigma \in \mathcal{S}_{\rm{train}}=\{0, \ldots, N^{T}\} \times \mathcal{P}_{\rm{train}}$. In this first study case, $N^T$is not fixed, since $N^{\rm{a}}$ is not huge and we can afford following the particle dynamics until the crowd exits the hall region (see Figure \ref{fig:region}) . 
\paragraph{Convergence of Uzawa scheme}
\begin{figure}[h]
	\centering
\includegraphics[scale=0.35]{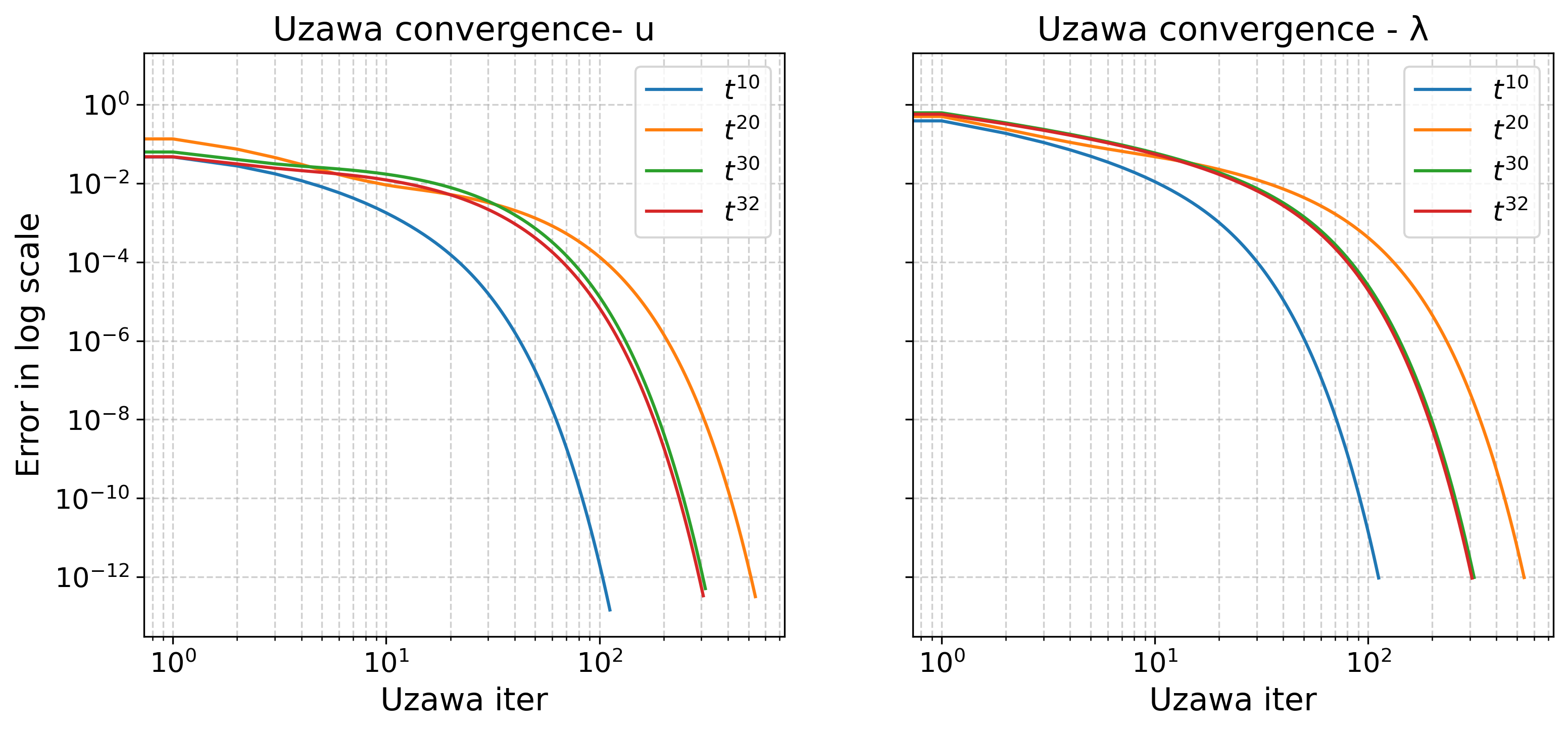}
\caption{Convergence of Uzawa scheme. Convergence history of the HF Uzawa solve for a representative training parameter and selected time steps.}
\label{fig:Uzawa_conv_hist}
\end{figure}
In Figure~\ref{fig:Uzawa_conv_hist} we display the convergence history of Uzawa scheme for the FOM \eqref{eq:discr_Uzawa_system}: in particular, the iteration errors for different time steps and for both the velocities and Lagrange multipliers. 
At  time step  $t^{10}$ the Uzawa scheme requires a smaller number of iterations than in the sequent times; also at $t^{30}$ the number of iterations is smaller than the most recent previous times: the latter cases correspond to the situation where the majority of agents has overcome the obstacles region. We expect the average number of iterations to dramatically increase with the number of agents for all the time steps: we postpone to a future work the investigation of Uzawa convergence with respect to $N^{\rm a}$.
We remark that the ROM completely bypasses the iterative Uzawa gradient approach in favor of solving a small, constrained quadratic problem directly using Non-Negative Least Squares using the python library \texttt{scipy.optimize}.
\paragraph{Dual reduced cone construction.}
We compare the performance of Algorithm~\ref{alg:mCPG} (dubbed mCPG) and Algorithm~\ref{alg:gIS} (dubbed gIS) proposed in section \ref{sec:dual_basis} for the construction of the dual reduced cone $\hat{\mathcal{W}}^+$. In Figure \ref{fig:dual_basis_constr}, we show the maximum projection errors on the Lagrange multipliers for training parameters in $\mathcal{S}_{\rm train}$.
\begin{figure}[h]
\subfloat[Maximum projection error for increasing dimension $r$ (cf. line \ref{alg_line:proj_err} of Algorithm~\ref{alg:mCPG} and line \ref{alg_line:gIS_max_err} of Algorithm~\ref{alg:gIS}).]{
\includegraphics[scale=0.28]{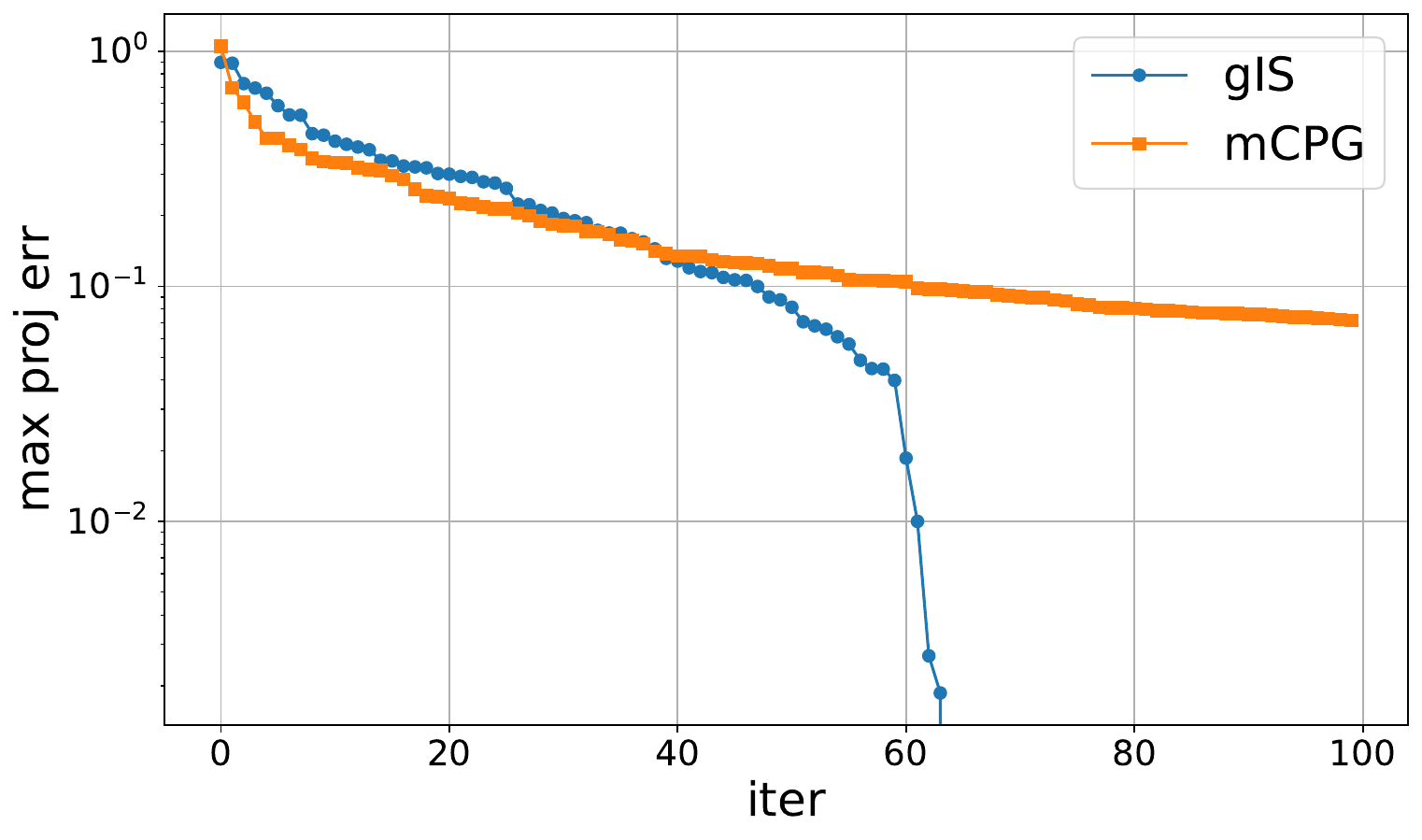}
\label{fig:dual_basis_constr}
}
\subfloat[Projection error for in-sample predictions]{\includegraphics[scale=0.28]{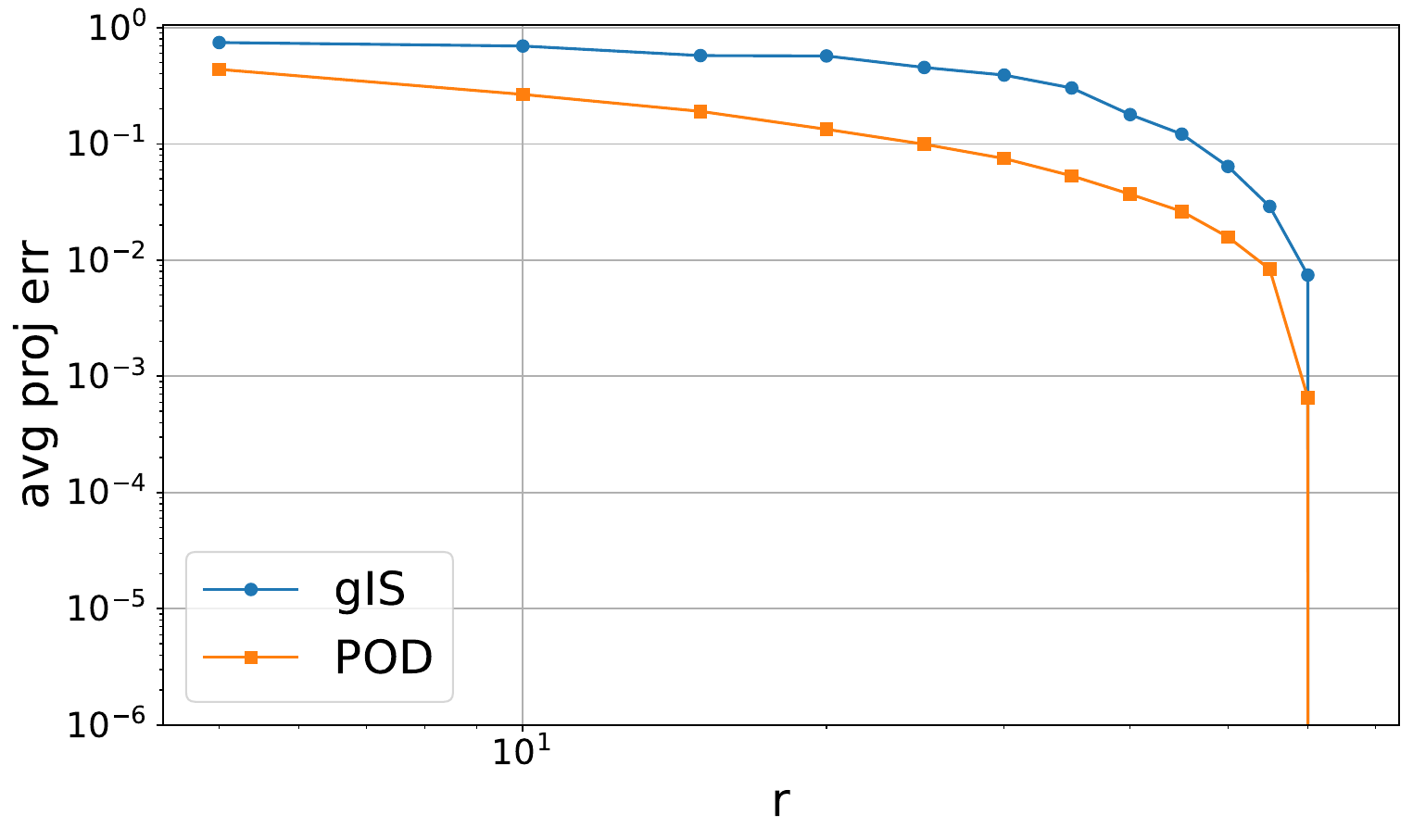}
\label{fig:dual_basis_compare}
}
\caption{Dual reduced cone construction. (a): construction of the reduced cone for the Lagrange multipliers; (b): comparison with POD for $\sigma \in \mathcal{S}_{\rm{train}}$. }
\label{fig:dual_basis}
\end{figure}
We observe that mCPG and gIS errors are comparable for approximately $r<45$; after that value, the gIS error curve is characterized by a significant decay towards $0$, (for $r=65$) rather than that of mCPG (the latter requires a number of iterations $r\gg100$ to achieve the same accuracy). The reduced basis provided by gIS method turns out to be more suitable to deal with the highly sparse snapshots $\{\boldsymbol{\lambda}(\sigma)\}_{\sigma \in \mathcal S_{\rm train}}$; furthermore, the computational time required by the gIS is $8$ times lower than the one required by mCPG, the latter being based on the solution of two constraint minimization problems at each iteration. 
For completeness of the results, the average errors for $\sigma \in \mathcal{S}_{
\rm train}$ are also computed in the case of both the gIS and POD algorithms: as we expected, the gIS construction is suboptimal compared to the POD; however, the minimum error is achieved by both algorithms for the same maximum dimension of the reduced cone (see Figure \ref{fig:dual_basis}\protect\subref{fig:dual_basis_compare}). In Appendix \ref{sec:hertz_appendix}, we further investigate, both theoretically and numerically, the performance of a greedy search strategy for contact pressure in a sphere–plane Hertz case.

\paragraph{Stability of the projection-based ROM}
\label{subsec:stability}
We consider here the stability of the ROM \eqref{eq:DCM_ROM1}-\eqref{eq:DCM_ROM3}. In Figure \ref{fig:infsup} (a) we compute the reduced inf-sup constants for a subset of the training parameters in $\mathcal{S}_{\rm train}$: in particular we chose $n_{\rm{train}}=80$. The heatmap shows the progressive loss of inf-sup stability for several primal-dual pairs, in particular when $R>N$. We further show in Figure \ref{fig:infsup}(b) the PGA algorithm convergence for two values of the projection error threshold $\delta \in \{10^{-4}, 10^{-8}\}$  and  primal-dual pair dimension $(10,15)$. In Figure \ref{fig:infsup}(c), we also show the enhanced inf-sup stability achieved by PGA algorithm also for out-of-sample parameters in $\mathcal{P}_{\rm{valid}}$. The reduced inf-sup constant for the pair $(10,15)$ without supremizer enrichment is depicted in red and is equal to $0$ for all the parameters.

\begin{figure}[h]
\centering
\subfloat[Reduced inf-sup constants for $80$ training parameters.]{
	\includegraphics[scale=0.32]{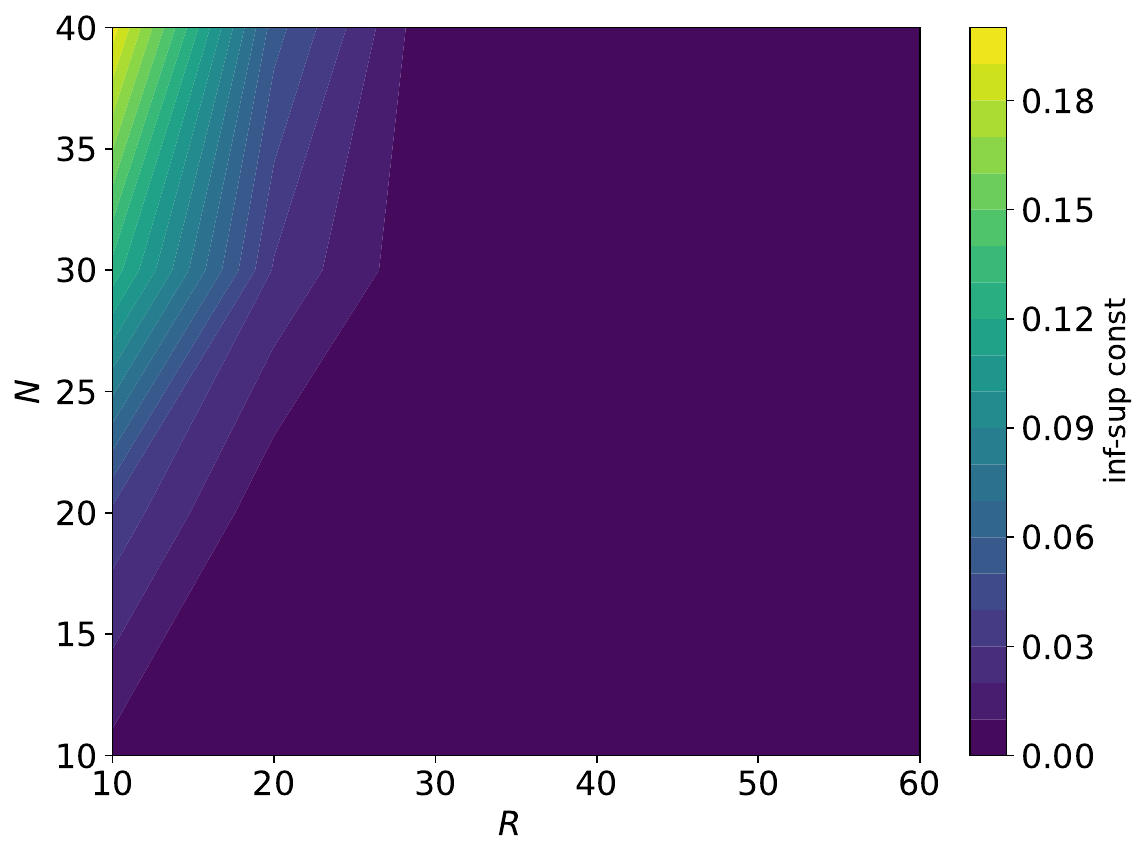}
}
\quad
\subfloat[PGA convergence for $N=10$, $R=15$ ]{
\includegraphics[scale=0.3]{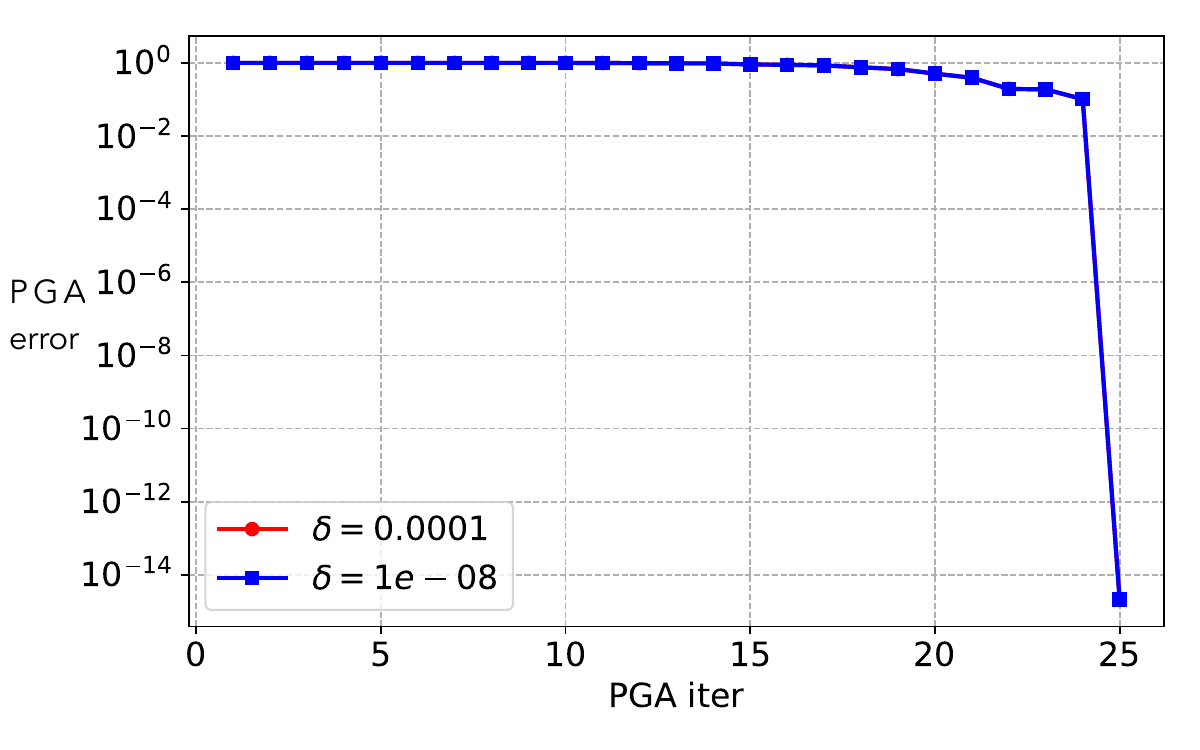}
}
 \subfloat[$\hat{\beta}^{\rm{dec}}$  and $\hat{\beta}^{\rm{on}}$ for $\mu \in \mathcal{P}_{\rm{valid}}$ and $N=10$, $R=15$ ]{
 		\includegraphics[scale=0.3]{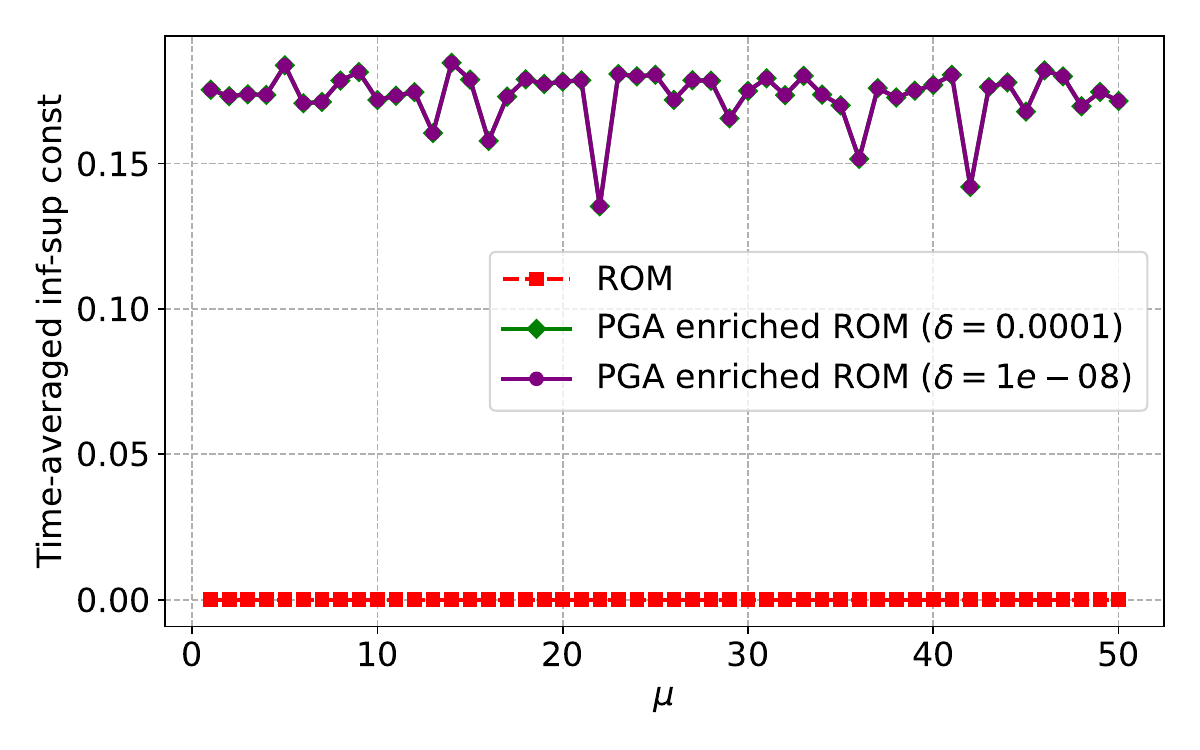}
 }
 \caption{Stability of the reduced system and convergence of the PGA algorithm.}
 \label{fig:infsup}
\end{figure}

\paragraph{Hyper-reduced model}
\label{sec:numres_hyper}
In Figure \ref{fig:eim_pod} we depict the normalized POD eigenvalues for increasing values of $m^{\rm{EIM}}=1, \ldots, m_k^{\rm{EIM}}$ for $k=1, 2,3$. The decay of the eigenvalues is rather slow, especially for the quantity $s_2$, which corresponds to the contact matrices. For instance, achieving a projection error for $s_2$ smaller than $10^{-3}$ would require more than $150$ modes. 

\begin{figure}[h]
	\centering
	\includegraphics[scale=0.38]{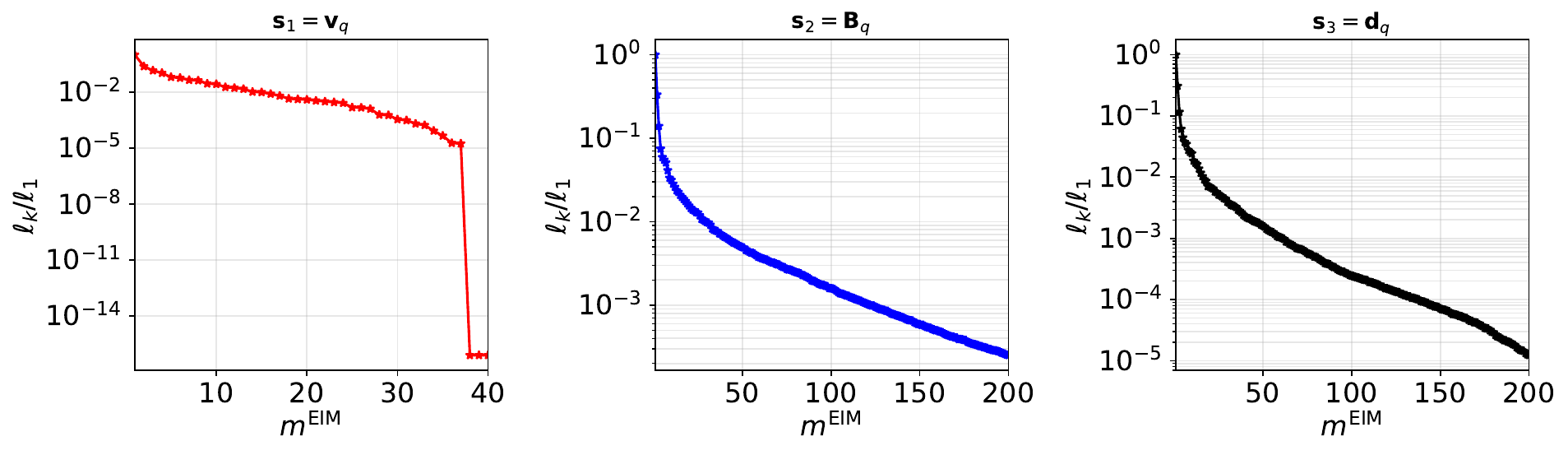}
	\caption{Hyper-reduced model . Empirical interpolation method for the DCP. The POD singular values decay associated with $\mathbf{s}_1$, $\mathbf{s}_2$ and $\mathbf{s}_3$ is depicted for increasing values of $m^{\rm{EIM}}$.}
	\label{fig:eim_pod}
\end{figure}

\begin{figure}[h]
\centering
\includegraphics[scale=0.36]{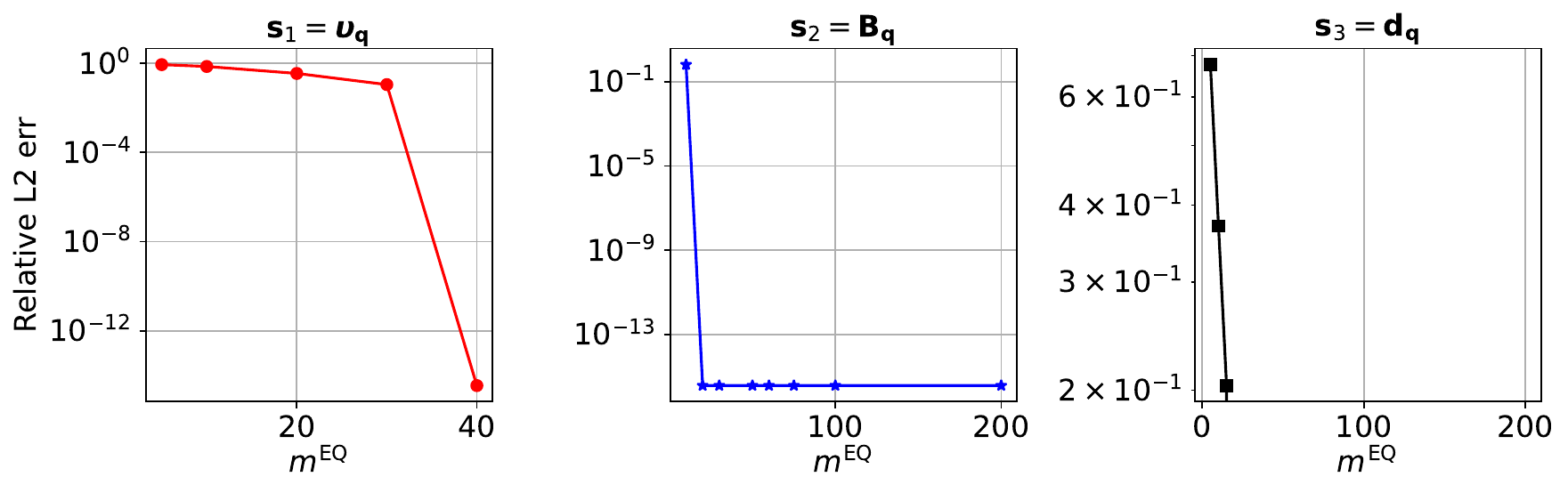}
\caption{Hyper-reduced model . Empirical quadrature procedure. Relative $L^2$ error associated with the three non affine variables $\mathbf{s}_1$, $\mathbf{s}_2$ and $\mathbf{s}_3$.}
\label{fig:EQ_error_decay}
\end{figure}
Figure \ref{fig:EQ_error_decay} reports the corresponding results obtained with the EQ hyper-reduction. As described in section \ref{sec:hyper}, both online computational costs are independent of $\mathcal{N}$ and $\mathcal{R}$, the critical difference is in the number of points required to achieve a given accuracy. Due to the slow singular value decay of the contact snapshot matrix (Figure~\ref{fig:eim_pod}), EIM requires $m_2^{\rm EIM} = \mathcal{O}(\mathcal{R})$ modes to achieve a projection error smaller than $10^{-3}$ on $\mathbf{s}_2$. In contrast, EQ operates directly in the reduced space $\mathbb{R}^{R \times (N+n^{\rm{PGA}})}$ and requires only $m_2^{\rm EQ} = \mathcal{O}(R)$ points (Figure~\ref{fig:EQ_error_decay}). Since $R \ll \mathcal{R}$, the EQ method yields a significantly lower online cost for this contact problem.\\
In Figure \ref{fig:EQ_error_in_time}, we report the $L^2$ relative error over time for several choices of the reduced spaces$(\hat{V}_N, \hat{W}_R^{+})$, where $\hat{V}_N$ is PGA-enriched. The shaded region represents the variability of the error across different validation parameters in $\mathcal{P}_{\rm{valid}}$, while the solid line denotes the mean error. Aside from the zero error at the initial condition, the error exhibits only a mild growth over time. For all configurations, $R$ Empirical Quadrature points are used to approximate $\mathbf{B}_{\mathbf{q}(\sigma)}$ ,  $2R$  to approximate $\mathbf{d}_{\mathbf{q}(\sigma)}$, and $2N$ for the approximation of $\boldsymbol{\upsilon}_{\sigma}$.
\begin{figure}[h]
    \centering
    \includegraphics[width=0.65\textwidth]{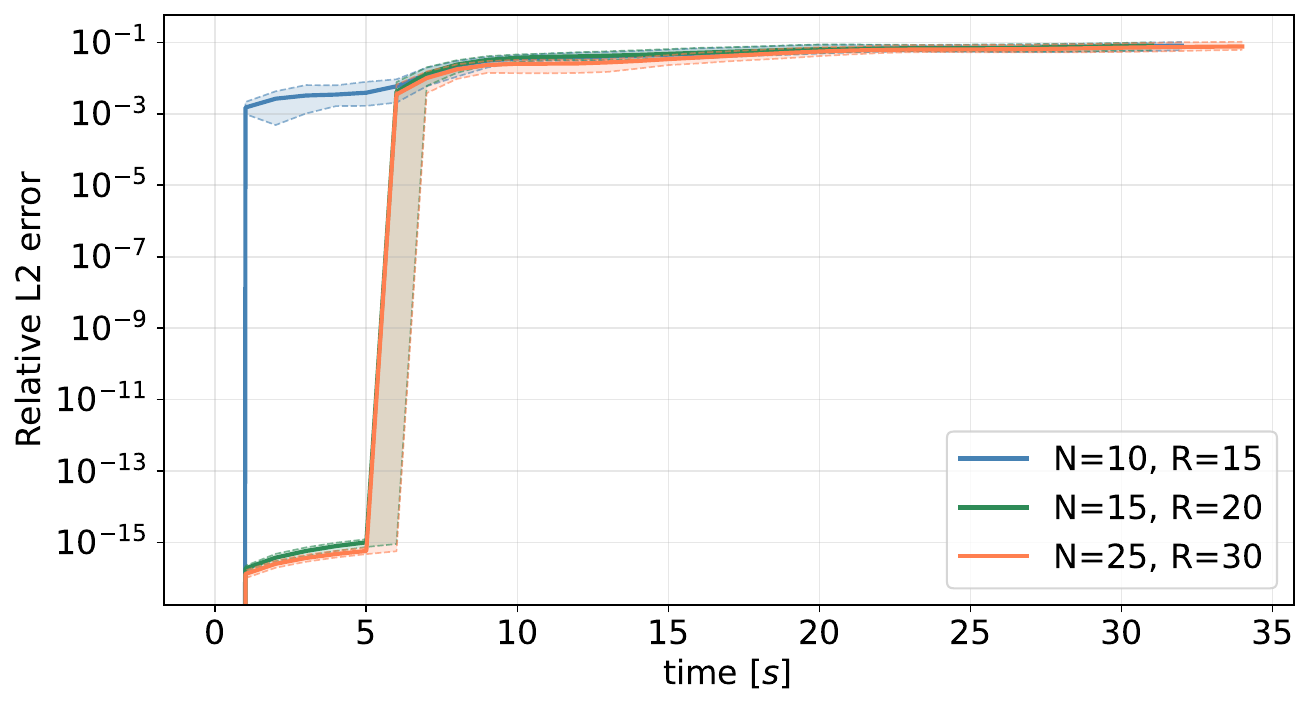}
    \caption{Hyper-reduced model . Empirical Quadrature procedure. $L^2$ error in time for increasing dimensions of $(\hat{\mathcal{V}}_N,\hat{\mathcal{W}}_R^{+})$. The solid lines indicate the average error, the colored shadowed the min-max errors for $\mu \in \mathcal{P}_{\rm{valid}}$. For each RB dimension, we used $m_{B}^{\rm{EQ}}=R$.}
    \label{fig:EQ_error_in_time}
\end{figure}
In Figure \ref{fig:ROM_EQ_speedup} we show the performance in terms of prediction accuracy vs computational gain: we can observe that a speedup in the range $30-60$ is achieved by the hyper-reduced model for different dimensions of the ROM. In Figure \ref{fig:particles_traj} we depict the trajectories of three selected particles in the crowd. The error is observed to vary with the particle index, as well as with proximity to obstacles and neighboring particles, which explains the variability in the results. In particular, particles located farther from the exit tend to be more prone to inaccurate predictions. The particles positions shown in Figure \ref{fig:particles_positions} for a validation parameter $\bar{\mu}$ at three selected time steps offer a qualitative illustration of the predictive accuracy reported in Figures \ref{fig:EQ_error_in_time} and \ref{fig:ROM_EQ_speedup}(a).
\begin{figure}[h]
\centering
\includegraphics[scale=0.32]{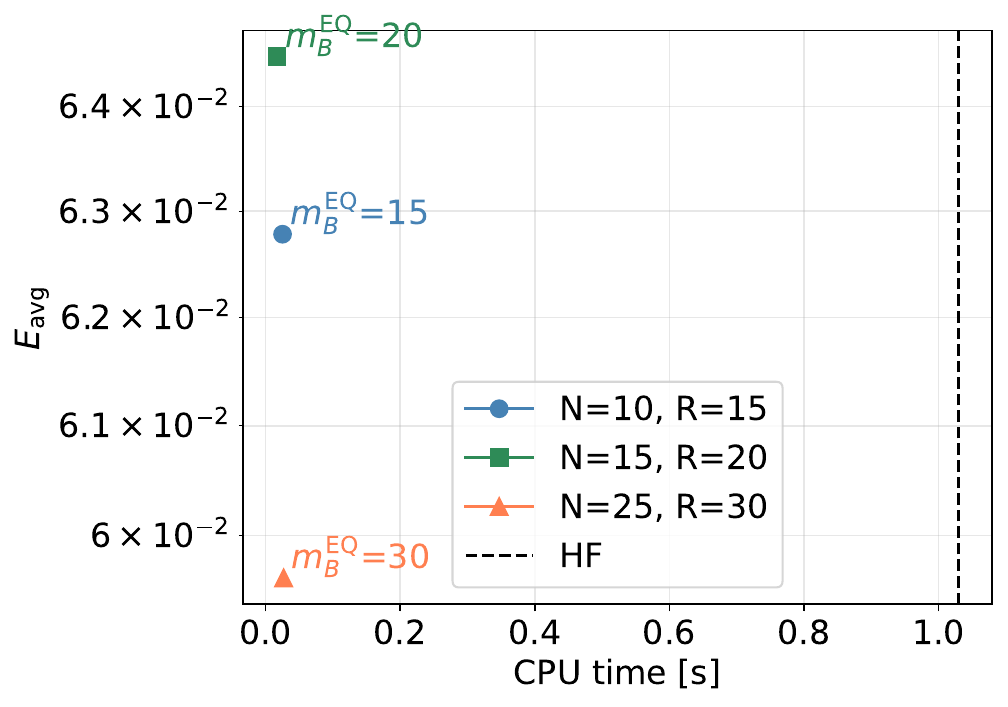}
\includegraphics[scale=0.32]{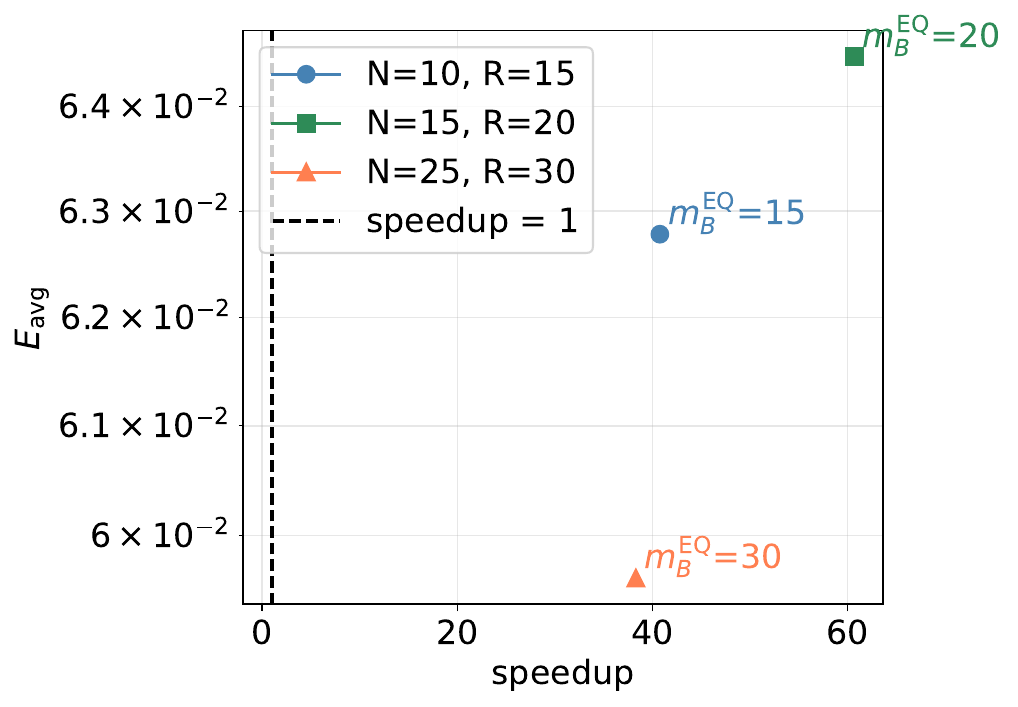}
\caption{Hyper-reduced model . Galerkin ROM performance. (a): Pareto plot showing $E{q}_{\rm{avg}}$ vs the computational cost $[s]$. (b): Pareto plot showing $E_{\rm{avg}}$ vs the computational speedup with respect to the FOM.}
\label{fig:ROM_EQ_speedup}
\end{figure}
\begin{figure}[h!]
    \centering
    \includegraphics[width=0.3\linewidth]{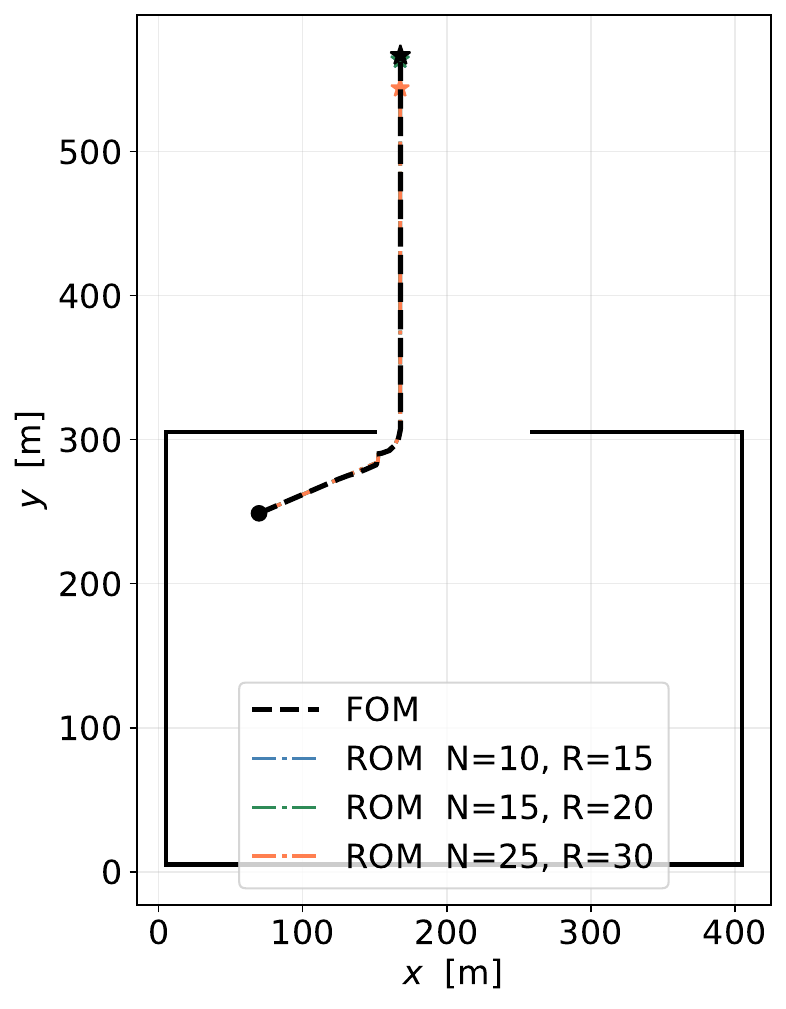}
    \includegraphics[width=0.3\linewidth]{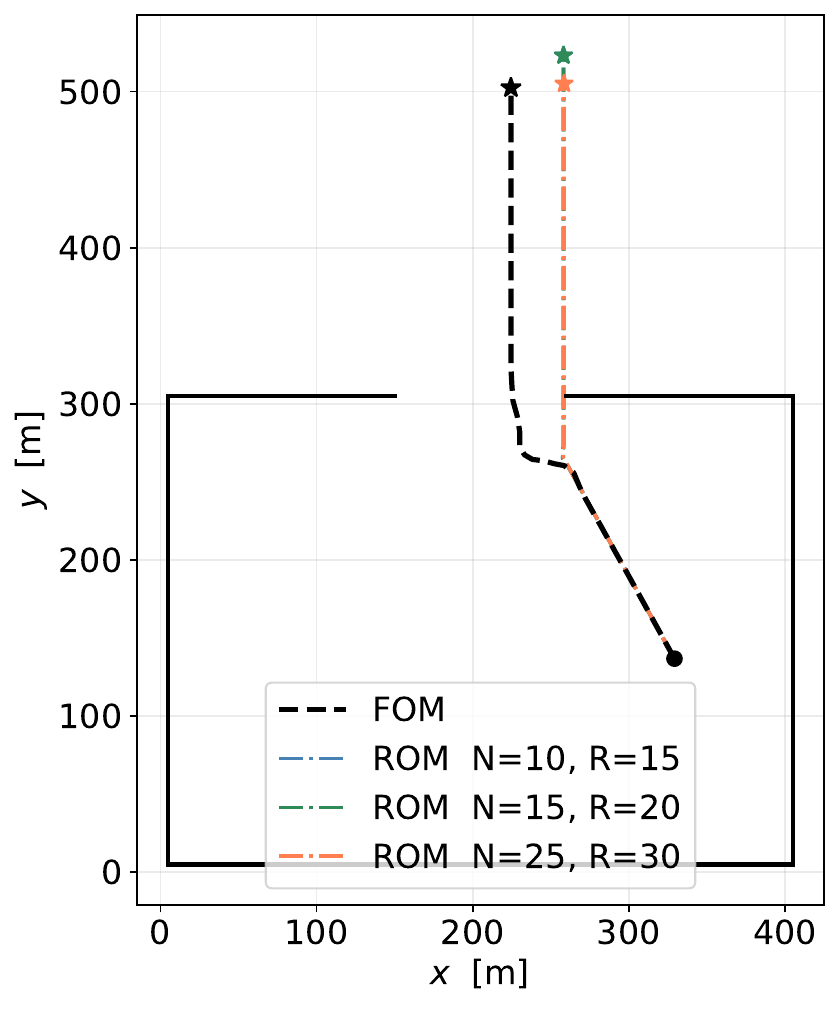}
    \includegraphics[width=0.3\linewidth]{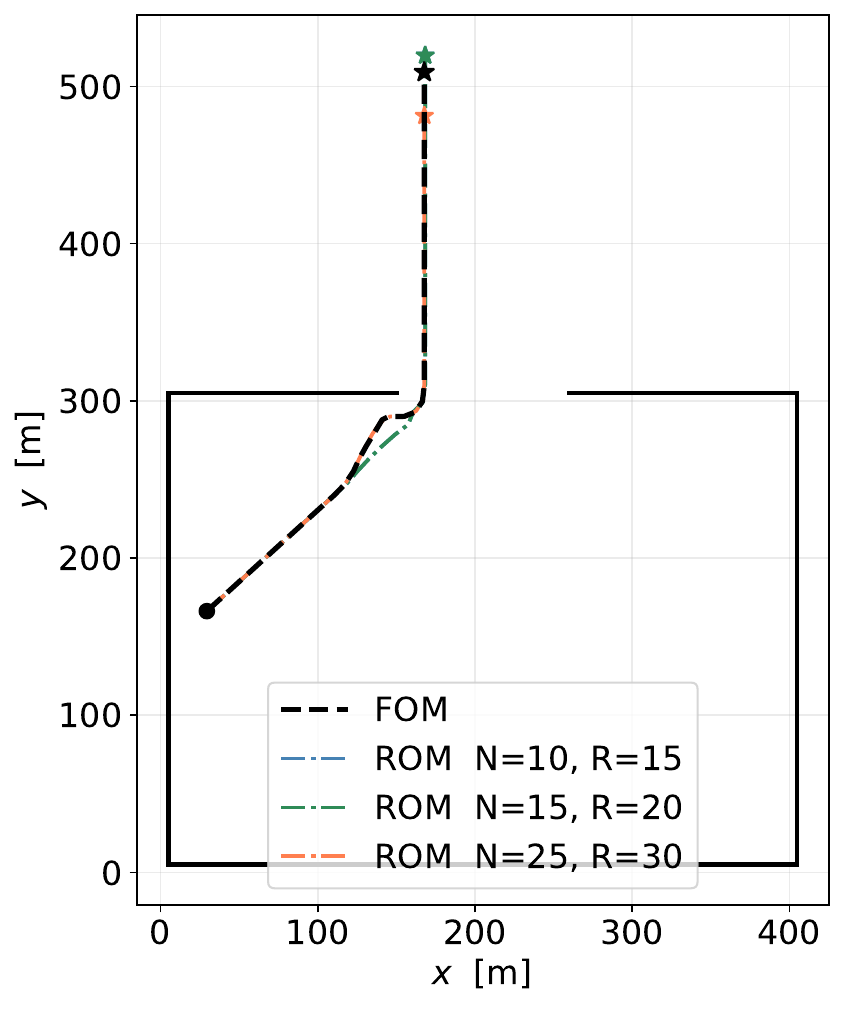}
    \caption{Hyper-reduced model . Sketch of Galerkin EQ-ROM predicted trajectories compared with FOM trajectories for a representative out-of-sample parameter $\bar{\mu} \in \mathcal{P}_{\rm{valid}}$ for three selected particles. For each dimension $(N,R)$, we choose $m_{B}^{EQ}=R$.}
    \label{fig:particles_traj}
\end{figure}

\begin{figure}
    \centering
    \includegraphics[width=0.24\linewidth]{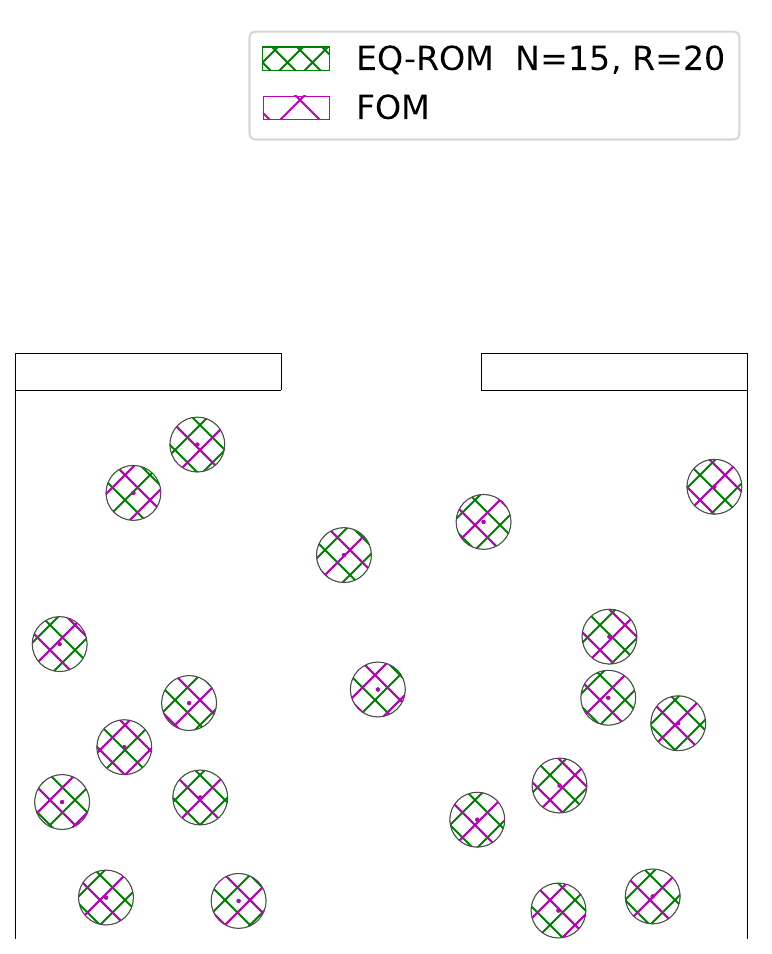}
     \includegraphics[width=0.24\linewidth]{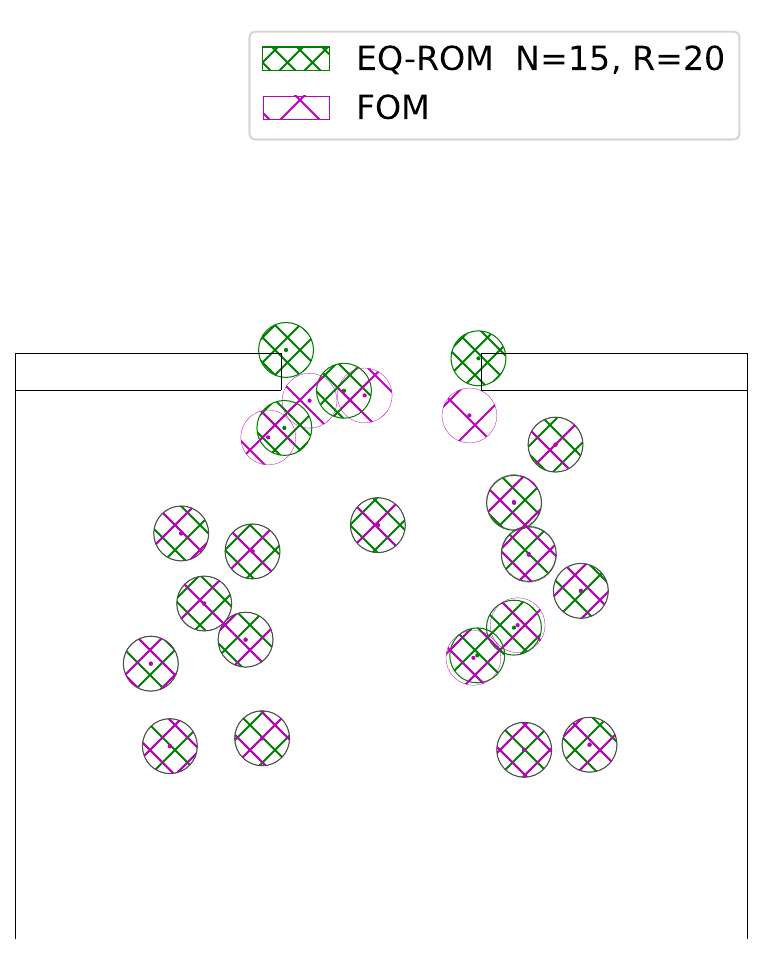}
      \includegraphics[width=0.24\linewidth]{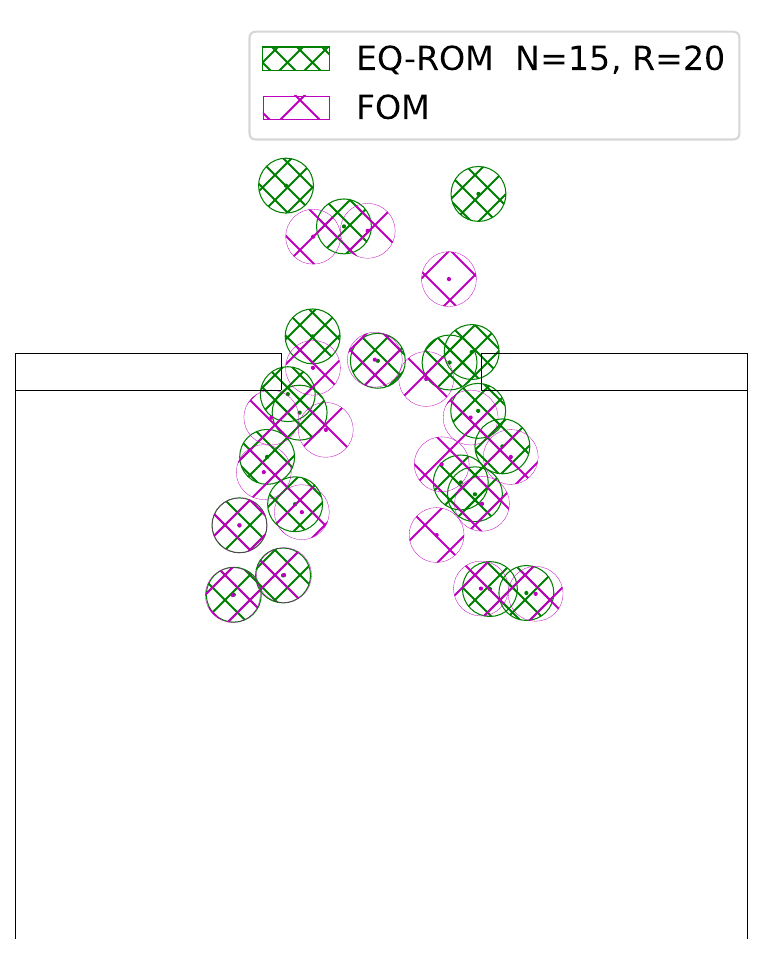}
      
    \caption{Hyper-reduced model . Particle positions at three selected times with indices $\nu \in \{0,9,18\}$ for $N=15$ and $R=20$.}
    \label{fig:particles_positions}
\end{figure}
\paragraph{ML-corrected ROM}
We trained a regressor $\Phi: \mathbb{R}^n \rightarrow \mathbb{R}^{N-n}$ to map the dominant generalized coordinates $\{\alpha_i\}_{i\in \{1:n\}}$—acquired from evaluating the Galerkin ROM conservatively offline at an optimal dimension $N$ (e.g., $N=40$) and subsequently truncating—onto the residual spectral tail $\{\alpha_j\}_{j \in \{n+1:\}}$ extracted strictly from the High-Fidelity Exact snapshot matrices.  To ensure optimal numerical conditioning and convergence during training, the input sequences are universally normalized to zero mean and unit variance using Python standard scaling. The regression is executed by a fully-connected feed-forward Neural Network featuring two hidden layers consisting of $100$ and $50$ neurons, respectively, utilizing ReLU activation functions. The optimization minimizes the Mean Squared Error (MSE) leveraging the Adam stochastic gradient descent algorithm with an initial learning rate optimally tuned to $\eta = 0.01$. The optimizer enforces convergence across a maximum budget of $2000$ epochs. For the EQ-ROM, we set $m^{\rm{EQ}}_{1}=2N$, $m^{\rm{EQ}}_2=R$ and $m^{\rm{EQ}}_{3}=2R$.
\begin{figure}[h!]
    \centering
   \subfloat[accuracy-speedup]{
   \includegraphics[scale=0.35]{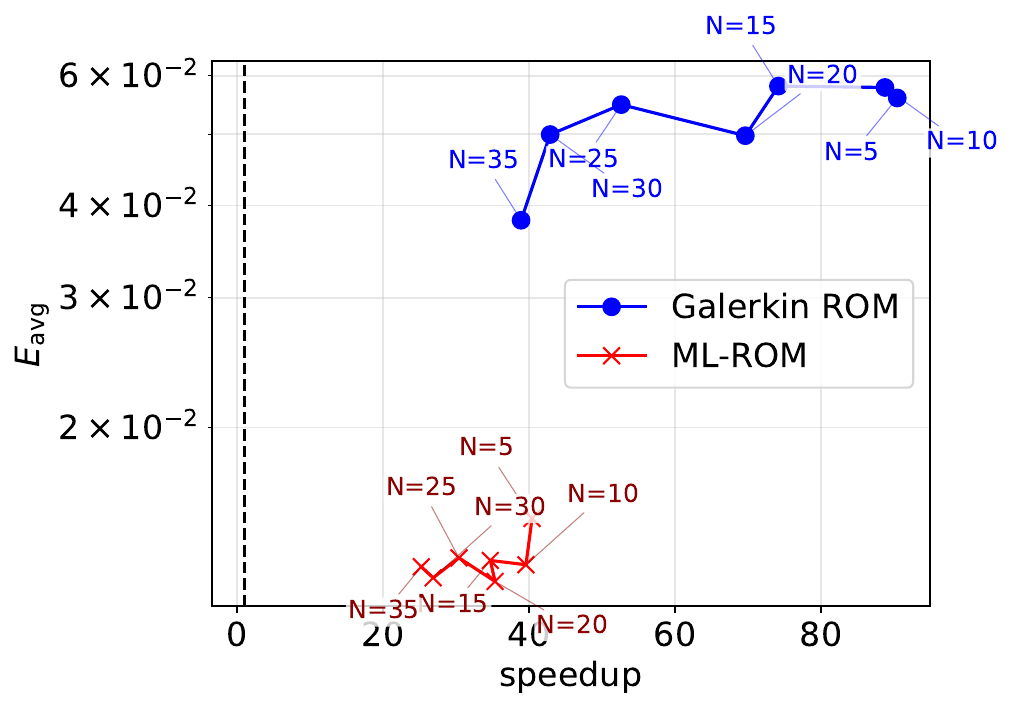}
   }
    \quad
    \subfloat[$\nu=0$]{
    \includegraphics[scale=0.24]{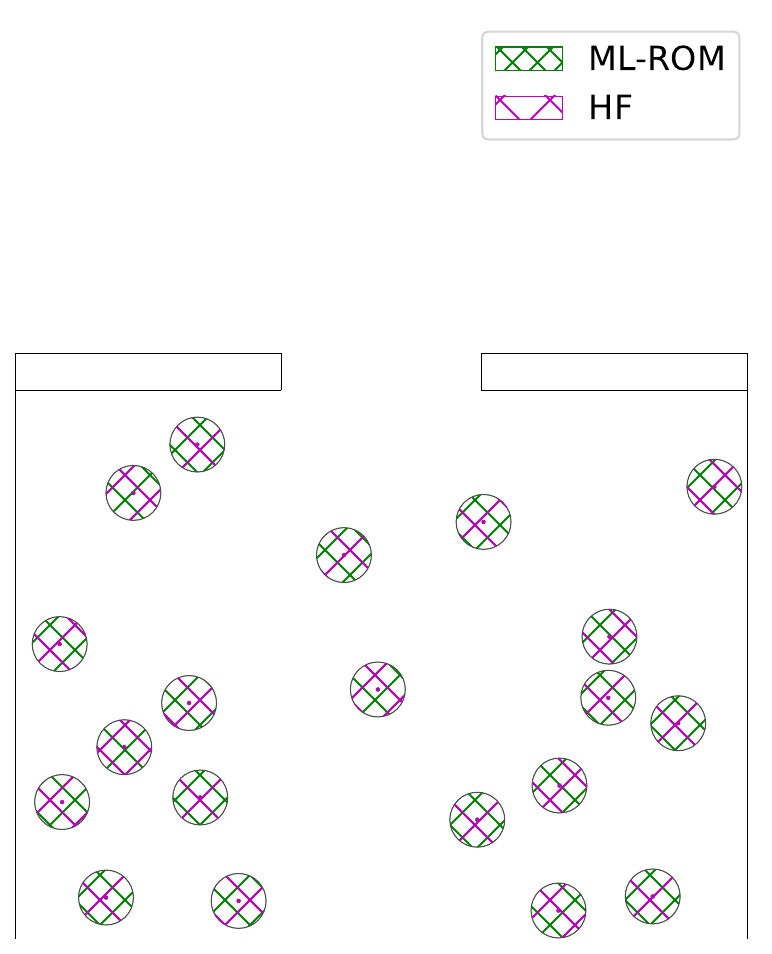}
    }
    \subfloat[$\nu=9$]{
    \includegraphics[scale=0.24]{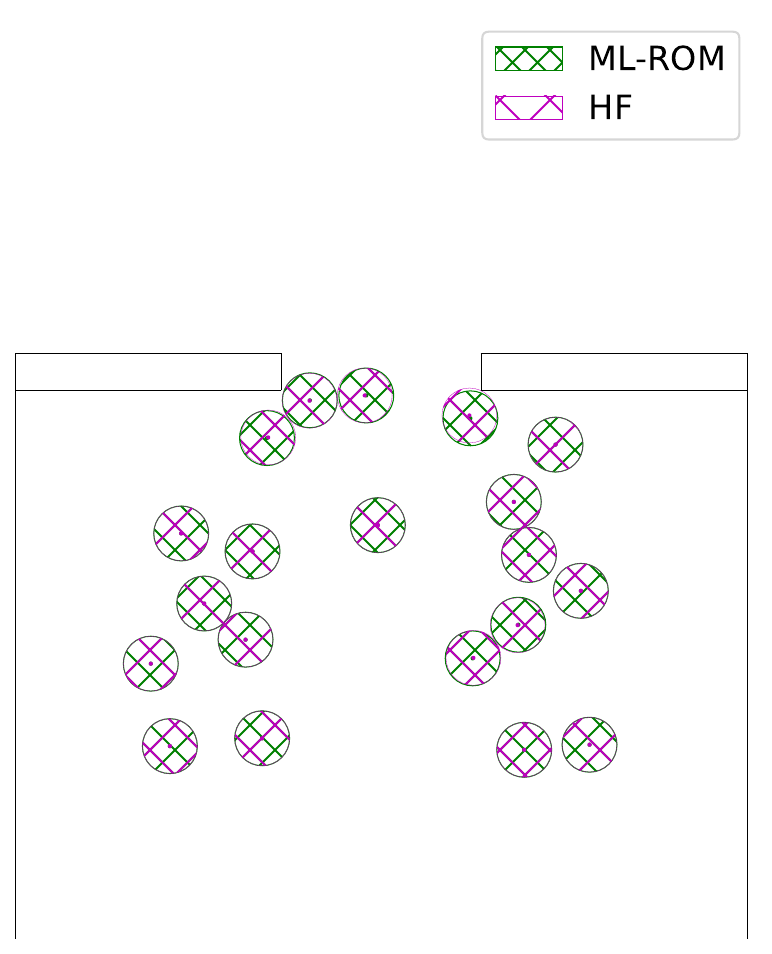}
    }
    \subfloat[$\nu=18$]{
    \includegraphics[scale=0.24]{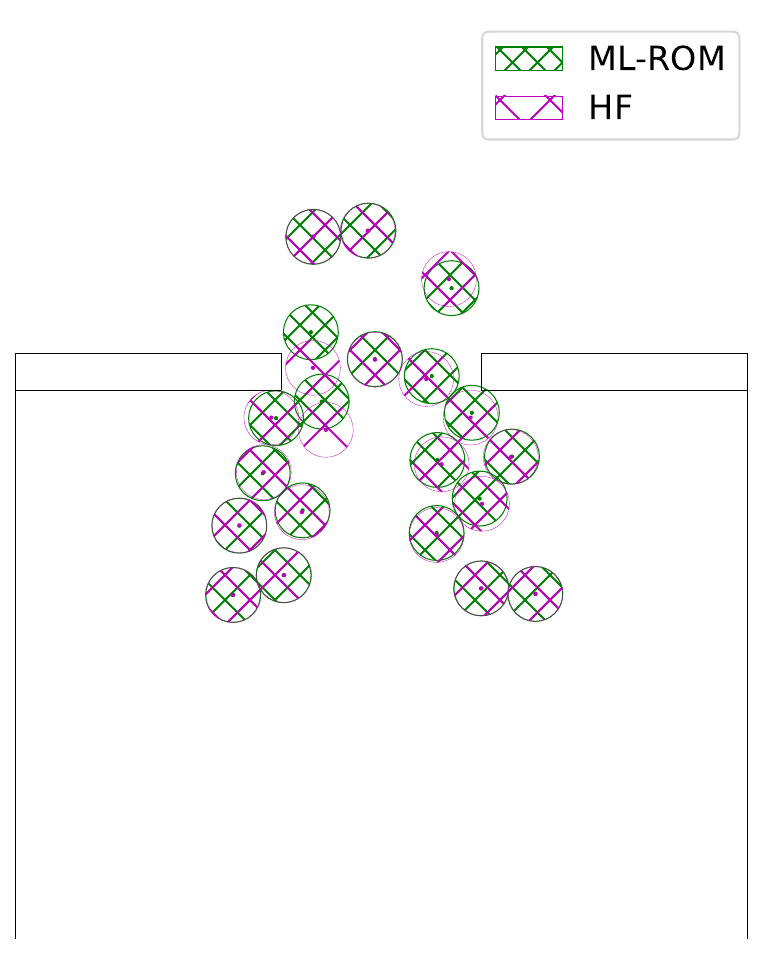}
    }
    \caption{ML corrected ROM. (a): accuracy-speedup comparison: ML-corrected Galerkin EQ ROM error compared with the baseline Galerkin EQ-ROM for several stable paris $(N,R)$ for $\mu \in \mathcal{P}_{\rm{valid}}$.  b),c),d): ML-corrected ROM positions for $n=20$ at times index $\nu \in \{0,9,18\}$.}
    \label{fig:ML_correction}
\end{figure}
We can observe that the ML-corrected ROM can be $3-5$ times more accurate than the baseline ROM, without adding any significant online overhead at the evaluation of the ML map. The ML-corrected ROM average speedup is approximately equal to $30$. The curves are constant in $n$ since the Galerkin ROM is solved for the fixed dimensions of $(\hat{V}_N, \hat{W}_{R}^{+})$ equal to $(40,65)$.
\subsection{A high-dimensional and highly-congested scenario}
\label{sec:scopi_sim}
\begin{wrapfigure}{r}{0.28\textwidth}
\includegraphics[scale=0.12]{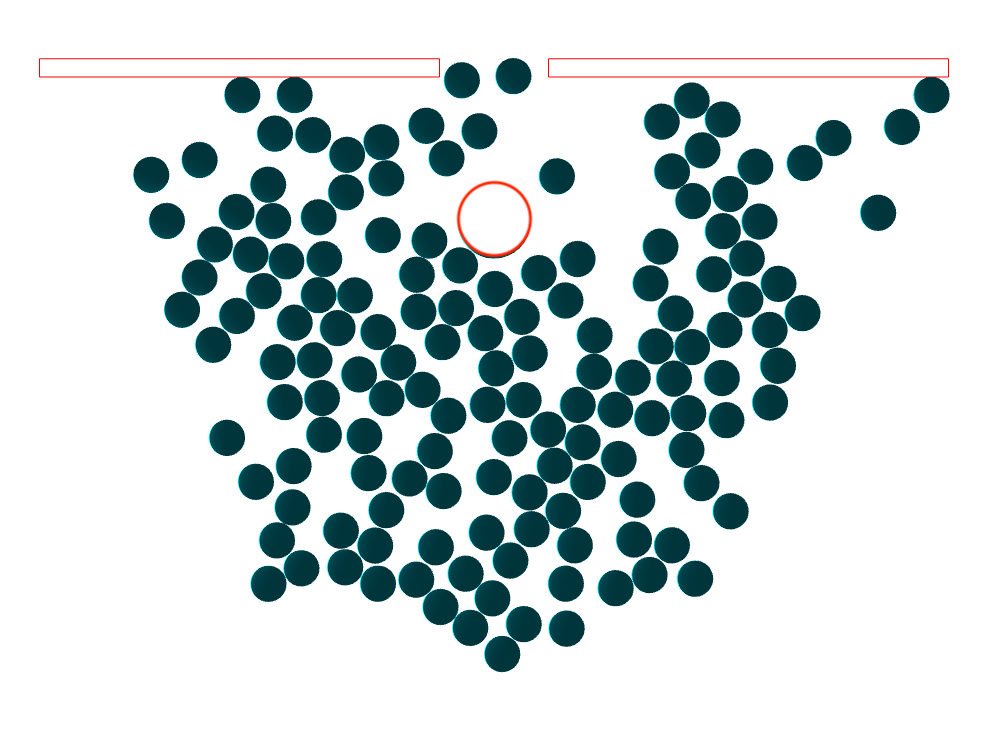}
\end{wrapfigure}
As a further numerical investigation, we show the applicability of the proposed approach to a higher-dimensional multi-particle system setting: we consider a highly congested situation of $N^{
\rm a}=150$ agents with multiple obstacles. 

In this case, the surface covered by the agents is approximately equal to the $50\%$ of the available room (in the previous case, the same ratio is approximately equal to the $12\%$). To make the study case even more challenging, we added a fixed spherical-shaped obstacle in the middle of the region where particles are initially randomly placed. 
We consider $2$ parameters also for this second study case, they represent, as in the previous study case, the exit width $l_{\rm{exit}}$ and the spontaneous velocity magnitude $c_{\boldsymbol{\upsilon}}$: the geometric parametric set is $\mathcal{P}_{\rm{train}}=[0.28,0.36]\times [4.5,5.5]$. To generate the high-fidelity datasets for particle positions, we rely on the optimized C++ code for large-scale multi-particles systems \href{https://lefebvre-lepot.perso.math.cnrs.fr/SCoPI/}{SCoPI} Simulations of Collections of Interacting Particles. We equipped this library with MOR routines. We set equal radius for all the particles $r^{\rm{a}}=0.05$ and the time step as $h=0.005$; Due to the complexity and long time horizon of the crowd dynamics in the region, we considered a fixed value of $N^T=50$ of observation time instances. We set $p_{\rm{train}}=200$ training-parameter samples, uniformly distributed in each parametric direction. At testing phase, $p_{\rm{test}}=50$ testing-parameter were considered instead, different from the training ones. 

\paragraph{Data-compression for high-dimensional data}
The computation of a suitable RB space for the positions for $\mu \in \mathcal{P}_{\rm{train}}$ introduces additional technical difficulties: when dealing with larger $\mathcal{N}$ and/or a high number of training parameters $p_{\rm{train}}$ or time instances $N^T$, the computational time and memory required by POD may become prohibitive. The case  $\textit{s}_{\rm{train}}=p_{\rm{train}}N^T  \gg N^T$ corresponds to all the study case in this work. Applying a classical POD to the full snapshots matrix for this high-dimensional test case would scale with $\mathcal{O}(\mathcal{N} s_{\rm{train}} \min\{\mathcal{N}, s_{\rm{train}}\})$. In order to speed up computations, we resort to randomized POD (rPOD). Randomized projection methods allow one to approximate the dominant POD modes of large matrices at a significantly reduced cost, typically scaling as $\mathcal{O}(\mathcal{N}s_{\rm{train}} c)$ with $c \ll \min\{\mathcal{N}, s_{\rm{train}}\}$ a small constant.
More precisely, instead of computing the singular value decomposition of the snapshot matrix $S^{u}=\{ \mathbf{u}(\nu,\mu) \}_{(\nu,\mu)\in \mathcal{S}_{\rm train}} \in \mathbb{R}^{\mathcal{N}\times s_{\rm train}}$, we use a randomized POD (rPOD) approach to approximate its dominant column space. Following the randomized range finding procedure introduced in \cite{Halko2011}, the matrix $S^{u}$ is first sampled using a Gaussian random matrix $\hat{\Omega} \in \mathbb{R}^{s_{\rm train}\times (n+c)}$, where $c$ is a small oversampling parameter, yielding $Y = S^{u}\hat{\Omega} $.
A \emph{QR} factorization $Y=\hat{Q}\hat{R}$ provides an orthonormal matrix $\hat{Q} \in \mathbb{R}^{\mathcal{N}\times (N+c)}$ that approximates the range of $S^{u}$. The matrix $S^{u}$ is then projected onto the reduced subspace represented by $\hat{Q}$ and a classical singular value decomposition is performed on the much smaller matrix $\hat{Q}^T S^{u}$ to obtain an approximation of the dominant POD modes. This procedure significantly reduces the computational cost while providing an accurate approximation of the POD basis (for further details about rPOD see \cite{Halko2011,Martinsson2020}).\\
In Figure \ref{fig:rpod}(a) we show the CPU time in $[s]$, as a function of $N$, required by the exact POD and rPOD to compute the POD space for velocities: the CPU time required by POD is not affected by $N$, while the one required by rPOD increases with $N$. For $N=100$, using rPOD allows a speedup approximately equal to $500$ with respect to POD. As a sanity check, in Figure \ref{fig:rpod}(b) we depict the eigenvalues decay associated with both POD and rPOD. We can observe that the two curves are coincident at list until $N=300$ which is equal to the number of rows of matrix $S^{u}$, equal to $2N^{\rm{a}}$: this shows that rPOD captures the dominant subspace. We remark that the exact POD is based on the method of snapshots, based on the exact SVD of the matrix ${S^{u}}^T S^{u}$. Figure \ref{fig:rpod} illustrates the convergence history of the gIS algorithm, measured in terms of the maximum and average projection errors of the Lagrange multipliers. The slow decay confirms the limited efficiency in approximating the contact manifold for this more complex study case.\\
\begin{figure}[h!]
	\centering
	\subfloat[CPU time required by POD and rPOD for increasing basis dimensions denoted by $N$.]{
	\includegraphics[scale=0.28]{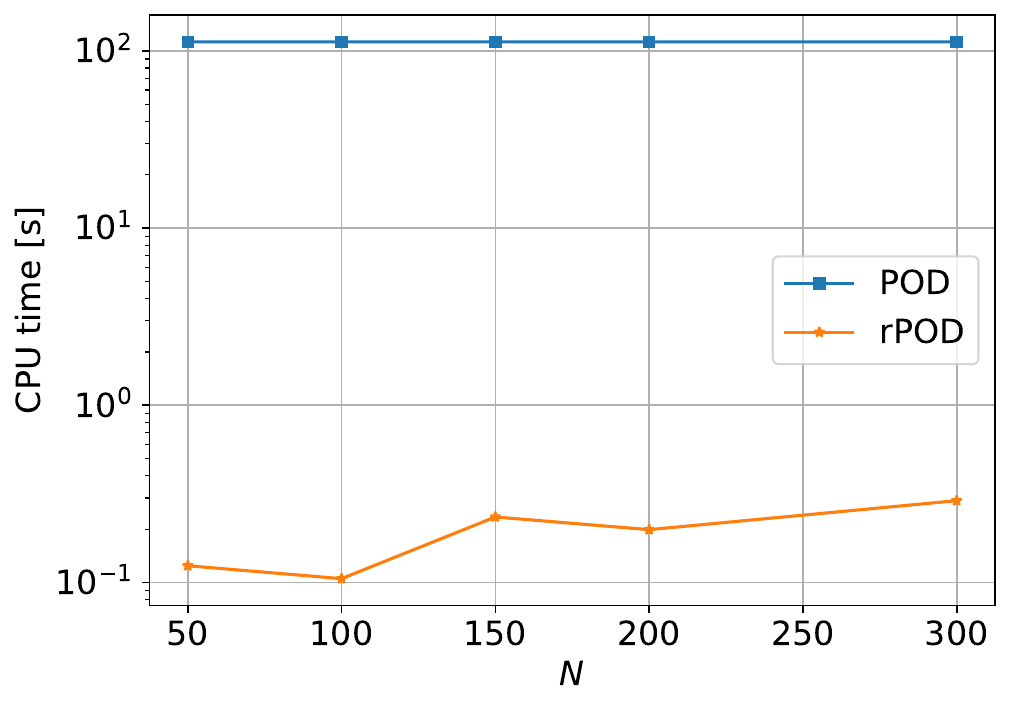}
}
\hspace{0.5cm} 
\subfloat[Eigenvalues decay associated with POD and rPOD for increasing basis dimensions. ]{
	\includegraphics[scale=0.28]{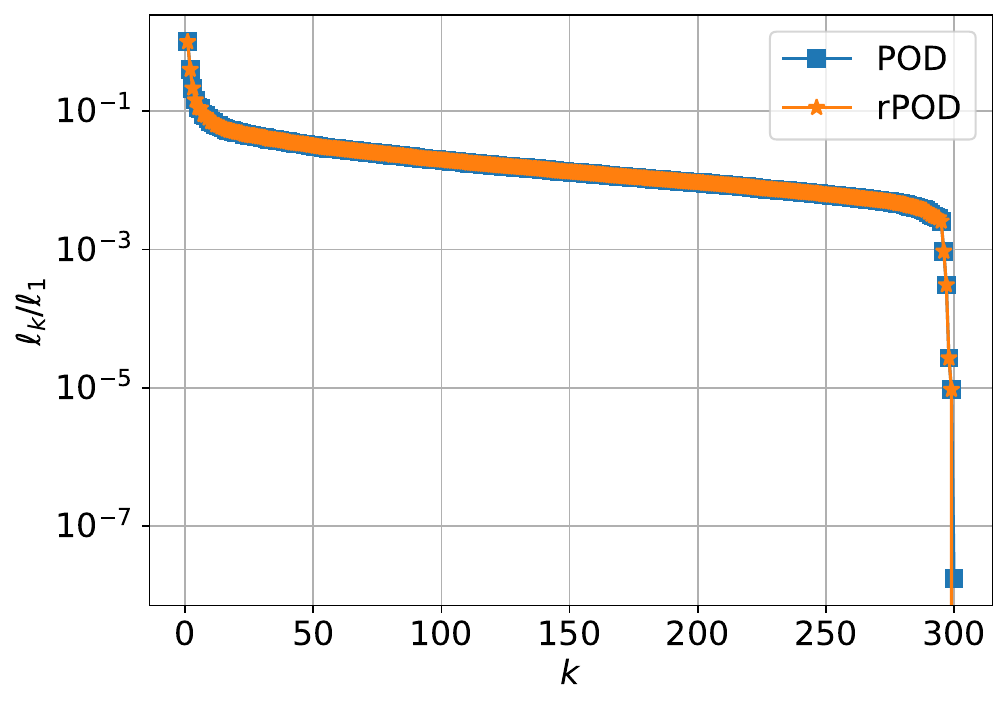}
}
\quad
\subfloat[gIS on Lagrange multipliers]{
\includegraphics[scale=0.28]{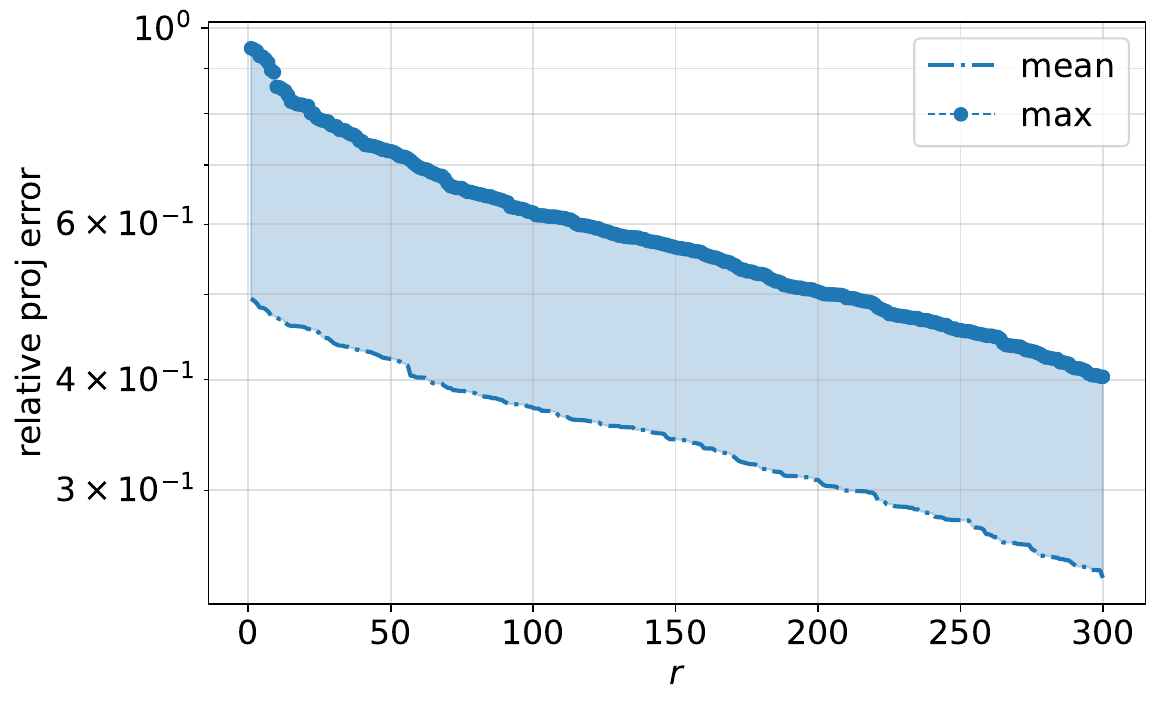}
}
	\caption{A high-dimensional and highly-congested scenario. Randomized POD (rPOD) performance vs exact POD on the velocities snapshots. (a): CPU times; (b): eigenvalues decay. c) gIS on Lagrange multipliers: maximum projection errors vs $r$.}
	\label{fig:rpod}
\end{figure}

\paragraph{Galerkin ROM and ML-corrected ROM}
We remark that while in the previous study case ($N^{\rm a}=20$) the flow is sparser, in this highly-congested and high-dimensional study case, the sheer density of particles physically forces the agents to pack together. The system results in being over-constrained. We employed PGA algorithm to drive the maximum projection residual under the target tolerance $\delta=10^{-4}$. The reduced inf-sup constant results in being $0$ since the multipliers have null space. The ROM in SCoPI utilizes an iterative optimization based solver (in particular an accelerated projected gradient with Nesterov acceleration), finding a valid set of contact forces and allowing the computation of the physical trajectories.

In Figure \ref{fig:scopi_galerkin} (a),(b) we show the ROM and ML-corrected ROM initial particle configurations. The Galerkin ROM is constructed with $(N,R)=(50,75)$ by using the rPOD + PGA and the gIS algorithms. In Figure  \ref{fig:scopi_galerkin} (c),(d) we depict the particles at a selected time step $\nu=16$. We observe that by employing the projection-based ROM, the non inter-penetration constraint is violated for several particle–particle and particle–obstacle interactions in the vicinity of the obstacles. This issue arises from the insufficient number of basis vectors used in the construction of the reduced cone $\hat{W}^{+}$ and of the reduced primal space. On the contrary, the ML-corrected ROM positions in Figure (d) are much better predicted: the accuracy of the particle trajectories is reported in Figure \ref{fig:scopi_galerkin} (a), where the relative error is depicted for the same ROM spaces dimensions. In this case, we are not computing the speedup, as the Galerkin ROM is not equipped with hyper-reduction in SCoPI software. The integration of hyper-reduction techniques into the SCoPI software is deferred to further work. We expect the speedup to increase once the ROM is enhanced with an Empirical Quadrature procedure---as shown for the case $N^{\rm{a}}=20$ described at the beginning of Section \ref{sec:numres_hyper}.
\begin{figure}[h]
    \centering
    \includegraphics[width=0.5\linewidth]{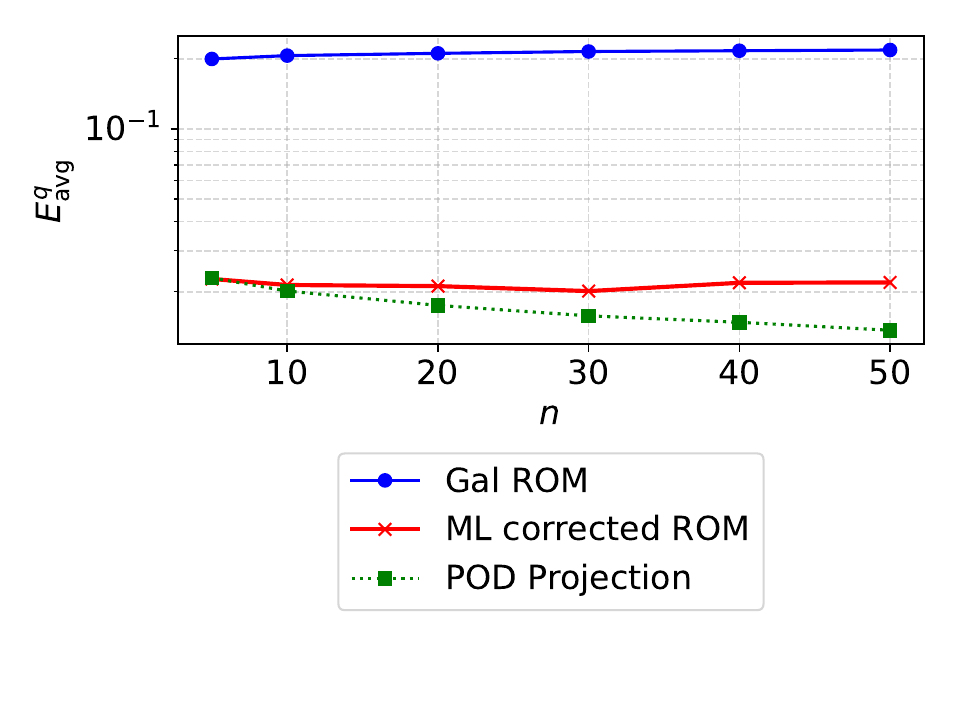}
    \quad 
    \subfloat[$\nu=0$]{
    \includegraphics[scale=0.25]{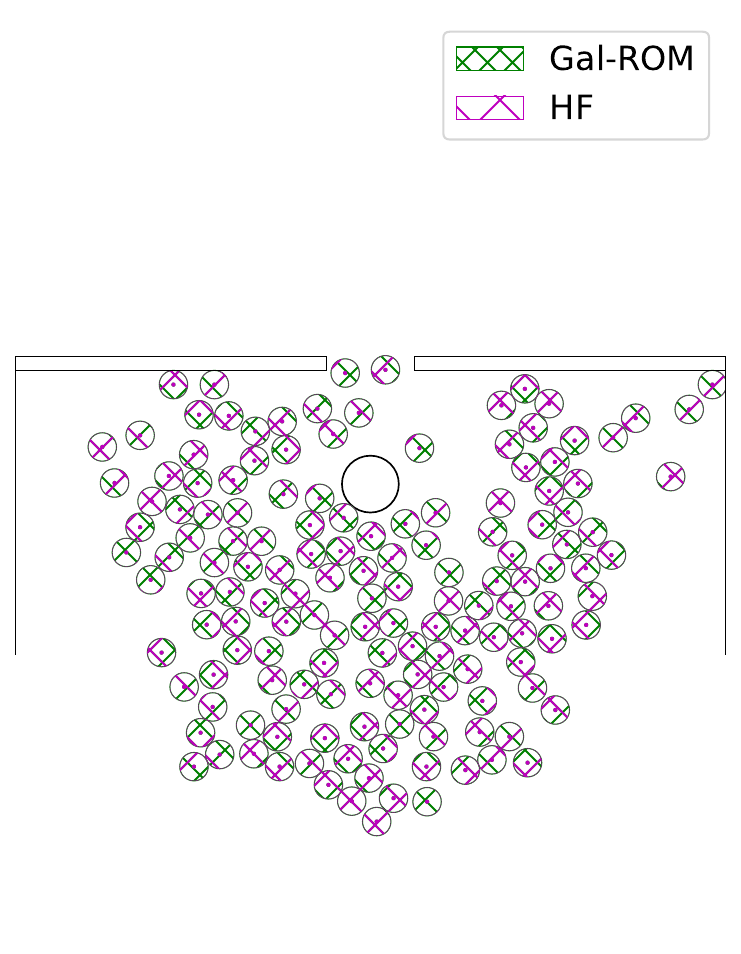}
    }
    \subfloat[$\nu=0$]{
    \includegraphics[scale=0.25]{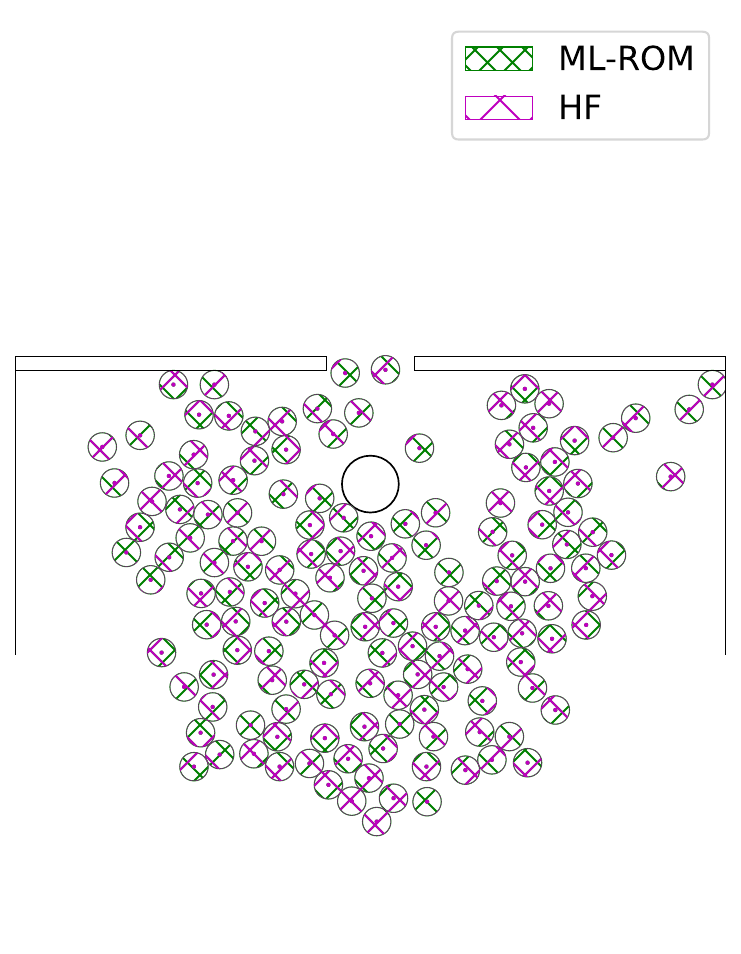}
    }
    \subfloat[$\nu=16$]{
    \includegraphics[scale=0.25]{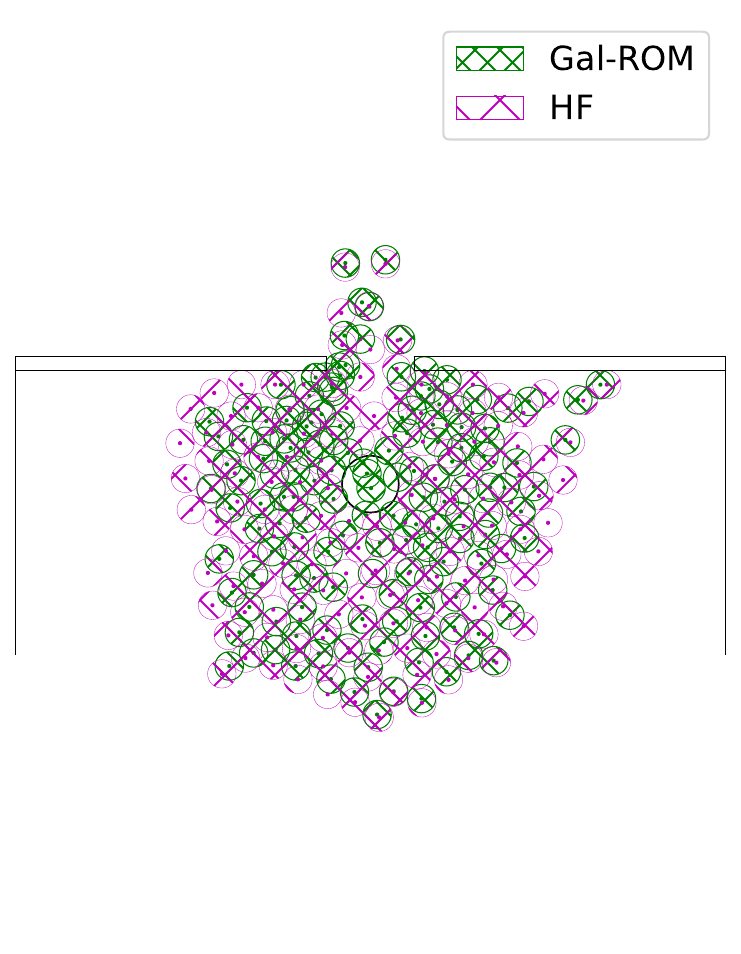}
    }
    \subfloat[$\nu=16$]{
    \includegraphics[scale=0.25]{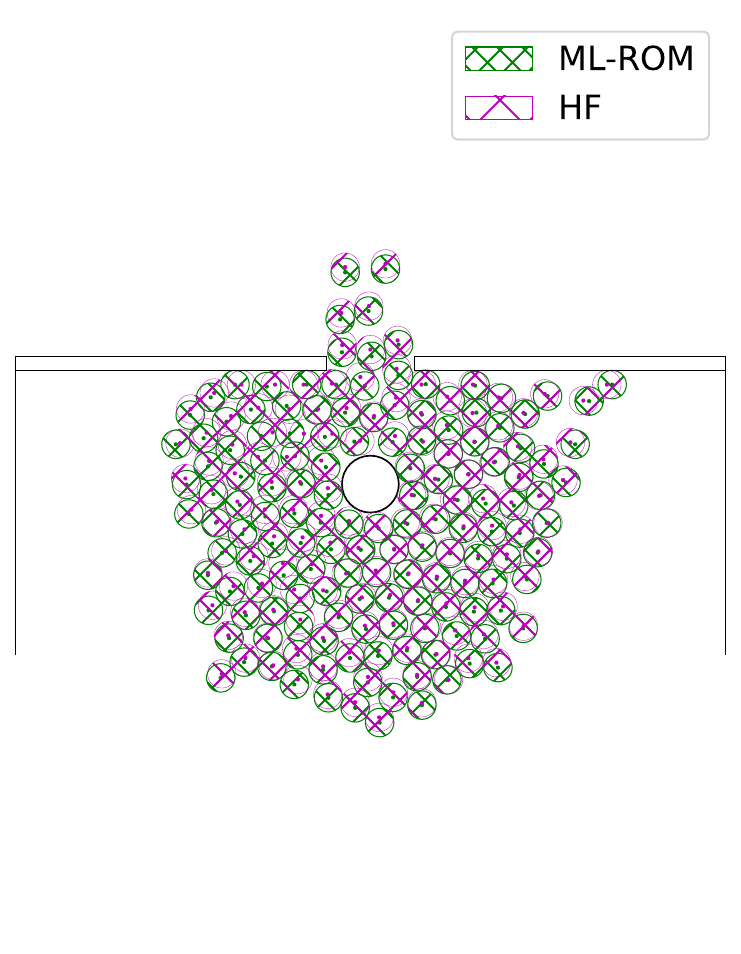}
    }
    \caption{A high-dimensional and highly-congested scenario.  a):  validation error of the ML-corrected ROM, compared with the non-corrected Galerkin ROM error and the solution projection errors. b) and c): Galerkin ROM and ML-corrected ROM particles at a time instant $\nu=0$. d) and e) : same comparisons for $\nu=16$. The ROM has been constructed with $(N,R)=(50,75)$. }
    \label{fig:scopi_galerkin}
\end{figure}

\section{Concluding remarks}
\label{sec:conclusions}
We introduced a nonlinear model reduction approach for parametrized contact problems governed by variational inequalities in a Lagrangian framework. These problems are challenging due to slow Kolmogorov n-width decay and temporal non-smoothness in velocities and contact dynamics. To address this, we developed a projection-based hyper-reduced ROM that preserves non-negativity of Lagrange multipliers and inf-sup stability across training parameters. Numerical experiments demonstrated the method’s effectiveness in highly congested multi-agent scenarios: we also showed that a machine learning correction improves accuracy without sacrificing computational efficiency. This work represents, to our knowledge, a first application of model order reduction to discrete contact problems of this type. Future work includes advanced nonlinear MOR techniques (e.g., optimal transport or morphing), as well as spatial and temporal decomposition strategies to enhance Galerkin ROM performance. Extensions to more complex systems—such as heterogeneous particles, richer geometries, second-order constraint approximations (\cite{bloch2023convex}), and granular material models (see \cite{moreau1996numerical} for frictionless contact dynamics and \cite{verdon2010contact} for the coupling with fluids)—are also of interest, along with parameter estimation and control applications.

\begin{acknowledgements}
Funded/Co-funded by the European Union (ERC, HighLEAP, 101077204).\\
The authors thank Aline Lefebvre-Lepot and Loïc Gouarin for their insightful feedback on the manuscript and for their guidance on the use and implementation of preliminary model order reduction routines within the \href{https://lefebvre-lepot.perso.math.cnrs.fr/SCoPI/}{SCoPI} library.
\end{acknowledgements}

%
%
\appendix
\section{A theoretical study on greedy algorithm for Hertz problem}
\label{sec:hertz_appendix}
\subsection{Greedy algorithm}
We consider the \textit{Hertz} benchmark test of a frictionless contact mechanics problem involving a sphere of radius $R$ and a half-plane of an elastic material, depicted in Figure \ref{fig:hertz_contact}. We consider the normal stress at any point of a sphere touching the half-plane when a load is applied on a certain point, with a maximum
contact pressure $p$. The deformation of the half-plane is denoted as $d$.
We indicate as $a \in [\underline{a}, \overline{a}]$ the radius of the contact surface. We consider the following family of functions in $\mathcal{W}=\mathbb{R}$, representing the distribution of normal pressure in the contact area as a function of distance from the center of the sphere ($r=0$) and parametrized by $a$:
\begin{equation*}
\lambda_a(r)=p\sqrt{1 -\frac{r^2}{a^2}} \mathbb{I}_{\{|r| \leq a\}}(r)
    \label{eq:pressure_profile}
\end{equation*}
\begin{figure}[h]
\centering
\begin{tikzpicture}[scale=0.62]

\draw[dashed, thick, gray]
    (-3.2, 0) -- (3.2, 0);

\draw[thick, black]
    (-3.2, 0)
    .. controls (-1.5, 0) and (-0.9, 0) ..
    (-0.7, -0.08)
    .. controls (-0.5, -0.18) and (-0.3, -0.28) ..
    (0,    -0.32)
    .. controls ( 0.3, -0.28) and ( 0.5, -0.18) ..
    ( 0.7, -0.08)
    .. controls ( 0.9,  0.0) and ( 1.5,  0.0) ..
    ( 3.2,  0);

\foreach \x in {-3.0,-2.7,...,3.0}{
    \draw[gray!60, thin] (\x, -0.55) -- ++(0.25,-0.25);
}
\draw[thick] (-3.2,-0.55) -- (3.2,-0.55);

\shade[ball color=gray!30, opacity=0.9]
    (0, 2.18) circle (2.2cm);
\draw[thick] (0, 2.18) circle (2.2cm);

\draw[->, very thick, black]
    (0, 2.18) -- (0, 0.5)
    node[midway, right=2pt, font=\large]{$F$};

\draw[<->, thick]
    (0, -0.13) -- (0.72, -0.08)
    node[midway, below=0.5pt, font=\large]{$a$};

\draw[thick] (0,   -0.18) -- (0,   -0.08);
\draw[thick] (0.72,-0.13) -- (0.72,-0.03);

\draw[<->, thick]
    (2.8, 0) -- (2.8, -0.55)
    node[midway, right=2pt, font=\large]{$d$};

\draw[dashed, gray]  (0,   0)    -- (2.85,  0);
\draw[dashed, gray]  (0,  -0.55) -- (2.85, -0.55);

\node[font=\normalsize] at (-3.0, 0.2) {$-R_0$};
\node[font=\normalsize] at ( 3.0, 0.2) {$R_0$};

\draw[thick, gray!70]
    (0, 2.18) -- (1.56, 3.74)
    node[midway, above left=1pt, font=\large, text=gray!80]{$R$};

\node[font=\normalsize, below=2pt] at (0, -0.35) {$r_0=0$};

\node[font=\normalsize] at (0.36, 0.18) {$\lambda(r)$};

\draw[thick, purple!70!black, -]
    (-0.72, 0)
    .. controls (-0.5, 0.45) and (-0.2, 0.65) ..
    (0,    0.72)
    .. controls ( 0.2, 0.65) and ( 0.5, 0.45) ..
    ( 0.72, 0);
\draw[dashed, purple!60]
    (-0.72, 0) -- (0.72, 0);

\end{tikzpicture}
\caption{Hertz contact between a sphere of radius $R$ and an elastic
half-plane. A normal load $F$ is applied, producing a contact patch
of radius $a$, an indentation depth $d$, and a pressure distribution
$\lambda(r)$ on $[-a,a]$.
The domain of the half-plane is $[-R_0, R_0]$.}
\label{fig:hertz_contact}
\end{figure}
We describe the first steps in the construction of the positive cone $\mathcal{\hat{W}}^{+}=\text{span}^{+}\{\chi_1, \ldots, \chi_r\}$ using a standard greedy procedure. 
We denote by $\|\cdot \|_{\mathcal{W}}$ the $L^2(\mathbb{R})$ norm and by $<\cdot, \cdot>_{\mathcal{W}}$ the $L^2(\mathbb{R})$ inner product.
We consider a one-dimensional parameter set, with $\sigma=a$ (since the problem is steady, we can identify $\sigma$ with $\mu$). We further assume that $r_0$ is fixed. The parameter vector is $\sigma=(a,p)\in\mathcal{S}=[\underline{a},\overline{a}]\times[\underline{p},\overline{p}]$. The snapshot manifold becomes \[\mathcal{M}^{\lambda}= \Bigl\{\lambda_{a,p}(r)  = p\sqrt{1-r^2/a^2}\,\mathbb{I}_{\{|r|\leq a\}},\quad (a,p)\in\mathcal{S}\Bigr\}.\]
\begin{subequations}
The pressure profile factorizes as
\begin{equation}
\lambda_{a,p}(r) = 
p\cdot\varphi_{a}(r),
\varphi_{a}(r) := \sqrt{1-r^2/a^2}\,\mathbb{I}_{\{|r|\leq a\} }\label{eq:separability}
\end{equation} 
\paragraph{Step $1$.}
We have
\[
  \|\lambda_{a,p}\|^{2}_{\mathcal{W}}
  = p^{2}\int_{-a}^{a}\!\Bigl(1-\frac{r^{2}}{a^{2}}\Bigr)\,dr
  = \frac{4ap^{2}}{3},
\]
which is strictly increasing in both $a$ and $p^{2}$. Hence the maximum over
$\mathcal{S}$ is attained at $(\overline{a},\overline{p})$, and the
greedy initialization yields
\[
  \sigma_{1}^{\star} = (\overline{a},\,\overline{p}),
  \qquad
  \psi_{1} = \lambda_{\overline{a},\overline{p}}
           = \overline{p}\sqrt{1-r^{2}/\overline{a}^{2}}.
\]

\paragraph{Step $2$.}
We seek the parameter that is least well approximated by $\psi_{1}$:
\[
  \sigma_{2}^{\star}
  \;\in\;
  \underset{(a,p)\in\mathcal{S}}{\arg\max}\;
  \|\lambda_{a,p} - \texttt{P}_{\psi_{1}}\lambda_{a,p}\|^{2}_{\mathcal{W}}
  \;=\;
  \underset{(a,p)\in\mathcal{S}}{\arg\max}\;
  \min_{k\geq 0}\|\lambda_{a,p}-k\psi_{1}\|^{2}_{\mathcal{W}}.
\]
Expanding the squared norm gives three integral components:
\begin{itemize}
\item[$\circ$] $I_{1}(a,p)
        =\|\lambda_{a,p}\|^{2}_{\mathcal{W}}
        =\dfrac{4ap^{2}}{3}$;
\item[$\circ$]$I_{2}
        =\|\psi_{1}\|^{2}_{\mathcal{W}}
        =\dfrac{4\overline{a}\,\overline{p}^{2}}{3}$\quad (from Step~1);
\item[$\circ$]$I_{3}(a,p)
        =\langle\lambda_{a,p},\psi_{1}\rangle_{\mathcal{W}}
        = p\,\overline{p}
          \displaystyle\int_{-a}^{a}
          \sqrt{1-\frac{r^{2}}{a^{2}}}\,
          \sqrt{1-\frac{r^{2}}{\overline{a}^{2}}}\,dr
        =: p\,\overline{p}\,J(a)$,
\end{itemize}
where, using the substitution $r=a\sin\theta$,
\begin{equation}
  J(a)
  = a\int_{-\pi/2}^{\pi/2}
    \cos^{2}\!\theta\,
    \sqrt{1-\frac{a^{2}}{\overline{a}^{2}}\sin^{2}\!\theta}
    \;d\theta
  \label{eq:elliptic}
\end{equation}
is a complete elliptic integral that depends only on the shape parameter $a$.
Since both $\lambda_{a,p}$ and $\psi_{1}$ are non-negative, $I_{3}>0$ for all
$(a,p)\in\mathcal{S}$, and the optimal positive projection coefficient is
\begin{equation}
  k_{1}^{\star}(a,p)
  = \frac{I_{3}(a,p)}{I_{2}}
  = \frac{3p\,J(a)}{4\,\overline{a}\,\overline{p}}
  \;\geq\;0.
  \label{eq:optimal_k_2d}
\end{equation}
The minimum residual is therefore
\begin{equation}
  \mathcal{F}(a,p)
  = I_{1}(a,p) - \frac{I_{3}(a,p)^{2}}{I_{2}}
  = \frac{4ap^{2}}{3} - \frac{3\,p^{2}\overline{p}^{2}J(a)^{2}}{4\,\overline{a}\,\overline{p}^{2}}
  = p^{2}\,G(a),
  \label{eq:residual_2d}
\end{equation}
where we define
\begin{equation}
  G(a) := \frac{4a}{3} - \frac{3\,J(a)^{2}}{4\,\overline{a}}.
  \label{eq:G}
\end{equation}

\begin{proposition}
  The second selected parameter is $\sigma_{2}^{\star}=(\underline{a},\overline{p})$,
  and the greedy selected function is
  \[
    \psi_{2} = \lambda_{\underline{a},\overline{p}}
             = \overline{p}\sqrt{1-r^{2}/\underline{a}^{2}}.
  \]
\end{proposition}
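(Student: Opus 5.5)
The plan has two parts: first fix the load at $p=\overline{p}$, then compare the contact radii through the monotonicity of $G$. Both rely on the factorized residual \eqref{eq:residual_2d}, $\mathcal{F}(a,p)=p^{2}G(a)$.

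\emph{The $p$-direction.} By Cauchy--Schwarz, $I_3^2\le I_1I_2$, so $G(a)\ge 0$ for every $a\in[\underline a,\overline a]$. Equality holds only when $\varphi_a$ is proportional to $\varphi_{\overline a}$, that is, only at $a=\overline a$. Hence $G>0$ for $a<\overline a$, and for each such fixed $a$ the map $p\mapsto p^2G(a)$ is strictly increasing on $[\underline p,\overline p]$. Any maximizer of $\mathcal F$ therefore has $p=\overline p$. The points with $a=\overline a$ are never selected, since there $\mathcal F=0$ while $\mathcal F>0$ elsewhere. The maximization thus reduces to maximizing $G$ over $[\underline a,\overline a]$.

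\emph{The $a$-direction.} Here I would prove that $G$ is decreasing on $[\underline a,\overline a]$, so that its maximum sits at the left endpoint $\underline a$. Differentiating \eqref{eq:G} gives
\[
G'(a)=\frac43-\frac{3}{2\overline a}\,J(a)J'(a).
\]
It therefore suffices to show $J(a)J'(a)\ge \tfrac{8\overline a}{9}$ on the interval. Writing $t=a/\overline a$, I would express $J$ through complete elliptic integrals: $J(a)=\overline a\,t\,\Phi(t)$ with $\Phi(t)=\int_{-\pi/2}^{\pi/2}\cos^2\theta\sqrt{1-t^2\sin^2\theta}\,d\theta$. Then $J'(a)=\Phi(t)+t\Phi'(t)$, where $\Phi'(t)=-t\int\cos^2\theta\sin^2\theta/\sqrt{1-t^2\sin^2\theta}\,d\theta$. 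The endpoint checks are immediate: $\Phi(1)=4/3$ gives $J(\overline a)=4\overline a/3$, consistent with $G(\overline a)=0$. The key inequality would then be reduced to the one-variable statement $t\Phi(t)\bigl(\Phi(t)+t\Phi'(t)\bigr)\ge 8/9$, to be verified on the range of $t=a/\overline a$ using monotonicity of $\Phi$ together with standard bounds on $K(t)$ and $E(t)$.

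\emph{Main obstacle.} The hard step is this last one-variable inequality. As $t\to 0$ one has $\Phi(t)\to\pi/2$, so the left-hand side tends to $0$, and the inequality cannot hold uniformly down to $t=0$. The argument therefore has to exploit that the admissible radii form a bounded interval $[\underline a,\overline a]$ with $\underline a/\overline a$ bounded away from $0$, which is the physically relevant Hertz regime. I expect the inequality to hold only above a threshold ratio (roughly $\underline a/\overline a\gtrsim 0.4$), and I do not see how to avoid it. So as stated, with no assumption on $\underline a/\overline a$, I expect to prove the proposition only in that regime and cannot close it unconditionally.

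Assuming the inequality on the parameter range, $G$ is strictly decreasing there. Hence $\sigma_2^\star=(\underline a,\overline p)$, and the selected function is $\psi_2=\lambda_{\underline a,\overline p}=\overline p\sqrt{1-r^2/\underline a^2}$.
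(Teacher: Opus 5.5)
Your reduction is the same as the paper's: factor $\mathcal F(a,p)=p^2G(a)$, fix $p=\overline p$, then argue that $G$ decreases on $[\underline a,\overline a]$. The step you could not close is not a technical obstacle. The statement itself fails there, and you should present it that way rather than as an unfinished proof.

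Since $0\le G(a)\le\|\varphi_a\|_{\mathcal W}^2=4a/3$ and $G>0$ on $(0,\overline a)$, the absolute residual vanishes at both ends of $(0,\overline a)$ and must peak inside. Concretely, $J(a)J'(a)\sim\pi^2a/4$, so $G'(a)\to 4/3>0$ as $a/\overline a\to0$. Write $G(a)=\overline a\,g(t)$ with $g(t)=\frac{4t}{3}-\frac34 t^2\Phi(t)^2$. Numerically, the first zero of $g'$ is $t^\ast\approx0.39$. Whenever $\underline a<t^\ast\overline a$ you get $G'(\underline a)>0$, so $(\underline a,\overline p)$ is not even a local maximizer of $\mathcal F$. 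For instance, $\underline a=0.3\,\overline a$ gives $G(\underline a)\approx0.237\,\overline a<G(0.39\,\overline a)\approx0.249\,\overline a$, and the greedy selects an interior radius. The proposition therefore needs the extra hypothesis $\underline a\ge t^\ast\overline a$.

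The paper's argument passes over this point with the claim that $J'$ ``grows faster than $\sqrt a$''. That claim does not imply $G'<0$, and it is contradicted by the limit $G'(a)\to4/3$. Your computation locates exactly that gap.

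Even in the valid regime, your sketch is not yet a proof. Your target inequality $t\Phi(t)\bigl(\Phi(t)+t\Phi'(t)\bigr)\ge 8/9$ is an equality at $t=1$: $\Phi(1)=4/3$ and $\Phi'(1)=-\int_{-\pi/2}^{\pi/2}\cos\theta\sin^2\theta\,d\theta=-2/3$, so $G'(\overline a)=0$. Generic bounds on $K$ and $E$ will therefore not close it near $t=1$. Also, the non-strict inequality you wrote only gives that $G$ is non-increasing; strict decrease needs strict inequality on $(t^\ast,1)$.

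One workable route uses that $\Phi''(t)\to-\infty$ as $t\to1^-$, because the term $\int\cos^2\theta\sin^4\theta\,(1-t^2\sin^2\theta)^{-3/2}\,d\theta$ diverges logarithmically. The derivative of the left-hand side then tends to $-\infty$, so the left-hand side decreases to $8/9$ from above near $t=1$. The remaining compact range $[t^\ast,1-\epsilon]$ can be checked separately, for example by rigorous interval evaluation of the elliptic-integral formula for $\Phi$.
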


\begin{proof}
The factored form $\mathcal{F}(a,p)=p^{2}G(a)$ in~\eqref{eq:residual_2d}
decouples the maximization over the two parameters:
\begin{equation}
  \sigma_{2}^{\star}
  = \underset{(a,p)\in\mathcal{S}}{\arg\max}\;p^{2}G(a)
  = \Bigl(\underset{a\in[\underline{a},\overline{a}]}{\arg\max}\;G(a),\;
          \underset{p\in[\underline{p},\overline{p}]}{\arg\max}\;p^{2}\Bigr).
  \label{eq:decoupled}
\end{equation}

\emph{Maximization in $p$.}
Since $p\mapsto p^{2}$ is strictly increasing on $[\underline{p},\overline{p}]$,
the maximum is attained at $p^{\star}=\overline{p}$.

\emph{Maximization in $a$.}
$G$ is
strictly decreasing on $[\underline{a},\overline{a}]$: indeed,
$G(\overline{a})=0$ (since $k_{1}^{\star}(\overline{a},\cdot)=1$ recovers
$\psi_{1}$ exactly), while for $a<\overline{a}$ the profiles $\varphi_{a}$ and
$\varphi_{\overline{a}}$ are not proportional, so the Cauchy--Schwarz inequality
gives $J(a)<\sqrt{I_{1}(a,\cdot)/p^{2}\cdot I_{2}/\overline{p}^{2}}$ strictly,
and $G(a)>0$. Moreover, $J(a)$ is strictly increasing in $a$ (as seen from the
elliptic-integral representation~\eqref{eq:elliptic}) and its derivative grows
faster than $\sqrt{a}$, so $G'(a)<0$ for all $a\in[\underline{a},\overline{a})$.
Therefore $G$ attains its maximum at $a^{\star}=\underline{a}$.

Combining both optimizations via~\eqref{eq:decoupled} yields
\[
  \sigma_{2}^{\star} = (\underline{a},\,\overline{p}),
\]
and the corresponding basis function is
\begin{equation}
  \psi_{2}
  = \lambda_{\sigma_{2}^{\star}}
  = \overline{p}\sqrt{1-r^{2}/\underline{a}^{2}}.
\end{equation}
\qed
\end{proof}
\end{subequations}
\begin{remark}
We note that these first two steps results, up to a normalization of the functions, can also be obtained by applying the modified cone-projected greedy method described in Algorithm\ref{alg:mCPG}: indeed, at Step $2$, one is required to solve the following constrained minimization problem:
\[
k^{\star}=\arg \min_{\substack{
    k \in \mathbb{R}_+ \\
    \lambda_{\sigma_2}-k \psi_1 \geq 0
}} \|\lambda_{\sigma_2}-k \psi_1 \|_{\mathcal{W}}^2 
\]
which gives $k^{\star}=0$ and thus $\psi_2=\lambda_{\sigma_2}/\|\lambda_{\sigma_2}\|_{\mathcal{W}}$.
\end{remark}
In Figure \ref{fig:hertz_cone}, we depict in red the selected functions $\psi_1$ and $\psi_2$: they lie entirely outside the blue cone for several intermediate values of $a$. No non-negative combination $k_1 \psi_1 + k_2 \psi_2$ can reproduce a smooth intermediate arch, so two basis functions are insufficient to construct $\hat{\mathcal{W}}^+$. Consequently, the greedy algorithm must continue selecting $\sigma_3$ at an interior $a \in [\underline{a}, \overline{a}]$ to capture the intermediate arch shapes. 

\begin{figure}[h]
\centering
\includegraphics[scale=0.48]{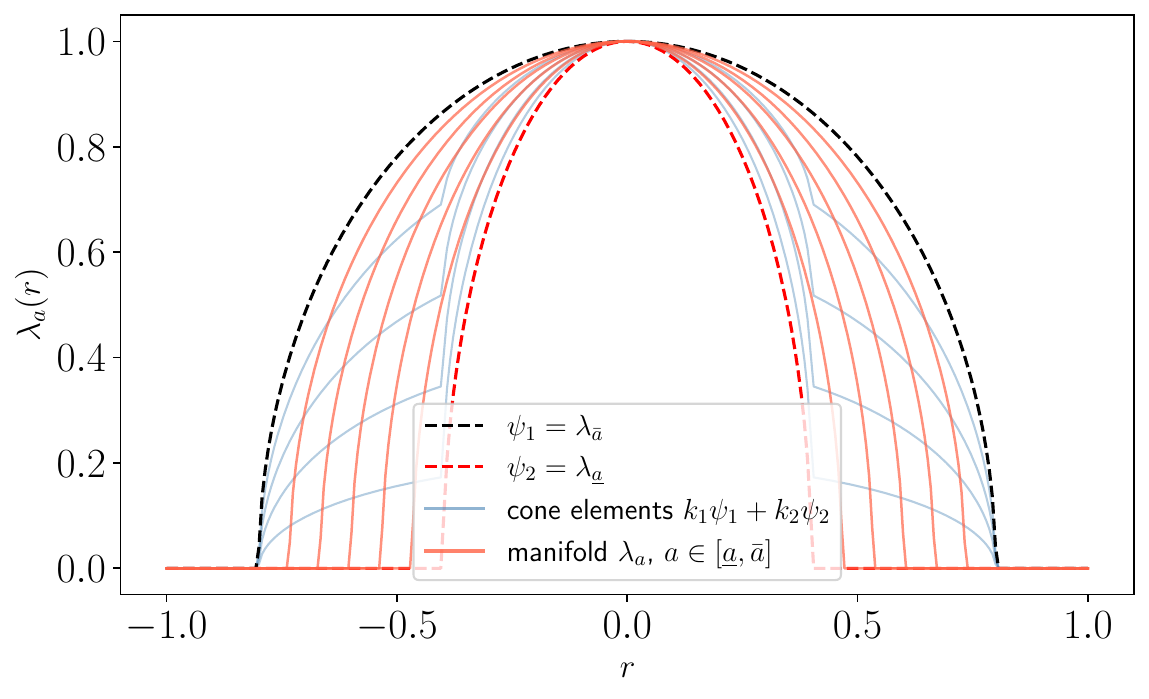}
\caption{In red: the manifold selected functions $\psi_1$ and $\psi_2$ after two greedy steps; in blue: the functions obtained by non-negative combination of the first two greedy functions; in orange: the manifold parametric solutions for $a \in [\underline{a}, \overline{a}]$ and  $\overline{p}=1$.}
\label{fig:hertz_cone}
\end{figure}
\subsection{Identification of the parameter from the reduced cone representation}
\begin{subequations}
In this section, we derive here explicit formulas for the projection coefficients, expressed in terms of complete elliptic integrals. We then show that the ratio of reduced coefficients
$\lambda_1(a,p)/\lambda_2(a,p)$ is strictly increasing in $a$, which implies the
identifiability of the parameter pair $(a,p)$ from the single pair $(\lambda_1(a,p),\lambda_2(a,p))$.\\
We recall that the first two basis functions selected by the greedy algorithm are
\[
\psi_1 = \lambda_{\overline{a},\overline{p}} = \overline{p}\,\varphi_{\overline{a}},
\qquad
\psi_2 = \lambda_{\underline{a},\overline{p}} = \overline{p}\,\varphi_{\underline{a}},
\]
where, for every $u>0$, we denote
\[
\varphi_u(r):=\sqrt{1-\frac{r^2}{u^2}}\,\mathbb{I}_{\{|r|\le u\}}(r),\qquad r\in\mathbb{R}.
\]
We thus have the identities
\[
\psi_1 = \overline{p}\,\varphi_{\overline{a}},
\qquad
\psi_2 = \overline{p}\,\varphi_{\underline{a}},
\qquad
\lambda_{a,p} = p\,\varphi_a,
\qquad (a,p)\in[\underline{a},\overline{a}]\times(0,\infty).
\]

For every $(a,p)\in[\underline{a},\overline{a}]\times(0,\infty)$, we consider the
minimization problem
\begin{equation}\label{eq:minimisation}
\min_{(\lambda_1,\lambda_2)\in\mathbb{R}_+^2}
\Bigl\|p\,\varphi_a-\lambda_1\,\overline{p}\,\varphi_{\overline{a}}
-\lambda_2\,\overline{p}\,\varphi_{\underline{a}}\Bigr\|_{L^2(\mathbb{R})}^2,
\end{equation}
and denote by $\bigl(\lambda_1(a,p),\lambda_2(a,p)\bigr)$ its unique minimizer.
\begin{remark}
Uniqueness follows from the strict convexity of the quadratic functional and the
fact that $\varphi_{\overline{a}}$ and $\varphi_{\underline{a}}$ are linearly independent in $L^2(\mathbb{R})$ whenever $\underline{a}<\overline{a}$.
\end{remark}
\begin{definition}
For $0<u\le v$, we set
\[
J(u,v):=\langle\varphi_u,\varphi_v\rangle_{L^2(\mathbb{R})}.
\]
\end{definition}

\begin{proposition}\label{prop:J-integral}
For all $0<u\le v$,
\begin{equation}\label{eq:J-def}
J(u,v)=\int_{-u}^{u}\sqrt{1-\frac{r^2}{u^2}}\sqrt{1-\frac{r^2}{v^2}}\,dr.
\end{equation}
Moreover, setting $\rho=u/v\in(0,1]$,
\begin{equation}\label{eq:J-elliptic}
J(u,v)
=
\frac{2u}{3\rho^2}
\Bigl[(\rho^2-1)\,K(\rho)+(\rho^2+1)\,E(\rho)\Bigr],
\end{equation}
where
\[
K(k)=\int_0^{\pi/2}\frac{d\theta}{\sqrt{1-k^2\sin^2\theta}},
\qquad
E(k)=\int_0^{\pi/2}\sqrt{1-k^2\sin^2\theta}\,d\theta
\]
are the complete elliptic integrals of the first and second kind, respectively.
Finally,
\[
J(u,u)=\frac{4u}{3}.
\]
\end{proposition}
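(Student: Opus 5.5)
The plan is to reduce $J(u,v)$ to a single trigonometric integral and then rewrite that integral in terms of $K$ and $E$ with elementary identities.

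\emph{Support and substitution.} Formula \eqref{eq:J-def} is immediate. Since $u\le v$, the support of $\varphi_u\varphi_v$ is $[-u,u]$, and on it both square roots are the ones written there. I then substitute $r=u\sin\theta$, $\theta\in[-\pi/2,\pi/2]$. This gives $dr=u\cos\theta\,d\theta$, $\sqrt{1-r^2/u^2}=\cos\theta$ and $r^2/v^2=\rho^2\sin^2\theta$. By parity,
\[
J(u,v)=2u\,I(\rho),\qquad I(\rho):=\int_0^{\pi/2}\cos^2\theta\,\Delta(\theta)\,d\theta,\qquad \Delta(\theta):=\sqrt{1-\rho^2\sin^2\theta},
\]
which is the representation \eqref{eq:elliptic} already used in Step 2.

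\emph{Reduction to $K$ and $E$ (case $\rho<1$).} I write $\cos^2\theta=1-\sin^2\theta$ and use $\rho^2\sin^2\theta=1-\Delta^2$. This gives
\[
\int_0^{\pi/2}\sin^2\theta\,\Delta\,d\theta=\frac{1}{\rho^2}\Bigl(E(\rho)-\int_0^{\pi/2}\Delta^3\,d\theta\Bigr),
\]
so $I(\rho)$ is determined once I know the classical reduction
\[
\int_0^{\pi/2}\Delta^3\,d\theta=\tfrac13\bigl[2(2-\rho^2)E(\rho)-(1-\rho^2)K(\rho)\bigr].
\]
This reduction is the main computational point, and I would prove it rather than cite it. I would differentiate $\sin\theta\cos\theta\,\Delta$ and integrate over $[0,\pi/2]$; the boundary terms vanish. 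Expressing every resulting term through $\int\Delta$, $\int\Delta^{-1}$ and $\int\Delta^3$, again with $\sin^2\theta=(1-\Delta^2)/\rho^2$, yields a linear relation that gives $\int\Delta^3$. Substituting back and collecting terms, I obtain
\[
I(\rho)=\frac{1}{3\rho^2}\bigl[(\rho^2+1)E(\rho)+(\rho^2-1)K(\rho)\bigr].
\]
Multiplying by $2u$ gives \eqref{eq:J-elliptic}.

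\emph{Diagonal case $\rho=1$.} Here $K$ diverges, so \eqref{eq:J-elliptic} has to be read as a limit; its value is cleanest to get directly. With $\Delta=\cos\theta$ we have $I(1)=\int_0^{\pi/2}\cos^3\theta\,d\theta=2/3$, hence $J(u,u)=4u/3$. This agrees with $\|\varphi_u\|^2$ from Step 1. It also agrees with the limit of \eqref{eq:J-elliptic}, since $E(1)=1$ and $(\rho^2-1)K(\rho)\to0$ because $K$ grows only logarithmically as $\rho\to1^-$. That justifies including $\rho=1$ in the statement.
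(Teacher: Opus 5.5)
Your proof is correct and takes the same route as the paper: substitute $r=u\sin\theta$ to get $J(u,v)=2u\int_0^{\pi/2}\cos^2\theta\sqrt{1-\rho^2\sin^2\theta}\,d\theta$, reduce this to $K$ and $E$, and check the diagonal case directly. The paper only invokes a ``standard reduction,'' whereas you carry it out (your identity for $\int_0^{\pi/2}\Delta^3\,d\theta$, obtained by differentiating $\sin\theta\cos\theta\,\Delta$, checks out), and you also rightly note that at $\rho=1$ the formula must be read as a limit because $K(1)=+\infty$.
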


\begin{proof}
Since $\varphi_u$ and $\varphi_v$ are supported on $[-u,u]$ and $[-v,v]$
respectively, and $u\le v$, identity \eqref{eq:J-def} is immediate. Setting
$r=u\sin\theta$ yields
\[
J(u,v)=2u\int_0^{\pi/2}\cos^2\theta\,\sqrt{1-\rho^2\sin^2\theta}\,d\theta,
\qquad \rho=\frac{u}{v}.
\]
Identity \eqref{eq:J-elliptic} then follows from a standard reduction to complete
elliptic integrals. The case $u=v$ is verified directly:
\[
J(u,u)=\int_{-u}^{u}\!\Bigl(1-\frac{r^2}{u^2}\Bigr)dr=\frac{4u}{3}.
\]
\end{proof}
\subsubsection{Explicit formulas for the projection coefficients}

We introduce the scalar quantities arising from the inner products of the shape
functions:
\[
\mathcal{A}:=\|\varphi_{\overline{a}}\|^2_{L^2}=\frac{4\overline{a}}{3},
\qquad
\mathcal{B}:=\|\varphi_{\underline{a}}\|^2_{L^2}=\frac{4\underline{a}}{3},
\qquad
\mathcal{C}:=\langle\varphi_{\underline{a}},\varphi_{\overline{a}}\rangle_{L^2}
            =J(\underline{a},\overline{a}),
\]
and, for $a\in[\underline{a},\overline{a}]$,
\[
\mathcal{X}(a):=\langle\varphi_a,\varphi_{\overline{a}}\rangle_{L^2}=J(a,\overline{a}),
\qquad
\mathcal{Y}(a):=\langle\varphi_{\underline{a}},\varphi_a\rangle_{L^2}=J(\underline{a},a).
\]
The Gram matrix entries of the pair $(\psi_1,\psi_2)$ are then
\[
\|\psi_1\|^2=\overline{p}^2\mathcal{A},
\qquad
\|\psi_2\|^2=\overline{p}^2\mathcal{B},
\qquad
\langle\psi_1,\psi_2\rangle=\overline{p}^2\mathcal{C}.
\]

\begin{lemma}\label{lem:gram}
The Gram matrix
\[
\mathbf{G}:=\overline{p}^2\begin{pmatrix}
\mathcal{A} & \mathcal{C}\\
\mathcal{C} & \mathcal{B}
\end{pmatrix}
\]
is positive definite. In particular,
\[
\mathcal{A}\mathcal{B}-\mathcal{C}^2>0.
\]
\end{lemma}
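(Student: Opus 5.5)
The Gram matrix of any family in a real inner-product space is positive semidefinite. It is positive definite exactly when the family is linearly independent. I would prove the lemma in two steps: first positive semidefiniteness via a quadratic-form computation, then strictness via linear independence of $\varphi_{\overline{a}}$ and $\varphi_{\underline{a}}$, assuming $\underline{a}<\overline{a}$ as in the preceding remark.

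\textbf{Step 1 (quadratic form).} For $(x,y)\in\mathbb{R}^2$,
\[
(x,y)\,\mathbf{G}\,(x,y)^T=\overline{p}^2\bigl(x^2\mathcal{A}+2xy\,\mathcal{C}+y^2\mathcal{B}\bigr)=\overline{p}^2\,\bigl\|x\varphi_{\overline{a}}+y\varphi_{\underline{a}}\bigr\|_{L^2}^2\geq 0,
\]
using bilinearity of $\langle\cdot,\cdot\rangle_{L^2}$ and the definitions of $\mathcal{A},\mathcal{B},\mathcal{C}$. Since $\overline{p}>0$, the form vanishes iff $x\varphi_{\overline{a}}+y\varphi_{\underline{a}}=0$ in $L^2(\mathbb{R})$.

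\textbf{Step 2 (linear independence).} Suppose $x\varphi_{\overline{a}}+y\varphi_{\underline{a}}=0$ almost everywhere. On the interval $(\underline{a},\overline{a})$, which has positive measure, $\varphi_{\underline{a}}$ vanishes while $\varphi_{\overline{a}}>0$. Hence $x=0$. Then $y\varphi_{\underline{a}}=0$, and $\varphi_{\underline{a}}>0$ on $(-\underline{a},\underline{a})$, so $y=0$. Therefore the form is positive definite.

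\textbf{Step 3 (determinant).} For a symmetric $2\times2$ matrix, positive definiteness implies a positive determinant, which gives
\[
\overline{p}^4\bigl(\mathcal{A}\mathcal{B}-\mathcal{C}^2\bigr)>0.
\]
Alternatively, one can apply the Cauchy--Schwarz inequality $\mathcal{C}^2\le\mathcal{A}\mathcal{B}$, with equality iff the two functions are proportional, which Step 2 excludes.

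There is no real obstacle. The only point needing care is that strictness requires $\underline{a}<\overline{a}$, which the setting assumes. The explicit elliptic-integral formulas of Proposition~\ref{prop:J-integral} are not needed.
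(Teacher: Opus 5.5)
Your proposal is correct and follows the same route as the paper: positive definiteness of the Gram matrix from the linear independence of $\varphi_{\overline{a}}$ and $\varphi_{\underline{a}}$ when $\underline{a}<\overline{a}$, then the determinant inequality. The paper simply asserts that independence, whereas you prove it with the support argument on $(\underline{a},\overline{a})$ and make explicit the needed assumption $\overline{p}\neq 0$.
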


\begin{proof}
Since $\varphi_{\overline{a}}$ and $\varphi_{\underline{a}}$ are linearly
independent in $L^2(\mathbb{R})$ for $\underline{a}<\overline{a}$, the family
$(\psi_1,\psi_2)=(\overline{p}\,\varphi_{\overline{a}},\overline{p}\,\varphi_{\underline{a}})$
is free, and its Gram matrix is positive definite. In particular,
$\mathcal{A}\mathcal{B}-\mathcal{C}^2>0$.
\qed
\end{proof}

\begin{proposition}\label{prop:lambdas-explicit}
For every $(a,p)\in[\underline{a},\overline{a}]\times(0,\infty)$, the minimizer of
problem \eqref{eq:minimisation} is given by
\begin{equation}\label{eq:lambda1-formula}
\lambda_1(a,p)
=
\frac{p}{\overline{p}}\,\frac{\mathcal{B}\,\mathcal{X}(a)-\mathcal{C}\,\mathcal{Y}(a)}
                             {\mathcal{A}\mathcal{B}-\mathcal{C}^2},
\end{equation}
\begin{equation}\label{eq:lambda2-formula}
\lambda_2(a,p)
=
\frac{p}{\overline{p}}\,\frac{\mathcal{A}\,\mathcal{Y}(a)-\mathcal{C}\,\mathcal{X}(a)}
                             {\mathcal{A}\mathcal{B}-\mathcal{C}^2}.
\end{equation}
In particular, at the endpoints of the parameter domain:
\[
\lambda_1(\overline{a},p)=\frac{p}{\overline{p}},\quad
\lambda_2(\overline{a},p)=0,
\qquad
\lambda_1(\underline{a},p)=0,\quad
\lambda_2(\underline{a},p)=\frac{p}{\overline{p}}.
\]
\end{proposition}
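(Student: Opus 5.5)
The plan is to treat \eqref{eq:minimisation} first as an unconstrained least-squares problem in $(\lambda_1,\lambda_2)\in\mathbb{R}^2$. After that, we check that the unconstrained minimizer has non-negative entries, so it is also the unique minimizer over $\mathbb{R}_+^2$.

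\emph{Step 1: normal equations.} Expand the functional using the Gram quantities $\mathcal A,\mathcal B,\mathcal C$ and $\langle p\varphi_a,\overline p\varphi_{\overline a}\rangle=p\overline p\mathcal X(a)$, $\langle p\varphi_a,\overline p\varphi_{\underline a}\rangle=p\overline p\mathcal Y(a)$. Setting the gradient to zero gives
\[
\overline p^2\begin{pmatrix}\mathcal A&\mathcal C\\ \mathcal C&\mathcal B\end{pmatrix}\begin{pmatrix}\lambda_1\\ \lambda_2\end{pmatrix}=p\,\overline p\begin{pmatrix}\mathcal X(a)\\ \mathcal Y(a)\end{pmatrix}.
\]
By Lemma~\ref{lem:gram} this system is uniquely solvable, and Cramer's rule yields \eqref{eq:lambda1-formula}--\eqref{eq:lambda2-formula}. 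The endpoint values then follow by substitution. At $a=\overline a$ we have $\mathcal X=\mathcal A$ and $\mathcal Y=\mathcal C$, so the numerators are $\mathcal A\mathcal B-\mathcal C^2$ and $0$. At $a=\underline a$ we have $\mathcal X=\mathcal C$ and $\mathcal Y=\mathcal B$, so the numerators are $0$ and $\mathcal A\mathcal B-\mathcal C^2$.

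\emph{Step 2: non-negativity (main obstacle).} Since the functional is strictly convex, the unconstrained minimizer is the constrained one as soon as both numerators are $\ge 0$.

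For the first coefficient, write $\mathcal B\mathcal X(a)-\mathcal C\mathcal Y(a)=\langle\varphi_a,g\rangle$ with $g:=\mathcal B\varphi_{\overline a}-\mathcal C\varphi_{\underline a}$. Note that $\langle\varphi_{\underline a},g\rangle=\mathcal B\mathcal C-\mathcal C\mathcal B=0$. The function $g$ is even. It is positive on $\underline a<|r|<\overline a$. On $[0,\underline a)$ we have $g/\varphi_{\underline a}=\mathcal B\varphi_{\overline a}/\varphi_{\underline a}-\mathcal C$, which is increasing in $r$. Hence $g$ has a single sign change, from negative to positive, at some $r_0\in(0,\underline a)$; such an $r_0$ must exist because $\langle\varphi_{\underline a},g\rangle=0$. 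Now set $h_a:=\varphi_a/\varphi_{\underline a}$ on $[0,\underline a)$, which is non-decreasing for $a\ge\underline a$. We then write
\[
\langle\varphi_a,g\rangle=2\!\int_0^{\underline a}\!h_a\,\varphi_{\underline a}\,g\,dr+2\!\int_{\underline a}^{a}\!\varphi_a g\,dr .
\]
The second term is $\ge0$. For the first, the Chebyshev-type argument gives $\int_0^{\underline a}h_a\varphi_{\underline a}g\,dr\ge h_a(r_0)\int_0^{\underline a}\varphi_{\underline a}g\,dr=0$.

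For the second coefficient we argue symmetrically. Write $\mathcal A\mathcal Y(a)-\mathcal C\mathcal X(a)=\langle\varphi_a,k\rangle$ with $k:=\mathcal A\varphi_{\underline a}-\mathcal C\varphi_{\overline a}$, and note that $\langle\varphi_{\overline a},k\rangle=0$. On $[0,\overline a)$, $k/\varphi_{\overline a}$ is decreasing, and $k<0$ for $|r|>\underline a$. So $k$ changes sign once, from positive to negative. The ratio $\varphi_a/\varphi_{\overline a}$, extended by $0$ beyond $a$, is non-increasing on $[0,\overline a)$. The same rearrangement inequality then gives $\langle\varphi_a,k\rangle\ge0$.

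The delicate points I expect are the monotonicity of the ratios $\varphi_u/\varphi_v$. These follow from $\frac{d}{dr}\log\frac{1-r^2/u^2}{1-r^2/v^2}$ having a fixed sign. The single-crossing argument must also be written carefully, splitting the integral at $r_0$ (respectively $r_1$) and bounding the ratio by its value there on each side.
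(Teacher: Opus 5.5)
Your proof is correct. Step~1 is the paper's argument: normal equations, invertibility from Lemma~\ref{lem:gram}, explicit inversion, and substitution at the endpoints. The routes differ in the non-negativity check.

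The paper disposes of non-negativity in one line, citing the strict monotonicity of $\mathcal{X}$ and $\mathcal{Y}$ shown in the next subsection. Taken literally that is not enough, because a difference such as $\mathcal{B}\mathcal{X}(a)-\mathcal{C}\mathcal{Y}(a)$ of increasing functions need not keep a sign. What is actually needed is the strict monotonicity of $\mathcal{X}/\mathcal{Y}$, i.e.\ $\mathcal{X}'\mathcal{Y}-\mathcal{X}\mathcal{Y}'>0$. The paper proves this only inside Theorem~\ref{thm:ratio-increasing}, via Lemma~\ref{lem:derivatives} and the single-crossing Lemma~\ref{lem:stochastic-order}. Since the two numerators vanish at $a=\underline{a}$ and $a=\overline{a}$ respectively, writing them as $\mathcal{Y}(\mathcal{B}\mathcal{X}/\mathcal{Y}-\mathcal{C})$ and $\mathcal{X}(\mathcal{A}\mathcal{Y}/\mathcal{X}-\mathcal{C})$ then gives the signs. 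That theorem is also phrased through $\lambda_1/\lambda_2$, i.e.\ through the present proposition, although the underlying inequality is independent of it.

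Your argument avoids this forward dependence. You test $\varphi_a$ against fixed kernels $g$ and $k$, each orthogonal to one basis profile and changing sign once. You then use that $\varphi_a/\varphi_{\underline{a}}$ (resp.\ $\varphi_a/\varphi_{\overline{a}}$) is monotone in $r$, so each $a$ is compared directly with the relevant endpoint.

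Both proofs are monotone-likelihood-ratio arguments. Yours works on the profiles themselves, with no differentiation in $a$, and gives exactly the non-negativity required, so the proposition is self-contained. The paper's infinitesimal version costs more: differentiation under the integral and the weight $\eta_a$. In return it yields strict monotonicity of $\lambda_1/\lambda_2$, which is reused for identifiability.
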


\begin{proof}
The optimality conditions for problem \eqref{eq:minimisation} (ignoring the
non-negativity constraints, which will be verified a posteriori) read:
\[
\mathbf{G}\begin{pmatrix}\lambda_1\\ \lambda_2\end{pmatrix}
=
\begin{pmatrix}
\langle\lambda_{a,p},\psi_1\rangle_{L^2}\\
\langle\lambda_{a,p},\psi_2\rangle_{L^2}
\end{pmatrix}.
\]
By the separability $\lambda_{a,p}=p\,\varphi_a$ and $\psi_i=\overline{p}\,\varphi_{\cdot}$,
the right-hand sides are
\[
\langle\lambda_{a,p},\psi_1\rangle_{L^2}=p\overline{p}\,\mathcal{X}(a),
\qquad
\langle\lambda_{a,p},\psi_2\rangle_{L^2}=p\overline{p}\,\mathcal{Y}(a).
\]
The system becomes
\[
\overline{p}^2
\begin{pmatrix}\mathcal{A} & \mathcal{C}\\ \mathcal{C} & \mathcal{B}\end{pmatrix}
\begin{pmatrix}\lambda_1\\ \lambda_2\end{pmatrix}
=
p\overline{p}
\begin{pmatrix}\mathcal{X}(a)\\ \mathcal{Y}(a)\end{pmatrix}.
\]
By Lemma~\ref{lem:gram} the matrix is invertible; dividing by $\overline{p}^2$
and inverting gives
\[
\begin{pmatrix}\lambda_1\\ \lambda_2\end{pmatrix}
=
\frac{p/\overline{p}}{\mathcal{A}\mathcal{B}-\mathcal{C}^2}
\begin{pmatrix}
\mathcal{B} & -\mathcal{C}\\
-\mathcal{C} & \mathcal{A}
\end{pmatrix}
\begin{pmatrix}\mathcal{X}(a)\\ \mathcal{Y}(a)\end{pmatrix},
\]
which yields \eqref{eq:lambda1-formula} and \eqref{eq:lambda2-formula}.

\medskip\noindent
\emph{Endpoint cases.}
For $a=\overline{a}$: $\mathcal{X}(\overline{a})=J(\overline{a},\overline{a})=\mathcal{A}$
and $\mathcal{Y}(\overline{a})=J(\underline{a},\overline{a})=\mathcal{C}$, so
\[
\lambda_1(\overline{a},p)
=\frac{p}{\overline{p}}\frac{\mathcal{B}\mathcal{A}-\mathcal{C}^2}
                            {\mathcal{A}\mathcal{B}-\mathcal{C}^2}
=\frac{p}{\overline{p}},
\qquad
\lambda_2(\overline{a},p)
=\frac{p}{\overline{p}}\frac{\mathcal{A}\mathcal{C}-\mathcal{C}\mathcal{A}}
                            {\mathcal{A}\mathcal{B}-\mathcal{C}^2}=0.
\]
For $a=\underline{a}$: $\mathcal{X}(\underline{a})=J(\underline{a},\overline{a})=\mathcal{C}$
and $\mathcal{Y}(\underline{a})=J(\underline{a},\underline{a})=\mathcal{B}$, so
\[
\lambda_1(\underline{a},p)
=\frac{p}{\overline{p}}\frac{\mathcal{B}\mathcal{C}-\mathcal{C}\mathcal{B}}
                            {\mathcal{A}\mathcal{B}-\mathcal{C}^2}=0,
\qquad
\lambda_2(\underline{a},p)
=\frac{p}{\overline{p}}\frac{\mathcal{A}\mathcal{B}-\mathcal{C}^2}
                            {\mathcal{A}\mathcal{B}-\mathcal{C}^2}=\frac{p}{\overline{p}}.
\]
In particular, for $p=\overline{p}$ we recover $\lambda_1(\overline{a},\overline{p})=1$
and $\lambda_2(\underline{a},\overline{p})=1$, consistent with the greedy
selections $\psi_1=\lambda_{\overline{a},\overline{p}}$ and
$\psi_2=\lambda_{\underline{a},\overline{p}}$.

\medskip\noindent
Finally, since $\mathcal{X}$ and $\mathcal{Y}$ are strictly increasing (established
in the next section), formulas \eqref{eq:lambda1-formula}--\eqref{eq:lambda2-formula}
yield non-negative coefficients for all $a\in[\underline{a},\overline{a}]$; hence
the minimizer of the constrained problem over $\mathbb{R}_+^2$ coincides with the
unconstrained solution above.
\qed
\end{proof}

\begin{remark}
Formulas \eqref{eq:lambda1-formula}--\eqref{eq:lambda2-formula} are fully explicit
once the elliptic-integral expressions of
$\mathcal{X}(a)=J(a,\overline{a})$, $\mathcal{Y}(a)=J(\underline{a},a)$,
and $\mathcal{C}=J(\underline{a},\overline{a})$ from
Proposition~\ref{prop:J-integral} are substituted. 
\end{remark}
\subsubsection{Strict monotonicity of the ratio $\lambda_1/\lambda_2$}

We set, for $a\in(\underline{a},\overline{a})$,
\[
R(a):=\frac{\lambda_1(a,p)}{\lambda_2(a,p)}
=
\frac{\mathcal{B}\,\mathcal{X}(a)-\mathcal{C}\,\mathcal{Y}(a)}
     {\mathcal{A}\,\mathcal{Y}(a)-\mathcal{C}\,\mathcal{X}(a)}.
\]
This ratio is independent of $p$.

\begin{lemma}\label{lem:derivatives}
For every $a\in(\underline{a},\overline{a})$,
\begin{equation}\label{eq:Xprime}
\mathcal{X}'(a)=2\int_0^a \varphi_{\overline{a}}(r)\,\partial_a\varphi_a(r)\,dr,
\end{equation}
\begin{equation}\label{eq:Yprime}
\mathcal{Y}'(a)=2\int_0^{\underline{a}} \varphi_{\underline{a}}(r)\,\partial_a\varphi_a(r)\,dr,
\end{equation}
where, for $0\le r<a$,
\begin{equation}\label{eq:phi-derivative}
\partial_a\varphi_a(r)
=
\varphi_a(r)\,\eta_a(r),
\qquad
\eta_a(r):=\frac{r^2}{a(a^2-r^2)}.
\end{equation}
Moreover, the function $r\mapsto\eta_a(r)$ is strictly increasing on $[0,a)$.
\end{lemma}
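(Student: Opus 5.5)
Since every quantity involved is an integral of an explicit profile over an interval, I will prove the lemma by differentiating under the integral sign, using Leibniz's rule for the moving endpoints, and then verify the pointwise formula and the monotonicity by direct computation. I first use that all the functions involved are even in $r$, which gives $\mathcal{X}(a)=2\int_0^{a}\varphi_a\varphi_{\overline{a}}\,dr$ (the support of $\varphi_a$ is $[-a,a]\subset[-\overline{a},\overline{a}]$). Similarly $\mathcal{Y}(a)=J(\underline{a},a)=2\int_0^{\underline{a}}\varphi_{\underline{a}}\varphi_a\,dr$, since $\underline{a}\le a$ and $\varphi_{\underline{a}}$ is supported in $[-\underline{a},\underline{a}]$.

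For $\mathcal{Y}$ the domain of integration $[0,\underline{a}]$ does not depend on $a$. For $a>\underline{a}$ the integrand is smooth in $a$ uniformly on $[0,\underline{a}]$, because $a^2-r^2\ge a^2-\underline{a}^2>0$. Differentiation under the integral is therefore justified by dominated convergence on compact subintervals of $(\underline{a},\overline{a})$, and \eqref{eq:Yprime} follows at once.

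For $\mathcal{X}$, Leibniz's rule gives an extra boundary term $2\varphi_a(a)\varphi_{\overline{a}}(a)$, which vanishes because $\varphi_a(a)=0$. The delicate point is that $\partial_a\varphi_a(r)$ blows up like $(a-r)^{-1/2}$ as $r\to a^-$. I will handle this by writing $\mathcal{X}(a)=2\int_0^1 a\,\sqrt{1-s^2}\,\varphi_{\overline{a}}(as)\,ds$ after the substitution $r=as$. The dependence on $a$ then sits only in a smooth factor on a fixed domain, so it can be differentiated freely. Changing variables back, the $r$-integral I recover is absolutely convergent, since $(a-r)^{-1/2}$ is integrable, and it is exactly \eqref{eq:Xprime}. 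This justification of the singular endpoint is the main technical point. An alternative is to integrate over $[0,a-\varepsilon]$ and let $\varepsilon\to0$, using integrability of the singular term.

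The formula \eqref{eq:phi-derivative} is a direct computation for $0\le r<a$:
\[
\partial_a\sqrt{1-r^2/a^2}=\frac{r^2/a^3}{\sqrt{1-r^2/a^2}}=\varphi_a(r)\,\frac{r^2}{a^3(1-r^2/a^2)}=\varphi_a(r)\,\frac{r^2}{a(a^2-r^2)}.
\]
Finally, on $[0,a)$ the numerator $r^2$ is strictly increasing and nonnegative, while the denominator $a(a^2-r^2)$ is positive and strictly decreasing. Hence $\eta_a$ is strictly increasing; one can also see this from $\eta_a'(r)=2ra/(a(a^2-r^2))^2\cdot a>0$ for $r>0$.
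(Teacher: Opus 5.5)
Your overall structure matches the paper's proof. You use evenness to write $\mathcal{X}(a)=2\int_0^a\varphi_{\overline{a}}\varphi_a\,dr$ and $\mathcal{Y}(a)=2\int_0^{\underline{a}}\varphi_{\underline{a}}\varphi_a\,dr$. You apply Leibniz's rule with the boundary term killed by $\varphi_a(a)=0$, use a fixed domain for $\mathcal{Y}$, and compute $\partial_a\varphi_a$ and the monotonicity of $\eta_a$ directly.

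The paper simply differentiates under the integral sign and does not discuss the $(a-r)^{-1/2}$ singularity, so you are right to flag it. However, the justification you give for that step, which you call the main technical point, has a false claim. Differentiate $2\int_0^1 a\sqrt{1-s^2}\,\varphi_{\overline{a}}(as)\,ds$ and change variables back. You get
\[
\mathcal{X}'(a)=\frac{2}{a}\int_0^a \varphi_a(r)\bigl(\varphi_{\overline{a}}(r)+r\,\varphi_{\overline{a}}'(r)\bigr)\,dr .
\]
This integrand is bounded and has no singularity. It is not literally \eqref{eq:Xprime}, because the $a$-derivative has been moved off $\varphi_a$ and onto $\varphi_{\overline{a}}$. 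To identify the two, you need the scaling identity $\partial_a\varphi_a(r)=-\frac{r}{a}\,\partial_r\varphi_a(r)$ for $0\le r<a$. You then integrate $-\frac{2}{a}\int_0^a r\,\varphi_{\overline{a}}(r)\,\partial_r\varphi_a(r)\,dr$ by parts. This is legitimate because $\partial_r\varphi_a\in L^1(0,a)$. The boundary terms vanish because of the factor $r$ at $0$ and because $\varphi_a(a)=0$. With this step added, your substitution argument is a complete and clean proof, more rigorous than the paper's.

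Your fallback of truncating at $a-\varepsilon$ also works, but you should say why the limit commutes with differentiation. The truncated derivative is $2\varphi_{\overline{a}}(a-\varepsilon)\varphi_a(a-\varepsilon)+2\int_0^{a-\varepsilon}\varphi_{\overline{a}}\,\partial_a\varphi_a\,dr$. The boundary value $\varphi_a(a-\varepsilon)=\sqrt{2a\varepsilon-\varepsilon^2}/a$ is $O(\sqrt{\varepsilon})$. The tail $\int_{a-\varepsilon}^a\varphi_{\overline{a}}\,\partial_a\varphi_a\,dr\le\int_{a-\varepsilon}^a(a^2-r^2)^{-1/2}\,dr$ is also $O(\sqrt{\varepsilon})$. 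Both bounds hold uniformly for $a$ in compact subsets of $(\underline{a},\overline{a})$, so the derivatives converge locally uniformly.

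A minor slip: $\eta_a'(r)=\frac{2ar}{(a^2-r^2)^2}$, while your expression simplifies to $\frac{2r}{(a^2-r^2)^2}$. The sign is unaffected, and so is your main monotonicity argument (a nonnegative increasing numerator over a positive decreasing denominator).
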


\begin{proof}
For $0\le r<a$, a direct computation gives
\[
\partial_a\varphi_a(r)
=\frac{1}{2\sqrt{1-r^2/a^2}}\cdot\frac{2r^2}{a^3}
=\frac{r^2}{a^3\sqrt{1-r^2/a^2}}
=\varphi_a(r)\,\frac{r^2}{a(a^2-r^2)},
\]
establishing \eqref{eq:phi-derivative}. Formula \eqref{eq:Xprime} follows by
differentiating $\mathcal{X}(a)=2\int_0^a\varphi_{\overline{a}}(r)\varphi_a(r)\,dr$
under the integral sign (the boundary term vanishes since $\varphi_a(a)=0$).
For $\mathcal{Y}(a)=2\int_0^{\underline{a}}\varphi_{\underline{a}}(r)\varphi_a(r)\,dr$,
the upper bound $\underline{a}$ is fixed, so direct differentiation gives
\eqref{eq:Yprime}. Finally,
\[
\eta_a'(r)=\frac{2ar}{(a^2-r^2)^2}>0\qquad(0<r<a),
\]
hence $\eta_a$ is strictly increasing on $[0,a)$. 
\qed
\end{proof}

\begin{lemma}\label{lem:stochastic-order}
For every $a\in(\underline{a},\overline{a})$, define the probability densities
\[
p_a(r):=\frac{2\,\varphi_{\overline{a}}(r)\,\varphi_a(r)\,\mathbb{I}_{[0,a]}(r)}
             {\mathcal{X}(a)},
\qquad
q_a(r):=\frac{2\,\varphi_{\underline{a}}(r)\,\varphi_a(r)\,\mathbb{I}_{[0,\underline{a}]}(r)}
             {\mathcal{Y}(a)}.
\]
Then there exists a unique $c\in(0,\underline{a})$ such that
\[
p_a(r)-q_a(r)<0\quad\text{for }0<r<c,
\qquad
p_a(r)-q_a(r)>0\quad\text{for }c<r<a.
\]
In particular, for every strictly increasing function $\phi:[0,a]\to\mathbb{R}$,
\begin{equation}\label{eq:stochastic-order}
\int_0^{a}\phi(r)\,p_a(r)\,dr
>
\int_0^{a}\phi(r)\,q_a(r)\,dr.
\end{equation}
\end{lemma}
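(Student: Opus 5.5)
The plan is to compare $p_a$ and $q_a$ through their ratio on $(0,\underline{a})$, exploit that both are probability densities, and then derive \eqref{eq:stochastic-order} by a standard single-crossing argument. First, both functions are non-negative and integrate to one on $[0,a]$. This follows from $\mathcal{X}(a)=2\int_0^a\varphi_{\overline{a}}\varphi_a\,dr$ and $\mathcal{Y}(a)=2\int_0^{\underline{a}}\varphi_{\underline{a}}\varphi_a\,dr$, which are the identities used in the proof of Lemma~\ref{lem:derivatives}; by Cauchy--Schwarz-free positivity, both $\mathcal{X}(a)$ and $\mathcal{Y}(a)$ are strictly positive. On $[\underline{a},a)$ we have $q_a=0$ while $p_a>0$, because $\varphi_{\overline{a}}>0$ and $\varphi_a>0$ there. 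Hence $p_a-q_a>0$ on $[\underline{a},a)$.

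On $(0,\underline{a})$ both densities are positive, and I will write
\[
\frac{p_a(r)}{q_a(r)}=\frac{\mathcal{Y}(a)}{\mathcal{X}(a)}\,\sqrt{h(r^2)},\qquad h(s):=\frac{1-s/\overline{a}^2}{1-s/\underline{a}^2}.
\]
A one-line computation shows that the sign of $h'(s)$ equals the sign of $\frac{1}{\underline{a}^2}-\frac{1}{\overline{a}^2}>0$. So the ratio is strictly increasing on $[0,\underline{a})$, starts at $\mathcal{Y}(a)/\mathcal{X}(a)$ at $r=0$, and tends to $+\infty$ as $r\to\underline{a}^-$.

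The key step, and the only non-mechanical one, is to show that the ratio is $<1$ at $r=0$, i.e. $\mathcal{Y}(a)<\mathcal{X}(a)$. I will argue by contradiction using the normalization. If $\mathcal{Y}(a)\ge\mathcal{X}(a)$, then by monotonicity $p_a>q_a$ on $(0,\underline{a})$, and we already have $p_a>q_a$ on $[\underline{a},a)$. Then $\int_0^a(p_a-q_a)\,dr>0$, contradicting $\int p_a=\int q_a=1$. Given this, the intermediate value theorem together with strict monotonicity yields a unique $c\in(0,\underline{a})$ where the ratio equals $1$. The ratio is below $1$ on $(0,c)$ and above $1$ on $(c,\underline{a})$; combined with the region $[\underline{a},a)$, this gives the claimed sign pattern.

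For \eqref{eq:stochastic-order}, let $\phi$ be strictly increasing on $[0,a]$. It is bounded, hence integrable against both densities. Since $\int_0^a(p_a-q_a)\,dr=0$, we have
\[
\int_0^a\phi\,(p_a-q_a)\,dr=\int_0^a\bigl(\phi(r)-\phi(c)\bigr)\bigl(p_a(r)-q_a(r)\bigr)\,dr.
\]
By the sign pattern, the integrand is a product of two factors with the same sign: both negative on $(0,c)$ and both positive on $(c,a)$. It is strictly positive except at $r=c$, so the integral is strictly positive, which proves the inequality.
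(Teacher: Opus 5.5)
Your proof is correct and follows the same route as the paper. Both arguments use the strictly increasing ratio $p_a/q_a=\frac{\mathcal{Y}(a)}{\mathcal{X}(a)}\,\varphi_{\overline{a}}/\varphi_{\underline{a}}$ on $(0,\underline{a})$, the positivity of $p_a-q_a$ on $(\underline{a},a)$, the zero-mass condition to force a single crossing, and then the $\phi-\phi(c)$ trick. You also make explicit two points that the paper's ``Hence'' skips: that the ratio is below $1$ at $r=0$ (by your normalization argument, or directly from $\varphi_{\overline{a}}\ge\varphi_{\underline{a}}$), and that it blows up as $r\to\underline{a}^-$, which is what places $c$ strictly inside $(0,\underline{a})$.
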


\begin{proof}
On $(0,\underline{a})$, the density ratio is
\[
\frac{p_a(r)}{q_a(r)}
=
\frac{\mathcal{Y}(a)}{\mathcal{X}(a)}\,h(r),
\qquad
h(r):=\frac{\varphi_{\overline{a}}(r)}{\varphi_{\underline{a}}(r)}
=\sqrt{\frac{1-r^2/\overline{a}^2}{1-r^2/\underline{a}^2}}.
\]
For $r\in(0,\underline{a})$,
\[
\frac{d}{dr}\log h(r)
=\frac{r}{\underline{a}^2-r^2}-\frac{r}{\overline{a}^2-r^2}
=\frac{r(\overline{a}^2-\underline{a}^2)}{(\underline{a}^2-r^2)(\overline{a}^2-r^2)}>0,
\]
so $h$, and with it the ratio $p_a/q_a$, is strictly increasing on
$(0,\underline{a})$. Since $p_a$ and $q_a$ are probability densities,
$d_a:=p_a-q_a$ integrates to zero. Moreover, on $(\underline{a},a)$ we have
$q_a=0$ and $p_a>0$, so $d_a>0$ there. Hence there exists a unique
$c\in(0,\underline{a})$ such that $d_a<0$ on $(0,c)$ and $d_a>0$ on $(c,a)$.

Let $\phi$ be strictly increasing. Since $\int_0^{a}d_a\,dr=0$,
\[
\int_0^{a}\phi(r)\,d_a(r)\,dr
=\int_0^{a}\bigl(\phi(r)-\phi(c)\bigr)\,d_a(r)\,dr.
\]
On $(0,c)$, $\phi-\phi(c)<0$ and $d_a<0$; on $(c,a)$, $\phi-\phi(c)>0$ and
$d_a>0$. The integrand is therefore non-negative everywhere and strictly
positive on a set of positive measure, which yields \eqref{eq:stochastic-order}.
\qed
\end{proof}

\begin{theorem}\label{thm:ratio-increasing}
The function
\[
a\longmapsto R(a)=\frac{\lambda_1(a,p)}{\lambda_2(a,p)}
\]
is strictly increasing on $(\underline{a},\overline{a})$.
\end{theorem}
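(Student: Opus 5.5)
The plan is to reduce the monotonicity of $R$ to the monotonicity of the single ratio $\mathcal{X}(a)/\mathcal{Y}(a)$. That in turn comes from a comparison of logarithmic derivatives, which is exactly what Lemma~\ref{lem:derivatives} and Lemma~\ref{lem:stochastic-order} are designed to provide.

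\emph{Step 1 (log-derivative comparison).} By Lemma~\ref{lem:derivatives}, for $a\in(\underline{a},\overline{a})$,
\[
\frac{\mathcal{X}'(a)}{\mathcal{X}(a)}=\int_0^a \eta_a(r)\,p_a(r)\,dr,
\qquad
\frac{\mathcal{Y}'(a)}{\mathcal{Y}(a)}=\int_0^a \eta_a(r)\,q_a(r)\,dr,
\]
with $p_a,q_a$ the densities of Lemma~\ref{lem:stochastic-order}. Since $\eta_a$ is strictly increasing on $[0,a)$, inequality \eqref{eq:stochastic-order} with $\phi=\eta_a$ gives $\mathcal{X}'/\mathcal{X}>\mathcal{Y}'/\mathcal{Y}$. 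Hence $W(a):=\mathcal{X}(a)/\mathcal{Y}(a)$ is strictly increasing on $[\underline{a},\overline{a}]$, using continuity at the endpoints.

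The one technical point is that $\eta_a$ blows up as $r\to a$, so it is not a genuine function on the closed interval $[0,a]$. Two facts handle this. First, $\varphi_a\eta_a=r^2/(a^3\sqrt{1-r^2/a^2})$ is integrable, so both integrals are finite. Second, the proof of Lemma~\ref{lem:stochastic-order} only uses that $\int_0^a(\phi-\phi(c))\,d_a\,dr$ has a nonnegative integrand, and this still holds for integrable increasing $\phi$. If one prefers, the same conclusion follows by truncating $\eta_a$ at a level $M$ and applying monotone convergence as $M\to\infty$. Justifying differentiation under the integral sign relies on the same integrability and is covered by Lemma~\ref{lem:derivatives}.

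\emph{Step 2 (signs of numerator and denominator).} Write
\[
\mathcal{A}\mathcal{Y}-\mathcal{C}\mathcal{X}=\mathcal{Y}\,(\mathcal{A}-\mathcal{C}W)
\quad\text{and}\quad
\mathcal{B}\mathcal{X}-\mathcal{C}\mathcal{Y}=\mathcal{Y}\,(\mathcal{B}W-\mathcal{C}).
\]
From the endpoint values in the proof of Proposition~\ref{prop:lambdas-explicit} we have $W(\overline{a})=\mathcal{A}/\mathcal{C}$ and $W(\underline{a})=\mathcal{C}/\mathcal{B}$. Strict monotonicity of $W$ then gives $\mathcal{C}/\mathcal{B}<W(a)<\mathcal{A}/\mathcal{C}$ on the open interval. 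Since $\mathcal{Y},\mathcal{C}>0$, both factors are strictly positive there. So $R$ is well defined and positive, and in passing this also supplies the non-negativity of $\lambda_1,\lambda_2$ used in Proposition~\ref{prop:lambdas-explicit}.

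\emph{Step 3 (derivative of $R$).} Differentiating the quotient, the cross terms cancel:
\[
R'(a)=\frac{(\mathcal{A}\mathcal{B}-\mathcal{C}^2)\,\bigl(\mathcal{X}'\mathcal{Y}-\mathcal{X}\mathcal{Y}'\bigr)}{(\mathcal{A}\mathcal{Y}-\mathcal{C}\mathcal{X})^2}.
\]
Here $\mathcal{A}\mathcal{B}-\mathcal{C}^2>0$ by Lemma~\ref{lem:gram}, and $\mathcal{X}'\mathcal{Y}-\mathcal{X}\mathcal{Y}'=\mathcal{X}\mathcal{Y}\,(\log W)'>0$ by Step~1. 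Therefore $R'>0$ on $(\underline{a},\overline{a})$. Equivalently, $R=(\mathcal{B}W-\mathcal{C})/(\mathcal{A}-\mathcal{C}W)$ is a Möbius map of $W$ with positive determinant, applied to the increasing function $W$.

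I expect the main obstacle to be the rigorous justification in Step~1: applying Lemma~\ref{lem:stochastic-order} with the unbounded weight $\eta_a$, and differentiating under the integral near $r=a$. The algebra in Steps~2 and~3 is routine.
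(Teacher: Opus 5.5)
Your proof is correct and follows the paper's route: the quotient-rule identity $R'=(\mathcal{A}\mathcal{B}-\mathcal{C}^2)(\mathcal{X}'\mathcal{Y}-\mathcal{X}\mathcal{Y}')/(\mathcal{A}\mathcal{Y}-\mathcal{C}\mathcal{X})^2$ reduces everything to $\mathcal{X}'/\mathcal{X}>\mathcal{Y}'/\mathcal{Y}$, which you obtain, as the paper does, from Lemma~\ref{lem:derivatives} and Lemma~\ref{lem:stochastic-order} with $\phi=\eta_a$. Your additional checks supply points the paper leaves implicit and strengthen the argument without changing it: that the unbounded weight $\eta_a$ is harmless because $\varphi_a\eta_a$ is integrable, and that $\mathcal{A}\mathcal{Y}-\mathcal{C}\mathcal{X}>0$ and $\mathcal{B}\mathcal{X}-\mathcal{C}\mathcal{Y}>0$ because $\mathcal{X}/\mathcal{Y}$ increases strictly between its endpoint values $\mathcal{C}/\mathcal{B}$ and $\mathcal{A}/\mathcal{C}$ (the paper's non-negativity claim in Proposition~\ref{prop:lambdas-explicit} appeals only to the separate monotonicity of $\mathcal{X}$ and $\mathcal{Y}$, which does not suffice).
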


\begin{proof}
Differentiating
$R(a)=\dfrac{\mathcal{B}\,\mathcal{X}(a)-\mathcal{C}\,\mathcal{Y}(a)}
            {\mathcal{A}\,\mathcal{Y}(a)-\mathcal{C}\,\mathcal{X}(a)}$
gives
\[
R'(a)
=
\frac{(\mathcal{A}\mathcal{B}-\mathcal{C}^2)
      \bigl(\mathcal{X}'(a)\,\mathcal{Y}(a)-\mathcal{X}(a)\,\mathcal{Y}'(a)\bigr)}
     {\bigl(\mathcal{A}\,\mathcal{Y}(a)-\mathcal{C}\,\mathcal{X}(a)\bigr)^2}.
\]
Since $\mathcal{A}\mathcal{B}-\mathcal{C}^2>0$ by Lemma~\ref{lem:gram}, it
suffices to show
\[
\mathcal{X}'(a)\,\mathcal{Y}(a)-\mathcal{X}(a)\,\mathcal{Y}'(a)>0.
\]
By Lemma~\ref{lem:derivatives},
\[
\frac{\mathcal{X}'(a)}{\mathcal{X}(a)}
=\int_0^{a}\eta_a(r)\,p_a(r)\,dr,
\qquad
\frac{\mathcal{Y}'(a)}{\mathcal{Y}(a)}
=\int_0^{a}\eta_a(r)\,q_a(r)\,dr,
\]
where $\eta_a$ is strictly increasing on $[0,a)$. Lemma~\ref{lem:stochastic-order}
applied with $\phi=\eta_a$ gives
\[
\frac{\mathcal{X}'(a)}{\mathcal{X}(a)}>\frac{\mathcal{Y}'(a)}{\mathcal{Y}(a)}.
\]
Multiplying both sides by $\mathcal{X}(a)\,\mathcal{Y}(a)>0$, we conclude
$\mathcal{X}'(a)\,\mathcal{Y}(a)-\mathcal{X}(a)\,\mathcal{Y}'(a)>0$,
hence $R'(a)>0$ for all $a\in(\underline{a},\overline{a})$. 
\qed
\end{proof}

\begin{corollary}\label{cor:ratio-limits}
We have
\[
\lim_{a\downarrow\underline{a}}\frac{\lambda_1(a,p)}{\lambda_2(a,p)}=0,
\qquad
\lim_{a\uparrow\overline{a}}\frac{\lambda_1(a,p)}{\lambda_2(a,p)}=+\infty.
\]
\end{corollary}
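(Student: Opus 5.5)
The plan is to compute the two limits directly from the explicit formulas of Proposition~\ref{prop:lambdas-explicit}, using continuity of $\mathcal{X}$ and $\mathcal{Y}$ in $a$ together with the endpoint evaluations already obtained there. Both $\mathcal{X}(a)=J(a,\overline{a})$ and $\mathcal{Y}(a)=J(\underline{a},a)$ are continuous on $[\underline{a},\overline{a}]$: by \eqref{eq:J-def} they are integrals of bounded integrands over intervals depending continuously on $a$, so dominated convergence applies. Consequently the numerator $N(a):=\mathcal{B}\mathcal{X}(a)-\mathcal{C}\mathcal{Y}(a)$ and the denominator $D(a):=\mathcal{A}\mathcal{Y}(a)-\mathcal{C}\mathcal{X}(a)$ of $R(a)$ are continuous up to the endpoints.

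\emph{Limit at $\underline{a}$.} By the endpoint computation in Proposition~\ref{prop:lambdas-explicit}, $N(\underline{a})=\mathcal{B}\mathcal{C}-\mathcal{C}\mathcal{B}=0$, while $D(\underline{a})=\mathcal{A}\mathcal{B}-\mathcal{C}^2>0$ by Lemma~\ref{lem:gram}. Hence $R(a)=N(a)/D(a)\to 0/(\mathcal{A}\mathcal{B}-\mathcal{C}^2)=0$ as $a\downarrow\underline{a}$.

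\emph{Limit at $\overline{a}$.} Symmetrically, $N(\overline{a})=\mathcal{A}\mathcal{B}-\mathcal{C}^2>0$ and $D(\overline{a})=\mathcal{A}\mathcal{C}-\mathcal{C}\mathcal{A}=0$. So $|R(a)|\to\infty$, and the only real issue is the sign of $D(a)$ as $a\uparrow\overline{a}$. To settle it, I would use that $\lambda_2(a,p)=\frac{p}{\overline{p}}D(a)/(\mathcal{A}\mathcal{B}-\mathcal{C}^2)$ is nonnegative on the interval, as asserted at the end of the proof of Proposition~\ref{prop:lambdas-explicit}. One must also rule out $D(a)=0$ in the interior, since $R$ has to be defined there. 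For this I would appeal to Theorem~\ref{thm:ratio-increasing}: $R$ is defined and strictly increasing on $(\underline{a},\overline{a})$, starts from the limit $0$, and so is positive. Its numerator tends to the positive value $\mathcal{A}\mathcal{B}-\mathcal{C}^2$, which forces $D(a)>0$ near $\overline{a}$. Therefore $R(a)\to+\infty$.

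The main obstacle is this sign and nonvanishing of $D$ near $\overline{a}$. If one prefers not to rely on the a posteriori positivity claim, a cleaner route is the mean value theorem. Since $D(\overline{a})=0$, we have $D(a)=-D'(\xi)(\overline{a}-a)$ for some $\xi\in(a,\overline{a})$, so it suffices to show $D'<0$ near $\overline{a}$. Now $D'=\mathcal{A}\mathcal{Y}'-\mathcal{C}\mathcal{X}'$, and at $a=\overline{a}$ this reads $\mathcal{A}\mathcal{Y}'(\overline{a})-\mathcal{C}\mathcal{X}'(\overline{a})=\mathcal{X}(\overline{a})\mathcal{Y}'(\overline{a})-\mathcal{Y}(\overline{a})\mathcal{X}'(\overline{a})$, using $\mathcal{X}(\overline{a})=\mathcal{A}$ and $\mathcal{Y}(\overline{a})=\mathcal{C}$. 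This is negative by the inequality $\mathcal{X}'\mathcal{Y}-\mathcal{X}\mathcal{Y}'>0$ from the proof of Theorem~\ref{thm:ratio-increasing}, extended to $a=\overline{a}$ by continuity. That extension needs care, because $\mathcal{X}'$ involves the integrable singularity of $\eta_a$; if the continuity is delicate, I would fall back on the monotonicity argument above.
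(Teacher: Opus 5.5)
Your argument is essentially the paper's: you read off $N(\underline{a})=0<D(\underline{a})$ and $D(\overline{a})=0<N(\overline{a})$ from the endpoint cases of Proposition~\ref{prop:lambdas-explicit} and pass to the limit by continuity, and your extra attention to the sign of $D$ just below $\overline{a}$ makes explicit a point that the paper's one-line ``by continuity'' leaves implicit. For that sign you need neither the a posteriori claim $\lambda_2\ge 0$, nor Theorem~\ref{thm:ratio-increasing} (whose statement already presupposes $D\neq 0$), nor the endpoint mean-value argument (where continuity would only give a non-strict inequality at $\overline{a}$), because the inequality $\mathcal{X}'\mathcal{Y}-\mathcal{X}\mathcal{Y}'>0$, established in the proof of Theorem~\ref{thm:ratio-increasing} from Lemmas~\ref{lem:derivatives} and~\ref{lem:stochastic-order} alone, says that $\mathcal{Y}/\mathcal{X}$ is strictly decreasing, hence $D(a)=\mathcal{X}(a)\bigl(\mathcal{A}\,\mathcal{Y}(a)/\mathcal{X}(a)-\mathcal{C}\bigr)>\mathcal{X}(a)\bigl(\mathcal{A}\,\mathcal{C}/\mathcal{A}-\mathcal{C}\bigr)=0$ for every $a\in[\underline{a},\overline{a})$.
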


\begin{proof}
By Proposition~\ref{prop:lambdas-explicit},
$\lambda_2(\underline{a},p)=p/\overline{p}>0$ and $\lambda_1(\underline{a},p)=0$,
so $R(\underline{a})=0$. Similarly, $\lambda_1(\overline{a},p)=p/\overline{p}>0$
and $\lambda_2(\overline{a},p)=0$, so $R(\overline{a})=+\infty$. The stated
limits follow by continuity of $a\mapsto(\lambda_1,\lambda_2)$. 
\qed
\end{proof}

\subsubsection{Identifiability of the parameters $(a,p)$}

\begin{theorem}\label{thm:identification}
Knowledge of the pair $\bigl(\lambda_1(a,p),\lambda_2(a,p)\bigr)$ uniquely
determines $(a,p)\in[\underline{a},\overline{a}]\times(0,\infty)$.
\end{theorem}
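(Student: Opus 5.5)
The plan is to show that the map $(a,p)\mapsto(\lambda_1(a,p),\lambda_2(a,p))$ is injective on $[\underline{a},\overline{a}]\times(0,\infty)$. The proof has two stages: first recover $a$ from the ratio, which does not depend on $p$; then recover $p$ from the linear dependence of the coefficients on $p$. Throughout we rely on the explicit formulas \eqref{eq:lambda1-formula}--\eqref{eq:lambda2-formula} of Proposition~\ref{prop:lambdas-explicit}. These show that $\lambda_i(a,p)=\frac{p}{\overline{p}}\,\ell_i(a)$, where
\[
\ell_1(a):=\frac{\mathcal{B}\mathcal{X}(a)-\mathcal{C}\mathcal{Y}(a)}{\mathcal{A}\mathcal{B}-\mathcal{C}^2},
\qquad
\ell_2(a):=\frac{\mathcal{A}\mathcal{Y}(a)-\mathcal{C}\mathcal{X}(a)}{\mathcal{A}\mathcal{B}-\mathcal{C}^2},
\]
and both $\ell_1,\ell_2$ are independent of $p$ and nonnegative.

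\textbf{Step 1 (recovering $a$).} We split according to which coefficients vanish.

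If $\lambda_2=0$ and $\lambda_1>0$, we claim $a=\overline{a}$. To see this, note that the ratio $R(a)=\ell_1(a)/\ell_2(a)$ is finite and positive on the open interval. Indeed, $R$ is strictly increasing by Theorem~\ref{thm:ratio-increasing} and tends to $0$ at $\underline{a}$ by Corollary~\ref{cor:ratio-limits}, so $R(a)>0$. Hence $\ell_1>0$ on $(\underline{a},\overline{a})$. Since the ratio is finite there, $\ell_2>0$ on the open interval as well. Therefore $\ell_2$ vanishes only at $\overline{a}$ (by Proposition~\ref{prop:lambdas-explicit}) and $\ell_1$ vanishes only at $\underline{a}$.

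If $\lambda_1=0$ and $\lambda_2>0$, the same reasoning gives $a=\underline{a}$.

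If both coefficients are positive, then $a\in(\underline{a},\overline{a})$. By Theorem~\ref{thm:ratio-increasing}, $R$ is strictly increasing and hence injective on this interval, so the value $\lambda_1/\lambda_2=R(a)$ determines $a$ uniquely. Moreover, Corollary~\ref{cor:ratio-limits} together with continuity shows that $R$ maps $(\underline{a},\overline{a})$ onto $(0,\infty)$, so the three cases are mutually exclusive and exhaustive.

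Note that both coefficients cannot vanish simultaneously. This follows because $p>0$ and $(\ell_1(a),\ell_2(a))\neq(0,0)$ for every $a$, as just established.

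\textbf{Step 2 (recovering $p$).} Once $a$ is known, pick an index $i$ with $\ell_i(a)>0$; by Step 1 such an index exists. Then $p=\overline{p}\,\lambda_i/\ell_i(a)$. For instance, one can use the formula
\[
p=\overline{p}\,\frac{\lambda_1+\lambda_2}{\ell_1(a)+\ell_2(a)}.
\]

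\textbf{Main obstacle.} The delicate point is Step 1: we must make sure that $\ell_1$ and $\ell_2$ are strictly positive in the interior, and not merely that the ratio increases. One must rule out a possible interior zero of $\ell_2$ at which $R$ could blow up. My approach is to use continuity of $\ell_1,\ell_2$ (inherited from the continuity of $J$) together with the finiteness and monotonicity of $R$ on the open interval. As a cross-check, one can also argue directly from the formula for $\ell_1$ and the strict monotonicity of $\mathcal{X}$ and $\mathcal{Y}$ that was used at the end of the proof of Proposition~\ref{prop:lambdas-explicit}.
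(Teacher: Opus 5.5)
Your proof is correct and uses the same mechanism as the paper's. You factor $\lambda_i(a,p)=\frac{p}{\overline{p}}\ell_i(a)$, read $a$ off the $p$-independent ratio using Theorem~\ref{thm:ratio-increasing}, and then divide to get $p$.

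The difference is coverage. The paper argues only through the bijection $R:(\underline{a},\overline{a})\to(0,\infty)$ and recovers $p$ as $\overline{p}\lambda_1/L_1(a)$. That formula is undefined at $a=\underline{a}$, as is $\overline{p}\lambda_2/L_2(a)$ at $a=\overline{a}$, even though the statement is on the closed interval. Your case split on which coefficient vanishes, together with $p=\overline{p}(\lambda_1+\lambda_2)/(\ell_1(a)+\ell_2(a))$, closes this hole.

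In exchange you need $\ell_1$ to vanish only at $\underline{a}$ and $\ell_2$ only at $\overline{a}$, and your justification here is weaker than it looks. Deducing that $\ell_2$ has no interior zero from the ``finiteness'' of $R$ assumes exactly that. It is an unverified presupposition of Theorem~\ref{thm:ratio-increasing}, whose proof differentiates $R$ without checking $\mathcal{A}\mathcal{Y}-\mathcal{C}\mathcal{X}\neq 0$. Your cross-check also fails: strict monotonicity of $\mathcal{X}$ and $\mathcal{Y}$ separately gives no sign for $\mathcal{B}\mathcal{X}-\mathcal{C}\mathcal{Y}$. The last sentence of the proof of Proposition~\ref{prop:lambdas-explicit} has the same defect.

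A clean fix uses $t:=\mathcal{X}/\mathcal{Y}$:
\begin{itemize}
\item $\ell_1\propto\mathcal{Y}(\mathcal{B}t-\mathcal{C})$ and $\ell_2\propto\mathcal{Y}(\mathcal{A}-\mathcal{C}t)$, with positive constants;
\item $t(\underline{a})=\mathcal{C}/\mathcal{B}$ and $t(\overline{a})=\mathcal{A}/\mathcal{C}$;
\item $t$ is strictly increasing on $[\underline{a},\overline{a}]$, by the inequality $\mathcal{X}'\mathcal{Y}-\mathcal{X}\mathcal{Y}'>0$ proved inside Theorem~\ref{thm:ratio-increasing}.
\end{itemize}
Hence $\ell_1>0$ on $(\underline{a},\overline{a}]$ and $\ell_2>0$ on $[\underline{a},\overline{a})$. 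This also makes your case split unnecessary. The normal equations give $(\mathcal{A}\lambda_1+\mathcal{C}\lambda_2)/(\mathcal{C}\lambda_1+\mathcal{B}\lambda_2)=t(a)$, which identifies $a$ on the whole closed interval.
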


\begin{proof}
By Proposition~\ref{prop:lambdas-explicit} we can write
$\lambda_i(a,p)=\frac{p}{\overline{p}}\,L_i(a)$ with
\[
L_1(a):=\frac{\mathcal{B}\,\mathcal{X}(a)-\mathcal{C}\,\mathcal{Y}(a)}
             {\mathcal{A}\mathcal{B}-\mathcal{C}^2},
\qquad
L_2(a):=\frac{\mathcal{A}\,\mathcal{Y}(a)-\mathcal{C}\,\mathcal{X}(a)}
             {\mathcal{A}\mathcal{B}-\mathcal{C}^2}.
\]
The ratio $R(a)=L_1(a)/L_2(a)$ depends only on $a$.
Theorem~\ref{thm:ratio-increasing} and Corollary~\ref{cor:ratio-limits} show
that $a\mapsto R(a)$ is a strictly increasing bijection from
$(\underline{a},\overline{a})$ onto $(0,+\infty)$; hence $a$ is uniquely
determined by $\lambda_1/\lambda_2$. Once $a$ is known, $p$ is recovered by
\[
p=\overline{p}\,\frac{\lambda_1(a,p)}{L_1(a)}=\overline{p}\,\frac{\lambda_2(a,p)}{L_2(a)}.
\]
\qed
\end{proof}
\end{subequations}

\printbibliography  

\end{document}